\theoremstyle{plain}
\newtheorem{thm}{Theorem}[section]
\newtheorem{cor}[thm]{Corollary}
\newtheorem{defi}[thm]{Definition}
\newtheorem{Lem}[thm]{Lemma}
\newtheorem{prop}[thm]{Proposition}
\newtheorem{rem}[thm]{Remark}
\newcommand{\N}{\mathbb N}
\newcommand{\R}{\mathbb R}
\newcommand{\Z}{\mathbb Z}
\renewcommand{\P}{\mathbb P}
\newcommand{\E}{\mathbb{E}}
\renewcommand{\L}{\mathcal L}
\renewcommand{\l}{\mathscr L}
\newcommand{\C}{\mathscr{C}}
\renewcommand{\S}{\mathcal{S}}
\newcommand{\F}{\mathcal{F}}
\newcommand{\ind}{\mathbf{1}}
\newcommand{\dbc}[1]{[\![{#1}]\!]}
\begin{document}

\title{Existence condition of strong stationary times for continuous time Markov chains on discrete graphs
}

\author{Guillaume Copros
}

\date{July 2016}

\maketitle

\begin{abstract}
We consider a random walk on a discrete connected graph having some infinite branches plus finitely many vertices with finite degrees. We find the generator of a strong stationary dual in the sense of Fill, and use it to find some equivalent condition to the existence of a strong stationary time. This strong stationary dual process lies in the set of connected compact sets of the compactification of the graph. When this graph is $\Z$, this is simply the set of (possibly infinite) segments of $\Z$.

\end{abstract}
\paragraph{Keywords}: Strong stationary time, Strong stationary dual, Random walk, Discrete graph

 \paragraph{MSC2010}: 60J27, 60G40
\pagebreak
\section{Introduction}\label{intro}
	
	\subsection{Historical background}\label{sec-hist}
	Convergence of Markov processes to their stationary distribution has been much studied by analytical tools. For instance, classical theorems provide ways to find a deterministic time $t_0$ at which the distribution of a process $X$ will be arbitrarily close to its stationary distribution $\mu_{\infty}$, for the total variation distance.
	
	A more probabilistic approach is to look for a random time $T<\infty$ such that $X_T$ is distributed \emph{exactly} as its stationary distribution, independently of $T$.
	Such a $T$ is called a \emph{strong stationary time}, and will be defined more precisely in (\ref{def-sst}). This tool was first introduced and developed by Aldous and Diaconis \cite{aldous-diaco} for discrete time Markov chains with finite state space. In this framework, Diaconis and Fill \cite{Diaco} then introduced a method to construct strong stationary times by intertwining processes. This work was transposed to continuous time Markov chains by Fill \cite{fill} and \cite{fill-tts}.
	
	More recently, strong stationary times have been investigated, among others, by Lyzinski and Fill \cite{Lyzinski}, Miclo \cite{Miclo} Gong, Mao and Zhang \cite{Mao} in continuous time, and Lorek and Szekli \cite{Lorek} in discrete time.
	
	They have also been used by Diaconis and Saloff-Coste \cite{Saloff-Coste} to study cut-off phenomenon, Fill \cite{Fill-sampling} for perfect sampling or Fill and Kahn \cite{Kahn} for fastest mixing.
	
	A question that naturally arises is: does it always exist a finite strong stationary time? On a finite state space, Diaconis and Fill \cite{Diaco} proved that the answer is yes. Unfortunately, this is no longer true when the state space is infinite, even countable. Of course, the existence or not depends on the initial distribution: when it is the same as the stationary one, $0$ is an obvious strong stationary time! One can therefore wonder under which conditions there exists a finite one, for a given process, whatever the initial distribution. Thus, Miclo \cite{Miclo} produced some existence conditions in the case of a diffusion on $\R$. We aim here to give similar conditions for continuous time random walks on a connected graph where all vertices but a finite number have degree two.
	
	In Section \ref{sec-preliminaire} we adapt an important result from Fill \cite{fill} to our case, that we will apply in Section \ref{sec-marche-Z} to birth-death processes on $\Z$, and in Section \ref{sec-marche-graphe} to random walks on a graph.

	\subsection{Some reminders on Markov processes}\label{sec-markov}
	What follows recalls briefly the characterization of continuous time Markov chains in terms of generators and semi-groups. For more details, we refer to Norris \cite{norris}. The stochastic processes will be defined on a probability space $(\Omega,\mathcal{A},\P)$, implied in the sequel, which we assume to be "large enough". For a Markov process $(X_t)_{t\geq0}$ on $(\Omega,\mathcal{A},\P)$, we will denote by $X_{[0,t]}$ its trajectory until the time $t$ ($t\geq 0$), $(\mathcal{F}^X_t)_{t\geq0}$ the filtration generated by $X$ and $\mathcal{F}^X_{\infty}$ the $\sigma$-algebra generated by the union of $\mathcal{F}^X_{t}$. If $\tau$ is a $(\mathcal{F}^X_t)$-stopping time, we write:
			$$(\mathcal{F}^X_{\tau}):=\{A\in\mathcal{F}^X_{\infty}:\;A\cap\{\tau\leq t\}\in\mathcal{F}^X_t,\ \forall t\geq0\}$$
	
	A random time $T$ is called a \emph{randomized stopping time} relative to $X$ if there exists a sub $\sigma$-algebra $\mathcal{G}$ of $\mathcal{A}$, independent from $\F^X_{\infty}$, such that $T$ is a stopping time with respect to $(\sigma(\F^X_t,\mathcal{G}))_{t\geq0}$ (cf Fill \cite{fill-tts}, Section 2.2).
	
	Let $\S$ be a countable topological space (not necessarily discrete). We add to this space an isolated point $\Delta$, called \emph{cemetary point}, and we extend every real function $f$ on $\S$ by setting $f(\Delta)=0$. A \emph{stable and conservative Q-matrix} on $\S$ is a matrix $(\L_{x,y})_{(x,y)\in\S^2}$ satisfying:
		$$0\leq\L_{x,y}<+\infty, \quad \forall x\neq y\in\mathcal{S}$$
		$$\sum_{y\neq x}\L_{x,y}=-\L_{x,x}<+\infty, \quad \forall x\in\mathcal{S}$$
	For every Q-matrix $\L$, we will write $\L_x:=-\L_{x,x}$, and we enlarge $\L$ to $\S\cup\{\Delta\}$ by setting $\L_{x,\Delta}=\L_{\Delta,x}=\L_{\Delta,\Delta}=0$ for every $x\in\S$. Together with a probability measure $\mu_0$ on $\S$, a Q-matrix $\L$ defines a process $X$ with values in $\S\cup\{\Delta\}$, through its jump chain $(Y_n)_{n\in\N}$ and jumping times $(T_n)_{n\in\N}$:

		\begin{itemize}
		\item $(Y_n)_{n\in\N}$ is a discrete time Markov chain,  with initial distribution $\mu_0$ and transition matrix $\left((1-\delta_{x,y})\dfrac{\L_{x,y}}{\L_x}\right)_{(x,y)\in\S^2}$, where $\delta_{x,y}$ is the Kronecker delta, equal to $1$ if $x=y$ and $0$ otherwise.
		\item $T_0=0$, and for every $n\in\N^*$, given $Y_0,\dots,Y_n$, the random variables $T_1-T_0,\dots,T_n-T_{n-1}$ are independent and exponentially distributed with respective parameters $\L_{Y_0},\dots,\L_{Y_{n-1}}$.
		\item For all $i\in\N$ and all $t\in\left[T_i,T_{i+1}\right[$, we set $X_t=Y_i$.
		\item Define $T:=\underset{n\rightarrow\infty}{\lim}T_n$. If $T<\infty$, we set $X_t=\Delta$ for every $t\geq T$.
		\end{itemize}

	\noindent This process is Markovian, homogeneous, and its trajectories are right continuous with left limits (càdlàg). In addition, this construction uniquely determines the law of $X$. In the following, we will simply say that $\L$ is a generator on $\S$, and that $X$ is a \emph{minimal process} with generator $\L$ and initial distribution $\mu_0$, or only with generator $\L$ whenever the initial distribution does not matter. The time $T$ is called the \emph{explosion time} of $X$, and we will see in Section \ref{sec-explo} how in certain cases we can naturally construct a non-minimal process, that is, one that is not "killed" after the explosion time. In the case where $T=\infty$ almost surely for every initial distribution, the generator $\L$ (or the process $X$) is said to be nonexplosive, and explosive otherwise. It is positive recurrent (respectively irreducible, reversible) if the jump chain $(Y_n)_{n\in\N}$ is so, for every initial distribution.

	The law of such a process is also determined by its finite dimensional distributions, namely the set of
		$$\P(X_{t_1}=x_1,\cdots,X_{t_n}=x_n)$$
	with $n\in\N,\;0\leq t_1<\dots<t_n<\infty$ and $x_1,\dots,x_n\in\S$. The \emph{transition function} of $X$ is defined by:
		$$P_{x,y}(t)=\P(X_t=y\mid X_0=x)$$
	and is then (together with an initial distribution $\mu_0$) another way to characterize the law of $X$, through its finite dimensional distributions. A transition function defines a \emph{semi-group} $(P(t))_{t\geq0}$ of sub-stochastic matrices, which may be enlarged to $\S\cup\{\Delta\}$ to become stochastic.

	A semi-group $P(\cdot)$ is a \emph{solution of a Kolmogorov equation} associated to $\L$ if for every $t\geq 0$ one of the matrix equalities:

		\begin{gather}
		P'(t)=\L P(t)\quad\quad\text{(backward equation)}\\
		P'(t)=P(t)\L\quad\quad\text{(forward equation)}
		\end{gather}
	
	\noindent hold. It is the minimal solution of these equations if for every other solution $\tilde{P}(\cdot)$ we have $P_{x,y}(t)\leq \tilde{P}_{x,y}(t)$, for every $x,y\in\S$ and every $t\geq0$. We recall that the minimal solution exists (theorem 2.8.3 and 2.8.6 of \cite{norris}), and is necessarily unique by definition.

	Each of the three following conditions is implied by the other two:

		\begin{itemize}
		\item $X$ is a minimal process with generator $\L$
		\item $X$ is a minimal process with semi-group $P(\cdot)$
		\item $P(\cdot)$ is the minimal solution of the backward (respectively forward) equation associated to $\L$
		\end{itemize}
		
	Finally, although we will only use the characterization of processes through $Q$-matrices and transition functions, we also recall a useful consequence in the functional point of view. Let $\mathcal{B}(\S)$ be the set of real valued, bounded functions on $\mathcal{S}$. We see a function $f\in\mathcal{B}(\S)$ as a column vector indexed by $\S$, and in that case $P(\cdot)$ and $\L$ act on $\mathcal{B}(\S)$ by matrix multiplication.

	 If $X$ is a minimal process with generator $\L$, and $\tau_n,\;n\in\N$ are its jumping times, then for all $f\in\mathcal{B}(\S)$ and all $n\in\N$, the process:
		$$f(X_{t\wedge \tau_n})-f(X_0)-\int_{0}^{t\wedge \tau_n}\L[f](X_s)\mathrm{d}s$$
	is a martingale with respect to the filtration $\F^X$. This can be proved by induction on $n$, using the Markov property and the fact that:
		$$\L f(X_s)=\sum\limits_{y\ne X_{\tau_n}}\L_{X_{\tau_n},y}f(y),\quad\forall s\in\left]\tau_n,\tau_{n+1}\right[$$

	\subsection{Strong stationary times and duality}\label{sec-sst}
	We will now deal more specifically with the theory of strong stationary times, by recalling the main definitions and results of Fill \cite{fill-tts} that we will use here. For our purpose, these definitions are given in continuous time, but are easily transposable to the original discrete time case (see e.g. Diaconis and Fill \cite{Diaco}).

	Let $(X_t)_{t\geq0}$ be a Markov chain on a countable set $\S$, positive recurrent, irreducible and nonexplosive, $\mu_t$ its distribution at time $t\geq0$ and $\mu_{\infty}$ its stationary distribution. A \emph{strong stationary time} $T$ of $X$ is a randomized stopping time relative to $X$ satisfying:
		\begin{equation}
		\l(X_T\mid T)=\mu_\infty\quad\text{on the set }\{T<\infty\}\label{def-sst}
		\end{equation}
	(the notation $\l(\cdot)$ stands for the distribution). If in addition $T<+\infty$ almost surely, we say that $T$ is a \emph{finite} strong stationary time.
	
	This tool provides an upper bound for the \emph{separation} between $\mu_t$ and $\mu_{\infty}$, defined by:
		$$\mathfrak{s}(t):=\underset{m\in\S}{\sup}\left(1-\dfrac{\mu_t(m)}{\mu_{\infty}(m)}\right)$$
	since, for every $t\geq0$:
		$$\mathfrak{s}(t)\leq\P(T>t)$$
	When the above inequality is an equality, $T$ is called a \emph{time to stationarity}. We recall that the separation itself is an upper bound on total variation distance, so that strong stationary times can be used to study the rate of convergence of processes.
	
	In practice, to construct a strong stationary time, one will often use (and always in this article) the notion of \emph{strong stationary dual}, defined in continuous time by Fill \cite{fill}, Section 2.1. A process $(X^*_t)_{t\geq0}$, defined on the same probability space $(\Omega,\mathcal{A},\P)$ as $X$, with countable state space $\S^*$ and absorbing state $\infty$, is a strong stationary dual of $X$ if it satisfies, for all $t\geq0$:

		\begin{equation}
		\label{A1}\l(X^*_t\mid \F^X_{\infty})=\l(X^*_t\mid \F^X_{t})
		\end{equation}
		\begin{equation}
		\label{A2}\l(X_t\mid\F^{X^*}_t)=\mu_{\infty}\quad\text{on the set }\{X^*_t=\infty\}
		\end{equation}
		
	\noindent Relation (\ref{A1}) states that $X$ and $X^*$ are adapted to a same filtration. One can show that the time to absorption of a strong stationary dual is a strong stationary time for $X$. Conversely, from a strong stationary time, one can construct a strong stationary dual in a canonical way (see Fill \cite{fill}, Theorem 1). When the time to absorption of the dual is a time to stationarity, the dual is called \emph{sharp}.

	If $\Lambda$ is a transition kernel from $\mathcal{S}^*$ to $\mathcal{S}$, namely:
		$$\forall x^*\in\mathcal{S}^*,\quad \sum_{x\in\mathcal{S}}\Lambda(x^*,x)=\sum_{x\in\mathcal{S}}|\Lambda(x^*,x)|=1$$
	and $X^*$ is a Markov process with values in $\S^*$, we say that $X$ and $X^*$ are $\Lambda$-\emph{linked} or \emph{intertwined by} $\Lambda$ if:
		\begin{equation}\label{A3}
		\quad\quad\l(X_t\mid\F^{X^*}_t)=\Lambda(X^*_t,\cdot)
		\end{equation}
	Fill \cite{fill} showed that, in the framework of his \emph{General settings} (that is, $X$ and $X^*$ are nonexplosive, and the sets $\{x\in\S: \Lambda(x^*,x)>0\}$ are finite for all $x^*\in\S^*$, except maybe the absorbing state $\infty$), such a coupling always exists as soon as the generators $\L$ and $\L^*$ of $X$ and $X^*$ and their initial distribution $\mu_0$ and $\mu_0^*$ respectively are in \emph{algebraic duality}:

		\begin{equation*}
			\begin{cases}
			\mu_0^*\Lambda=\mu_0\\
			\L^*\Lambda=\Lambda\L
			\end{cases}
		\end{equation*}
	
	In Section \ref{sec-preliminaire}, we will extend this result to the framework of reversible random walks on countable graphs having finitely many vertices of degree strictly larger than 2.

	We consider a discrete graph $G$, simple, connected and undirected. We call \emph{infinite branches} of $G$ its maximal (for set inclusion) infinite connected subsets whose points have degree $2$. A more accurate definition is given in Section \ref{sec-marche-graphe}. We assume then that $G$ is formed by a finite number of infinite branches, plus possibly a finite number of points with finite degrees. If there are only infinite branches, then there is only one and $G=\Z$. This particular case is considered in Section \ref{sec-marche-Z}. Otherwise, each of the $N$ infinite branches (denoted by $Q_i,\;i\in I:=\dbc{1,N}$) is isomorphic as a graph to $\N$, and we denote $\varphi_i\colon\N\rightarrow G$ the corresponding isomorphisms. Such a graph is presented in Figure \ref{graphe}.
	
	Next, assume we are given a generator $\L$ on $G$, such that for all $x\neq y\in G,\ \L_{x,y}\neq0$ if and only if $x$ and $y$ are adjacent. Since the graph structure of $G$ will appear only through the generator, we can also choose to consider \emph{a priori} a suitable generator on a countable set, and to endow this latter with a suitable graph structure. That is what will be done in Section \ref{sec-marche-graphe}. We assume that $\L$ is positive recurrent and nonexplosive. This results in the following conditions:

		\begin{equation}
		\sum_{n\in\N}\mu^i(n)<\infty,\quad\quad\forall i\in\dbc{1,N}
		\end{equation}
		\begin{equation}
		\sum_{n\in\N}\dfrac{1}{\mu^i(n)\L_{\varphi_i(n),\varphi_i(n+1)}}\sum_{m=0}^{n}\mu^i(m)=\infty,\quad\quad\forall i\in\dbc{1,N}
		\end{equation}
	
	where:
		\begin{equation}\label{def-mu^i}
		\mu^i(n):=\prod_{k=0}^{n-1}\dfrac{\L_{\varphi_i(k),\varphi_i(k+1)}}{\L_{\varphi_i(k+1),\varphi_i(k)}}
		\end{equation}
	Under these assumptions, there exists a unique stationary distribution $\mu$, and every process $X$ with generator $\L$ converges weakly to $\mu$. We assume in addition that the diagonal of $\L$ is $\mu$-integrable, and that $\L$ is reversible, so that $\mu$ satisfies:
		
		\begin{equation}\label{eq-reversible}
		\forall p,q\in G,\quad \mu(p)\L_{p,q}=\mu(q)\L_{q,p}
		\end{equation}
	
	In this context, we want to study the strong stationary times of $X$, and in particular to determine whether there exists a finite one for any initial distribution. To do so, we will construct in a classical way a dual process with values in $\mathcal{P}(G)$ the set of subsets of $G$, intertwined with $X$ by the kernel $\Lambda$ defined by:
		$$\Lambda(Q,\cdot)=\mu_{\vert_Q}\quad\quad\forall Q\in\mathcal{P}(G)$$
	In the idea of what was done by Diaconis and Fill \cite{Diaco} on finite set or by Miclo \cite{Miclo} or Fill and Lyzinski \cite{Lyzinski} for diffusions on the real line, this dual process can grow at each step by adding adjacent points. The main problem is that it can also lose any boundary point. To keep it from separating in several parts, when this happens we force it to choose one of its connected components, with a probability proportional to its weight relative to $\mu$. We finally get a connected dual process. By studying its conditions of explosion, and showing that for some initial distributions on $X$ there exists a finite strong stationary time if and only if the one provided by this dual process is finite, we get the main result:

	\begin{thm}\label{thm}
	The process $X$ has a finite strong stationary time, whatever the initial distribution $\mu_0$, if and only if for every $i\in I$:

		\begin{equation}\label{borne-infini}
		\sum_{j=1}^{\infty}\mu^i(j+1)\sum_{k=1}^{j}\dfrac{1}{\mu^i(k)\L_{\varphi_i(k),\varphi_i(k+1)}}<\infty
		\end{equation}

	\end{thm}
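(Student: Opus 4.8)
The plan is to realize the separation of $X$ from $\mu$ through an explicit strong stationary dual and then to reduce the finiteness of its absorption time to a classical explosion criterion for birth--death processes. Concretely, I would take as dual state space the connected compact subsets of the compactified graph and intertwine a process $X^*$ with $X$ by the kernel $\Lambda(Q,\cdot)=\mu_{|Q}$. By the extension of Fill's theorem established in Section~\ref{sec-preliminaire}, it suffices to exhibit a generator $\L^*$ on these connected sets that is in algebraic duality with $\L$, that is $\L^*\Lambda=\Lambda\L$, with a matching initial condition $\mu_0^*\Lambda=\mu_0$; the resulting intertwined $X^*$ is then a strong stationary dual, and its time $T^*$ of absorption at the full graph $G$ (the state $\infty$) is a strong stationary time for $X$. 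The whole theorem then amounts to deciding when $T^*<\infty$ almost surely.

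The second step is to read off $\L^*$ from the duality relation and to reduce the global dynamics to one branch at a time. Writing out $\L^*\Lambda=\Lambda\L$ on sets of the form $\varphi_i(\dbc{0,n})$ together with a finite core, and using reversibility (\ref{eq-reversible}), forces the dual to grow by annexing an adjacent boundary vertex and to shrink by releasing one; the delicate point, and the mechanism that keeps $X^*$ valued in \emph{connected} sets, is that when releasing a cut vertex would disconnect $Q$ we instead let $X^*$ jump to a single connected component, chosen with probability proportional to its $\mu$-mass. Away from the core, the position of the boundary of $X^*$ along branch $i$ is then a birth--death process on $\N$, and the duality computation pins down its rates: its up-rate at $k$ is the original down-rate $\L_{\varphi_i(k+1),\varphi_i(k)}$ and its reversing measure is $\pi^*_i(k)=\bigl(\mu^i(k)\,\L_{\varphi_i(k),\varphi_i(k+1)}\bigr)^{-1}$, as one checks from (\ref{eq-reversible}) and (\ref{def-mu^i}).

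Third, I would translate absorption of $X^*$ into explosion of these boundary processes. The dual reaches $G$ exactly when, along every branch, its boundary has run off to infinity, so $\{T^*<\infty\}$ coincides, up to the almost surely finite excursions in the core, with the event that each of the $N$ boundary birth--death processes explodes in finite time. Applying Reuter's explosion criterion to the branch process with up-rate $\L_{\varphi_i(k+1),\varphi_i(k)}$ and reversing measure $\pi^*_i$ gives explosion if and only if $\sum_k \frac{1}{\L_{\varphi_i(k+1),\varphi_i(k)}\,\pi^*_i(k)}\sum_{\ell\le k}\pi^*_i(\ell)<\infty$, which after substituting $\pi^*_i$ and using reversibility is precisely (\ref{borne-infini}). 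Hence, when (\ref{borne-infini}) holds for every $i\in I$, each branch explodes in finite time, $T^*<\infty$ almost surely for every $\mu_0$, and $T^*$ is the desired finite strong stationary time.

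For the converse I would argue through sharpness of this dual and the universal bound $\mathfrak{s}(t)\le\P(T>t)$, valid for \emph{every} strong stationary time $T$. Establishing that the constructed dual is sharp, so that $\P(T^*>t)=\mathfrak{s}(t)$, lets me transfer the defect of $X^*$: if (\ref{borne-infini}) fails for some branch, that boundary process is nonexplosive and, starting $X$ from a point mass so that $X^*_0$ is a singleton, the dual cannot cover that branch in finite time, whence $\P(T^*=\infty)>0$, so $\lim_{t\to\infty}\mathfrak{s}(t)>0$ and therefore $\P(T=\infty)>0$ for any strong stationary time. I expect the main obstacle to lie in the second step and in this sharpness claim: correctly specifying $\L^*$ on the connected sets, controlling the reconnection moves and the finitely many core vertices so that the branch boundaries genuinely decouple into independent birth--death explosions, and proving that no cleverer coupling beats $T^*$. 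The birth--death explosion analysis itself is then routine once the rates are identified.
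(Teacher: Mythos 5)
Your sufficiency half follows the paper's route in outline, but it asserts a decoupling that is false as stated and that the paper works hard to avoid: the boundary of $X^*$ along branch $i$ is \emph{not} a Markov birth--death process (its rates involve $\mu(X^*_t)$ through the whole set, and a single $B$-type transition at a cut vertex of $C_G$ can delete an entire branch, including a $\Delta_i$ already reached), so $\{T^*<\infty\}$ does not reduce to the intersection of $N$ independent explosion events ``up to excursions in the core''. The paper instead builds one-sided comparison couplings (Lemma \ref{couple-suffisante}) between $X^*$ and genuine birth--death processes $Z^i$ with generators $\L^i$ read off from the sets $G^i_p$, valid only before an independent exponential time $Y$ excluding $B$ transitions; Lemma \ref{condition-suffisante} identifies explosion of $Z^i$ with (\ref{borne-infini}); and the conclusion comes from a positive-recurrence argument (a lower bound, uniform in the initial configuration, on the probability of reaching $\varphi_i(0)$ and then $\Delta_i$ while $f_i(X^*)$ stays increasing, whence $G^*_N$ and finally $\bar{G}$ are positive recurrent). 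You flag this as ``the main obstacle'', but your plan offers no mechanism for it, and it is the bulk of the work.

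The converse is where your proposal genuinely breaks. You route it through sharpness of the dual, $\P(T^*>t)=\mathfrak{s}(t)$; the paper never proves sharpness in the graph setting, and the remark after Theorem \ref{thm} explicitly conjectures that no $\Lambda$-linked sharp dual exists once there are more than two infinite branches, so this step is not merely missing but probably unprovable. Without sharpness the universal inequality $\mathfrak{s}(t)\le\P(T^*>t)$ points the wrong way: showing $\P(T^*=\infty)>0$ for your particular dual yields no lower bound on $\mathfrak{s}(t)$, hence says nothing about \emph{other} strong stationary times, which is exactly what must be excluded. The paper's actual argument is a direct separation computation: taking $\mu_0$ with finite support, it shows via the comparison processes $\L^{i,n}$ (Lemmas \ref{couple-necessaire} and \ref{condition-necessaire}) and the unreachability of the singletons $\{\Delta_i\}$ (inequality (\ref{absorbant})) that if (\ref{borne-infini}) fails for branch $i$ then $\P(\Delta_i\in X^*_t)=0$ for all $t$; it then proves the integrability Lemma \ref{integrabilité}, $\E\bigl(1/\mu(X^*_t)\bigr)<\infty$, precisely so that dominated convergence applies in $\mathfrak{s}(t)=1-\inf_{q}\E\bigl(\delta_q(X^*_t)/\mu(X^*_t)\bigr)\ge 1-\lim_{q\to\infty}\E\bigl(\delta_{\varphi_i(q)}(X^*_t)/\mu(X^*_t)\bigr)=1$, since a connected compact set not containing $\Delta_i$ meets only finitely many $\varphi_i(q)$. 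Thus $\mathfrak{s}\equiv1$ and \emph{every} strong stationary time is almost surely infinite. This intertwining-based computation of the separation, together with the integrability lemma that legitimizes it, is the key idea absent from your proposal; sharpness cannot substitute for it.
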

	
	Miclo \cite{Miclo-resolvent} gave some other equivalent condition to this: considering the restriction $\L_{\F}$ of $\L$ on the set:
		$$\F:=\{f\in\mathbb{L}^2(\mu): \mu[f]=0\},$$
	then (\ref{borne-infini}) holds for all $i\in I$ if and only if $\L_{\F}^{-1}$ is of trace class.
	
	\begin{rem}
	Unlike what was done for diffusions for instance, we do not prove here that the dual process is sharp for some initial distribution. Actually, we conjecture that there would not exist any dual process that would be  $\Lambda$-linked and sharp at the same time as soon as the number of infinite branches is greater than two (except of course in the trivial case where the initial distribution is the stationary one).
	\end{rem}

\section{Results on duality}\label{sec-preliminaire}


\paragraph{}In the sequel, we will work with the following objects:
	\begin{itemize}

	\item $\mathcal{S}$ and $\mathcal{S}^*$ are two countable, topological spaces (not necessarily endowed with discrete topology). The elements of those sets will generally be denoted by $x,y,z$ for $\mathcal{S}$ and $x^*,y^*,z^*$ for $\mathcal{S}^*$.

	\item $\Lambda$ is a transition kernel from $\mathcal{S}^*$ to $\mathcal{S}$.
    
	\item $\L$ and $\L^*$ are two generators, respectively on $\mathcal{S}$ and $\mathcal{S}^*$ and we assume that $\L$ is nonexplosive.

	\item $\mathbf{P}(\cdot)$ and $\mathbf{P}^*(\cdot)$ are the minimal solutions of the Kolmogorov equations associated to $\L$ and $\L^*$ respectively.    

	\item $\mu_0$ and $\mu_0^*$ are two probability measures on $\mathcal{S}$ and $\mathcal{S}^*$ respectively.
	
	\item $(X_t)_{t\geq0}$ is a Markov process with generator $\L$ and initial distribution $\mu_0$.
		    
	\end{itemize}

Instead of the hypothesis of finiteness of the sets $\{x:\Lambda(x^*,x)>0\}$ made in the \emph{General Settings} of Fill \cite{fill} (section 2.2B), we will assume that the diagonal of $\L$ is $\mu$-integrable, that is:
	$$\sum_{x\in\S}\Lambda(x^*,x)\L_x<+\infty$$
In particular, for all $f\in\mathcal{B}(\S)$, the matrix product $\Lambda\L f$ is well defined, associative and finite.

	\subsection{Minimal processes}\label{sec-min}


		\begin{thm}\label{cond-dual}
		If $(\mu_0,\L)$ and $(\mu_0^*,\L^*)$ satisfy the double condition:
		
			\begin{equation}\label{dual-gen}
				\begin{cases}
				\mu_0^*\Lambda=\mu_0\\
				\L^*\Lambda=\Lambda\L
				\end{cases}
			\end{equation}
	
		\noindent then there exists a minimal process $(X^*_t)_{t\geq0}$ on $\mathcal{S}^*$  with generator $\L^*$ and initial distribution $\mu_0^*$, satisfying (\ref{A1}) and (\ref{A3}) until its time of explosion $T$, that is, for all $t\geq0$:
		
			\begin{gather}
			\l(X_t\ |\ \mathcal{F}^{X^*}_t,\ T>t)=\Lambda(X^*_t,\cdot)\label{indep-1}\\
			\l(X^*_t\ |\ X,\ T>t)=\l(X^*_t\ |\ \mathcal{F}^X_t,\ T>t)\label{indep-2}
			\end{gather}

		\end{thm}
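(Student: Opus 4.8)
The plan is to construct the dual process $X^*$ explicitly and then verify the two intertwining relations by a martingale/induction argument that mirrors Fill's original proof, but adapted to allow the process to explode. First I would use the algebraic duality $\L^*\Lambda = \Lambda\L$ to define the joint law of $(X, X^*)$. The natural approach is to specify the pair $(X_t, X^*_t)$ as a Markov process on $\S \times \S^*$ whose marginal on $\S$ is the given process with generator $\L$, and whose conditional law given the $X$-trajectory is governed by $\L^*$ in a way that keeps $\Lambda(X^*_t,\cdot)$ equal to the conditional distribution of $X_t$. Concretely, I would build the joint process through its jump chain and holding times, checking that the construction is consistent up to the explosion time $T$ of $X^*$, and that it defines a genuine minimal process with generator $\L^*$ and initial law $\mu_0^*$ (this uses $\mu_0^*\Lambda = \mu_0$ to get the initial conditions right).

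Next I would establish the intertwining relation (\ref{indep-1}), namely $\l(X_t \mid \F^{X^*}_t, T>t) = \Lambda(X^*_t,\cdot)$. The key computational tool available from the excerpt is that, for $f \in \mathcal{B}(\S)$ and each $n$, the quantity
	$$f(X_{t\wedge\tau_n}) - f(X_0) - \int_0^{t\wedge\tau_n} \L[f](X_s)\,\mathrm{d}s$$
is an $\F^X$-martingale, together with the standing assumption that the diagonal of $\L$ is $\mu$-integrable, so that $\Lambda\L f$ is well defined, associative and finite. I would test the claimed identity against bounded functions $f$: writing $\E[f(X_t)\,\ind_{\{T>t\}} \mid \F^{X^*}_t]$ and comparing it with $(\Lambda f)(X^*_t)\,\ind_{\{T>t\}}$, I would show the two agree by deriving a common evolution equation. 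Differentiating in $t$, the $X$-side produces $\Lambda\L f$ while the $X^*$-side produces $\L^*\Lambda f$, and the algebraic duality forces these to coincide; a Gronwall or uniqueness-of-minimal-solution argument then closes the identity. The stopping at $\tau_n$ and passage to the limit $n\to\infty$ handles the fact that $\L$ need not have uniformly bounded rates.

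For relation (\ref{indep-2}), the conditional independence $\l(X^*_t \mid X, T>t) = \l(X^*_t \mid \F^X_t, T>t)$, I would argue that in the joint construction the future increments of $X$ beyond time $t$ do not influence the law of $X^*_t$ once $\F^X_t$ is known. This follows from the Markovian structure of the coupled construction: conditionally on $\F^X_t$ and on $\{T>t\}$, the variable $X^*_t$ is a measurable function of $X_{[0,t]}$ and the auxiliary randomization, both independent of the post-$t$ evolution of $X$. I would make this precise by verifying the defining property of a randomized stopping time and invoking the independence of the extra randomization $\mathcal{G}$ from $\F^X_\infty$.

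The hard part will be controlling everything up to but not past the explosion time $T$ of $X^*$, since $\L^*$ is not assumed nonexplosive. All the martingale identities must be stated with the cutoff $\{T>t\}$, and I expect the main obstacle to be justifying the interchange of limit and expectation when passing from the stopped processes at $\tau_n$ (and from a truncation of $X^*$ below its explosion) to the full identities, while ensuring that the associativity $\mu_0^*(\Lambda\L) = (\mu_0^*\Lambda)\L$ and the finiteness guaranteed by the $\mu$-integrability of the diagonal are genuinely enough to replace Fill's finiteness-of-supports hypothesis. Verifying that $\Lambda\L f = \L^*\Lambda f$ remains valid as an honest (absolutely convergent) matrix identity on $\mathcal{B}(\S)$, rather than merely formally, is the delicate point where the integrability assumption must be used carefully.
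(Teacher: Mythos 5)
Your construction of the coupled chain and your treatment of (\ref{indep-2}) follow the same route as the paper, which builds $(X,X^*)$ explicitly through an auxiliary coupling generator $\bar{\L}$ on $\bar{\S}$ and obtains (\ref{indep-2}) by construction; likewise, the careful verification that $\L^*\Lambda f=\Lambda\L f$ holds as an absolutely convergent identity (using the $\mu$-integrability of the diagonal of $\L$) is exactly the first step of the paper's check of (\ref{indep-1}). The genuine gap is in how you propose to close (\ref{indep-1}): you differentiate the two candidate expressions, observe that algebraic duality makes the derivatives agree, and then invoke ``a Gronwall or uniqueness-of-minimal-solution argument''. When $\L^*$ is explosive --- which is the whole point of the theorem, since the interesting duals in this paper reach their absorbing state only through explosion --- the Kolmogorov equations associated to $\L^*$ do \emph{not} have a unique solution, and Gronwall is unavailable because the rates $\L^*_{x^*}$ are unbounded. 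Concretely, the paper shows that $Q_{x^*,\bar{y}}(t):=\sum_{x}\Lambda(x^*,x)\bar{\mathbf{P}}_{\bar{x},\bar{y}}(t)$ and $Q^*_{x^*,\bar{y}}(t):=\mathbf{P}^*_{x^*,y^*}(t)\Lambda(y^*,y)$ are both fixed points of the same integral map (\ref{expr_Q}); your argument needs these two fixed points to coincide, and that is precisely what can fail for explosive generators.

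What the paper does instead is weaker-then-stronger: it first proves only the \emph{inequality} (\ref{2.41}), $Q\geq Q^*$, by checking that the $N$-jump truncations $Q^{*(N)}_{x^*,\bar{y}}(t)=\P_{x^*}(Y^*_t=y^*,\,t<J_N)\Lambda(y^*,y)$ satisfy the recursion $Q^{*(N)}=f(Q^{*(N-1)})$ for the monotone map $f$ of (\ref{expr_Q}), together with the base case $Q^{*(1)}\leq Q$, so that induction and monotone convergence give $Q^*\leq Q$. Feeding this into the induction for the finite-dimensional laws yields only the inequality ``$\geq$'' in (\ref{2.21}). Equality is then restored not by an analytic uniqueness argument but by mass accounting: summing both sides of (\ref{2.21}) over all $x^*_0,\dots,x^*_{k+1},x_{k+1}$ gives $1-\P(T\leq t_{k+1})$ on each side, \emph{because} $\mathbf{P}^*(\cdot)$ is the minimal semi-group associated to $\L^*$, and a termwise inequality between nonnegative families with equal total mass forces termwise equality. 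This combination --- a monotone comparison with the truncated minimal semi-group, followed by conservation of (defective) mass at the level of finite-dimensional distributions --- is the essential idea your proposal is missing; without it, your plan proves the theorem only in the nonexplosive case, where it is already covered by Fill's original argument.
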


	\paragraph{}For all $f\in\mathcal{B}(\S)$ and all $x^*\in\S^*$, we have:
		$$\sum_{y^*\in\S^*}\sum_{y\in\S}|\L^*_{x^*,y^*}\Lambda(y^*,y)f(y)|\le \underset{y\in\S}{\sup}\,f(y)\sum_{y^*\in\S^*}|\L^*_{x^*,y^*}|=2\,\underset{y\in\S}{\sup}\,f(y)\L^*_{x^*}$$
	so the matrix product $\L^*\Lambda f$ is always commutative and finite-valued.
	Under condition (\ref{dual-gen}), we set $\Gamma:=\L^*\Lambda=\Lambda\L$ and define a matrix $\bar{\L}$, which will be that of a coupling generator, by:

		\begin{equation}\label{gen-couple}
		\bar{\L}_{\bar{x},\bar{y}}:=
		\left\lbrace\begin{array}{c l}
		-\left(\L_x+\L^*_{x^*}+\dfrac{\Gamma(x^*,x)}{\Lambda(x^*,x)}\right) & \text{, if } \bar{x}=\bar{y}\\
		\L_{x,y} & \text{, if } y\neq x,\ y^*=x^*\smallskip\\
		\dfrac{\L^*_{x^*,y^*}\Lambda(y^*,x)}{\Lambda(x^*,x)} & \text{, if } y=x,\ y^*\neq x^*\smallskip\\
		\dfrac{\L_{x,y}\L^*_{x^*,y^*}\Lambda(y^*,y)}{\Gamma(x^*,y)} & \text{, if } \Lambda(x^*,y)=0,\ \Gamma(x^*,y)>0\\
		0 & \text{, otherwise} \\
		\end{array}\right.
		\end{equation}
		
\noindent 	with:
		$$\bar{x}=(x,x^*),\;\bar{y}=(y,y^*)\in\mathcal{\bar{S}}:=\{(z,z^*)\in\mathcal{S}\times\mathcal{S}^*:\;\Lambda(z^*,z)>0\}$$
	Some remarks on this definition:\\

		\begin{rem}\label{rem-gen}
		
			\begin{enumerate}[(a)]
			\item$\Lambda(x^*,y)=0$ implies both $x\neq y$ and $x^*\neq y^*$, so these five cases are mutually exclusive (and obviously exhaustive).

			\item \label{rq} If $\L^*_{x^*,y^*}$ or $\L_{x,y}>0$ and $\Lambda(x^*,y)=0$, then with the previous remark one checks that $\Gamma(x^*,y)>0$. Since the fourth coefficient in (\ref{gen-couple}) is $0$ as soon as $\L_{x,y}$ or $\L^*_{x^*,y^*}$ is null, we may replace the fourth case by "$\Lambda(x^*,y)=0,\ \L_{x,y}\L_{x^*,y^*}>0$" without changing the definition of the generator (in that case, we still have $\Gamma(x^*,y)>0$, and in the cases which have been added to the fifth one, the coefficient $\bar{\L}_{\bar{x},\bar{y}}$ is $0$).	
%
%

			\end{enumerate}
		
		\end{rem}

	With the previous remarks and a straightforward computation, one easily checks that the coefficients above define a generator on $\bar{\mathcal{S}}$.

		\begin{proof}[Proof of Theorem \ref{cond-dual}]

		The proof is essentially the same as in Fill \cite{fill}, Proposition 4 and section 2.3. Most parts are identical and given here for convenience.
		
		Let the condition (\ref{dual-gen}) be fulfilled.

		In a first time, we will construct $X^*$ coupled with $X$, so that the couple $(X,X^*)$ has generator $\bar{\L}$. Then we will check that $(X,X^*)$ satisfies (\ref{indep-1}) and (\ref{indep-2}).

		\paragraph{Coupling of the two processes:}
		Having observed $X_0=x_0\in\mathcal{S}$, we set for each $x_0^*\in\S^*$:
		
			\begin{equation}
			X^*_0=x^*_0\quad \text{ with probability }\ \dfrac{\mu^*_0(x^*_0)\Lambda(x^*_0,x_0)}{\mu_0(x_0)}
			\end{equation}
			
		The distribution of $(X_0,X^*_0)$ is given by:

			\begin{align}\label{loi-initiale}
			\bar{\mu}_0(x_0,x^*_0)&=\P(X_0=x_0)\P(X^*_0=x^*_0|X_0=x_0)\nonumber\\
			&=\mu_0(x_0)\dfrac{\mu^*_0(x^*_0)\Lambda(x^*_0,x_0)}{\mu_0(x_0)}\nonumber\\
			&=\mu^*_0(x^*_0)\Lambda(x^*_0,x_0)
			\end{align}

		\noindent which is a probability measure on $\bar{\mathcal{S}}$ since:

			\begin{align*}
			\sum_{x_0,x^*_0}\bar{\mu}_0(x_0,x^*_0)&=\sum_{x^*_0}\mu^*_0(x^*_0)\sum_{x_0}\Lambda(x^*_0,x_0)\\
			&=\sum_{x^*_0}\mu^*_0(x^*_0)\\
			&=1
			\end{align*}
	
		In addition, the marginal distribution of $X^*_0$ is $\mu^*_0$.

		\paragraph{}We set $\tau_0=0$, so at this point $X^*$ is constructed up to this time (recall that $X$ is already constructed). We will now construct $X^*$ step by step through times $\tau_k,\;k\in\N$, where $\tau_k$ is the $k$-th transition of the bivariate chain $(X,X^*)$ (in particular, $X^*$ may not jump at time $\tau_k$).
		
		Let $k\geq1$, and assume by induction that $\tau_0,\dots,\tau_{k-1}$ and $X^*_{[0,\tau_{k-1}]}$ are already constructed. We call $\sigma_k$ the time of the first transition of $X$ following $\tau_{k-1}$ (so it might be equal to $\sigma_{k-1}$ if $X$ has not jumped at time $\tau_{k-1}$). Let $\bar{x}=(x,x^*):=(X,X^*)_{\tau_{k-1}}$, and let $\varepsilon_{k-1}$ be an exponential random variable with parameter $\bar{\L}_{\bar{x}}-\L_x$, independent from $(\epsilon_i)_{i\leq k-2}$ and $X$. Then:

			\begin{enumerate}
			\item If $\tau_{k-1}+\varepsilon_{k-1}>\sigma_k$, we set $\tau_k=\sigma_k$, and:
		
				\begin{equation}
				X^*_{\tau_k}=y^*\quad \text{ with probability }\ \dfrac{\bar{\L}_{\bar{x},\bar{y}}}{\L_{x,y}}
				\end{equation}

			\noindent where $y=X_{\tau_k}$ and $\bar{y}=(y,y^*)$.

			\item Otherwise, we set $\tau_k=\tau_{k-1}+\varepsilon_{k-1}$, and:

				\begin{equation}
				X^*_{\tau_k}=y^*\neq x^*\quad \text{ with probability }\ \dfrac{\bar{\L}_{(x,x^*),(x,y^*)}}{\bar{\L}_{\bar{x}}-\L_x}
				\end{equation}

			\end{enumerate}

		These probabilities are well defined, since $\L_{x,y}$ and $\bar{\L}_{\bar{x}}-\L_x$ are obviously different from $0$ if $X_{\tau_k}=y$ or if $\varepsilon_{k-1}<\infty$ respectively. In addition, in the first case, we get as announced $X^*_{\tau_k}=x^*$ a.s. if $\Lambda(x^*,y)\neq0$, and:
			$$X^*_{\tau_k}=y^*\neq x^*\quad \text{ with probability }\ \dfrac{\L^*_{x^*,y^*}\Lambda(y^*,y)}{\Gamma(x^*,y)}$$
		if $\Lambda(x^*,y)=0$, the sum of these probabilities being equal to $1$.

		\paragraph{}Thus we get $\tau_k-\tau_{k-1}=\min(\varepsilon_{k-1},\sigma_k-\tau_{k-1})$. Now, $\varepsilon_{k-1}\sim\exp(\bar{\L}_{\bar{x}}-\L_x)$ and $\sigma_k-\tau_{k-1}\sim\exp(\L_x)$ are independent, so $\tau_k-\tau_{k-1}\sim\exp(\bar{\L}_{\bar{x}})$. In addition, considering the events $A:=\{(X,X^*)_{\tau_{k-1}}=\bar{x}\}$ and $B:=\{(X,X^*)_{\tau_{k}}=\bar{y}\}$, we get the following transition probabilities:

			\begin{itemize}
			
			\item[$\bullet$]If $y=x,\;y^*\neq x^*$:

				\begin{align*}
				\P(B\ |\ A)&=\P(X_{\tau_k}=x\ |\ A)\;\P(X^*_{\tau_k}=y^*\ |\ X_{\tau_k}=x,\ A)\\
				&=\P(\varepsilon_{k-1}<\sigma_k-\tau_{k-1})\;\P(X^*_{\tau_k}=y^*\ |\ \varepsilon_{k-1}<\sigma_k-\tau_{k-1},\ A)\\
				&=\left(\dfrac{\bar{\L}_{\bar{x}}-\L_x}{\bar{\L}_{\bar{x}}}\right)\left(\dfrac{\bar{\L}_{(x,x^*),(x,y^*)}}{\bar{\L}_{\bar{x}}-\L_x}\right)\\
				&=\dfrac{\bar{\L}_{(x,x^*),(x,y^*)}}{\bar{\L}_{\bar{x}}}
				\end{align*}
	
			\item[$\bullet$]If $y\neq x$:

				\begin{align*}
				\P(B\ |\ A)&=\P(X_{\tau_k}=y\ |\ A)\;\P(X^*_{\tau_k}=y^*\ |\ X_{\tau_k}=y,\ A)\\
				&=\dfrac{\L_{x,y}}{\L_x}\;\P(\varepsilon_{k-1}>\sigma_k-\tau_{k-1}\ |\ A)\;\P(X^*_{\tau_k}=y^*\ |\ X_{\tau_k}=y,\ A)\\
				&=\dfrac{\L_{x,y}}{\L_x}\left(\dfrac{\L_x}{\bar{\L}_{\bar{x}}}\right)\left(\dfrac{\bar{\L}_{\bar{x},\bar{y}}}{\L_{x,y}}\right)\\
				&=\dfrac{\bar{\L}_{\bar{x},\bar{y}}}{\bar{\L}_{\bar{x}}}		
				\end{align*}

			\end{itemize}

		By construction, the process $(X,X^*)$ is a Markov jump process, defined up to its first explosion time, and satisfying (\ref{indep-2}). We have proven with the previous computations that its generator is indeed $\bar{\L}$.

		\paragraph{Check of the intertwining (relation (\ref{indep-1})):} This will be made in three steps. Firstly, we will show an equation with the generators:

			\begin{equation}\label{2.39}
			\forall x^*\in\mathcal{S}^*,\;f\in\mathcal{B}(\bar{\S}), \qquad \sum_{x\in\mathcal{S}}\sum_{\bar{y}\in\bar{\S}}\Lambda(x^*,x)\bar{\L}_{\bar{x},\bar{y}}f(\bar{y})=\sum_{\bar{y}\in\bar{\S}}\L^*_{x^*,y^*}\Lambda(y^*,y)f(\bar{y})
			\end{equation}

		\noindent Heuristically, the l.h.s. corresponds to all the transitions from some given $x^*\in\S^*$ to a $x\in\S$, and then from $(x,x^*)\in\bar{\S}$ to $(y,y^*)\in\bar{\S}$, whereas the r.h.s. correspond to a transition from $x^*$ to $y^*$, then to $y$, without wondering about $x$.
		
		In a second time, we will use this equation to prove an inequality involving the corresponding semi-groups:

			\begin{equation}\label{2.41}
			\forall x^*\in\mathcal{S}^*,\;\bar{y}\in\bar{\mathcal{S}}, \quad \sum_{x\in\mathcal{S}}\Lambda(x^*,x)\bar{\mathbf{P}}_{\bar{x},\bar{y}}(t)\geq \mathbf{P}^*_{x^*,y^*}(t)\Lambda(y^*,y)
			\end{equation}

		\noindent and finally, we will deduce by induction on $k$:

			\begin{equation}\label{2.21}
			\P(X^*_{t_0}=x^*_0,\dots,X^*_{t_k}=x^*_k,\;X_{t_k}=x_k)=\mu^*_0(x^*_0)\prod_{i=1}^{k}\mathbf{P}^*_{x^*_{i-1},x^*_i}(t_i-t_{i-1})\Lambda(x^*_k,x_k)
			\end{equation}

		\noindent $\forall k\geq0,\ \forall0=t_0\leq\dots\leq t_k,\ \forall x^*_0,\dots,x^*_k\in\mathcal{S}^*,\ \forall x_k\in\mathcal{S}$, which implies both (\ref{indep-1}) and that $\mathbf{P}^*(\cdot)$ is the semi-group of $X^*$.

		\paragraph{}Proof of (\ref{2.39}): we start by proving the following matrix equality:
			\begin{equation}\label{2.39bis}
			\sum_{x\in\mathcal{S}}\Lambda(x^*,x)\bar{\L}_{\bar{x},\bar{y}}=\L^*_{x^*,y^*}\Lambda(y^*,y)
			\end{equation}
		Let $x^*\in\mathcal{S}^*,\;\bar{y}\in\bar{\mathcal{S}}$, with $x^*\neq y^*$. If $\Lambda(x^*,y)\neq0$, then from the definition of $\bar{\L},\;\bar{\L}_{\bar{x},\bar{y}}\neq0$ implies $y=x$. Hence:

			\begin{align*}
			\sum_{x\in\mathcal{S}}\Lambda(x^*,x)\bar{\L}_{\bar{x},\bar{y}}&=
				\begin{cases}
				\Lambda(x^*,y)\bar{\L}_{(y,x^*),\bar{y}}\quad \text{if }(y,x^*)\in\bar{\mathcal{S}}\smallskip\\
				\underset{x\neq y}{\sum}\Lambda(x^*,x)\bar{\L}_{\bar{x},\bar{y}}\quad \text{otherwise}\\
				\end{cases}\\
				&=\begin{cases}
				\Lambda(x^*,y)\dfrac{\L^*_{x^*,y^*}\Lambda(y^*,y)}{\Lambda(x^*,y)}\quad \text{ if }(y,x^*)\in\bar{\mathcal{S}}\smallskip\\
				\underset{x\neq y}{\sum}\Lambda(x^*,x)\dfrac{\L_{x,y}\L^*_{x^*,y^*}\Lambda(y^*,y)}{\Gamma(x^*,y)} \quad\text{otherwise}
				\end{cases}\\
			&=\L^*_{x^*,y^*}\Lambda(y^*,y)
			\end{align*}

		And if $x^*\in\mathcal{S}^*,\;\bar{y}\in\bar{\mathcal{S}}$, with $x^*=y^*$, then:

			\begin{align*}
			\sum_{x:\bar{x}\in\bar{\mathcal{S}}}\Lambda(x^*,x)\bar{\L}_{\bar{x},\bar{y}}&=-\Lambda(x^*,y)\bar{\L}_{(y,x^*)}+\sum_{x\neq y}\Lambda(x^*,x)\bar{\L}_{\bar{x},\bar{y}}\\
			&=-\Lambda(x^*,y)\left(\L_y+\L^*_{x^*}+\dfrac{\Gamma(x^*,y)}{\Lambda(x^*,y)}\right)+\sum_{x\neq y}\Lambda(x^*,x)\L_{x,y}\\
			&=-\Lambda(x^*,y)(\L_y+\L^*_{x^*})-\Gamma(x^*,y)+\Gamma(x^*,y)+\Lambda(x^*,y)\L_y\\
			&=-\L^*_{x^*}\Lambda(x^*,y)\\
			&=\L^*_{x^*,y^*}\Lambda(y^*,y)
			\end{align*}
			
		Now, for $f\in\mathcal{B}(\bar{\S})$, we have:
		
			\begin{align*}
				&\sum_{x:\bar{x}\in\bar{\mathcal{S}}}\sum_{\bar{y}\in\bar{\S}}\Lambda(x^*,x)\bar{\L}_{\bar{x},\bar{y}}f(\bar{y})=\sum_{x:\bar{x}\in\bar{\mathcal{S}}}\Bigg[\Lambda(x^*,x)\bar{\L}_{\bar{x},\bar{x}}f(\bar{x})+\underset{y:(y,x^*)\in\bar{\S}\atop y\neq x}{\sum}\Lambda(x^*,x)\L_{x,y}f((y,x^*))\\
				&\quad+\underset{y^*:(x,y^*)\in\bar{\S}\atop y^*\neq x^*}{\sum}\L^*_{x^*,y^*}\Lambda(y^*,x)f((x,y^*))+\sum_{\bar{y}:(y,x^*)\notin\bar{\S}}\dfrac{\Lambda(x^*,x)\L_{x,y}\L^*_{x^*,y^*}\Lambda(y^*,y)f(\bar{y})}{\Gamma(x^*,y)}\Bigg]\\
				=&\sum_{x:\bar{x}\in\bar{\mathcal{S}}}\Lambda(x^*,x)\bar{\L}_{\bar{x},\bar{x}}f(\bar{x})+\sum_{x:\bar{x}\in\bar{\mathcal{S}}}\underset{y:(y,x^*)\in\bar{\S}\atop y\neq x}{\sum}\Lambda(x^*,x)\L_{x,y}f((y,x^*))\\
				&\quad+\sum_{x:\bar{x}\in\bar{\mathcal{S}}}\underset{y^*:(x,y^*)\in\bar{\S}\atop y^*\neq x^*}{\sum}\L^*_{x^*,y^*}\Lambda(y^*,x)f((x,y^*))+\sum_{x:\bar{x}\in\bar{\mathcal{S}}}\sum_{\bar{y}:(y,x^*)\notin\bar{\S}}\dfrac{\Lambda(x^*,x)\L_{x,y}\L^*_{x^*,y^*}\Lambda(y^*,y)f(\bar{y})}{\Gamma(x^*,y)}
			\end{align*}
			
		\noindent provided that the four sums in the last equality converge. For the first one, recall that:
			$$\bar{\L}_{\bar{x},\bar{x}}=\L_{x,x}+\L^*_{x^*,x^*}-\dfrac{\Gamma(x^*,x)}{\Lambda(x^*,x)}$$
		so we see that the first sum converges since the diagonal of $\L$ is $\mu$-integrable. The second one converges for the same reason, the third one because $f$ is bounded and:
			$$\sum_{y^*\in\S^*}\sum_{x\in\S}|\L^*_{x^*,y^*}|\Lambda(y^*,x)=2\L^*_{x^*}<+\infty,$$
		and the last one because $f$ is bounded and:
			\begin{align*}
			\sum_{y^*\in\S^*}\sum_{y:(y,x^*)\notin\bar{\S}}\sum_{x\in\S}\dfrac{\Lambda(x^*,x)|\L_{x,y}\L^*_{x^*,y^*}|\Lambda(y^*,y)}{\Gamma(x^*,y)}&=\sum_{y^*\in\S^*}\sum_{y:(y,x^*)\notin\bar{\S}}|\L^*_{x^*,y^*}|\Lambda(y^*,y)\\
			&\le2\L^*_{x^*}
			\end{align*}
		
		Getting back to our previous computaion, we deduce:
			\begin{align*}
				&\sum_{x:\bar{x}\in\bar{\mathcal{S}}}\sum_{\bar{y}\in\bar{\S}}\Lambda(x^*,x)\bar{\L}_{\bar{x},\bar{y}}f(\bar{y})=\sum_{x:\bar{x}\in\bar{\mathcal{S}}}\Lambda(x^*,x)\bar{\L}_{\bar{x},\bar{x}}f(\bar{x})+\sum_{x:\bar{x}\in\bar{\mathcal{S}}}\underset{y:(y,x^*)\in\bar{\S}\atop y\neq x}{\sum}\Lambda(x^*,x)\L_{x,y}f((y,x^*))\\
				&\quad+\sum_{x:\bar{x}\in\bar{\mathcal{S}}}\underset{y^*:(x,y^*)\in\bar{\S}\atop y^*\neq x^*}{\sum}\L^*_{x^*,y^*}\Lambda(y^*,x)f((x,y^*))+\sum_{x:\bar{x}\in\bar{\mathcal{S}}}\sum_{\bar{y}:(y,x^*)\notin\bar{\S}}\dfrac{\Lambda(x^*,x)\L_{x,y}\L^*_{x^*,y^*}\Lambda(y^*,y)f(\bar{y})}{\Gamma(x^*,y)}\\
				=&\sum_{x:\bar{x}\in\bar{\mathcal{S}}}\Lambda(x^*,x)\bar{\L}_{\bar{x},\bar{x}}f(\bar{x})+\sum_{y:(y,x^*)\in\bar{\S}}\sum_{x\neq y\in\mathcal{S}}\Lambda(x^*,x)\L_{x,y}f((y,x^*))\\
				&\quad+\sum_{y^*\in\S^*}\sum_{x:(x,y^*)\in\bar{\S}}\L^*_{x^*,y^*}\Lambda(y^*,x)f((x,y^*))+\sum_{\bar{y}:(y,x^*)\notin\bar{\S}}\sum_{x\neq y\in\S}\dfrac{\Lambda(x^*,x)\L_{x,y}\L^*_{x^*,y^*}\Lambda(y^*,y)f(\bar{y})}{\Gamma(x^*,y)}\\
				&=\sum_{\bar{y}\in\bar{\S}}\sum_{x:\bar{x}\in\bar{\mathcal{S}}}\Lambda(x^*,x)\bar{\L}_{\bar{x},\bar{y}}f(\bar{y})\\
				&=\sum_{\bar{y}\in\bar{\S}}\L^*_{x^*,y^*}\Lambda(y^*,y)f(\bar{y})
			\end{align*}
		
		\noindent by (\ref{2.39bis}).

		\paragraph{}Proof of (\ref{2.41}): Let $\bar{\mathbf{P}}(\cdot)$ be the minimal solution of the Kolmogorov equations associated to $\bar{\L}$. For every $t\geq0$, we define the matrix $(Q_{x^*,\bar{y}}(t))_{(x^*,\bar{y})\in\mathcal{S}^*\times\bar{\mathcal{S}}}$ as follows:

			\begin{equation*}
			Q_{x^*,\bar{y}}(t):=\underset{x:\bar{x}\in\mathcal{\bar{S}}}{\sum}\Lambda(x^*,x)\bar{\mathbf{P}}_{\bar{x},\bar{y}}(t)
			\end{equation*}

		Observe that for every $(x^*,\bar{y})\in\mathcal{S}^*\times\bar{\mathcal{S}}$, the function $Q_{x^*,\bar{y}}(\cdot)$ is the uniform limit of the sequence $(Q^{(n)}_{x^*,\bar{y}}(\cdot))_n$ defined by:
			$$Q^{(n)}_{x^*,\bar{y}}(t):=\underset{x\in A_n}{\sum}\Lambda(x^*,x)\bar{\mathbf{P}}_{\bar{x},\bar{y}}(t)$$
		where $(A_n)_n$ is a sequence of finite subsets of $\S$ such that $A_n\nearrow\S$. Thus:
			\begin{align*}
			Q'_{x^*,\bar{y}}(t)&=\sum_{x:\bar{x}\in\mathcal{\bar{S}}}\Lambda(x^*,x)\bar{\mathbf{P}}'_{\bar{x},\bar{y}}(t)\\
			&=\sum_{x:\bar{x}\in\mathcal{\bar{S}}}\Lambda(x^*,x)\sum_{\bar{z}}\bar{\L}_{\bar{x},\bar{z}}\bar{\mathbf{P}}_{\bar{z},\bar{y}}(t)\\
			&=\sum_{\bar{z}}\L^*_{x^*,z^*}\Lambda(z^*,z)\bar{\mathbf{P}}_{\bar{z},\bar{y}}(t)\\
			&=\sum_{z^*\in\mathcal{S}^*}\L^*_{x^*,z^*}Q_{z^*,\bar{y}}(t)
			\end{align*}

		\paragraph{}We then get, $\forall (x^*,\bar{y})\in\mathcal{S}^*\times\bar{\S}$:

			\begin{align*}
			\nonumber\dfrac{\mathrm{d}}{\mathrm{d}t}\left[\exp(\L^*_{x^*}t)Q_{x^*,\bar{y}}(t)\right]&= \left[Q'_{x^*,\bar{y}}(t)+\L^*_{x^*}Q_{x^*,\bar{y}}(t)\right]\exp(\L^*_{x^*}t)\\
			\nonumber&=\left[\sum_{z^*}\L^*_{x^*,z^*}Q_{z^*,\bar{y}}(t)+\L^*_{x^*}Q_{x^*,\bar{y}}(t)\right]\exp(\L^*_{x^*}t)\\
			&=\sum_{z^*\neq x^*}\L^*_{x^*,z^*}Q_{z^*,\bar{y}}(t)\exp(\L^*_{x^*}t)
			\end{align*}

		This sum is finite so the function is continuous in $t$. We apply the fundamental theorem of calculus,	with $Q_{x^*,\bar{y}}(0)=\delta_{x^*,y^*}\Lambda(y^*,y)$:

			\begin{align}
			\exp(\L^*_{x^*}t)Q_{x^*,\bar{y}}(t)&=\delta_{x^*,y^*}\Lambda(x^*,y)+\int_0^t\sum_{z^*\neq x^*}\L^*_{x^*,z^*}Q_{z^*,\bar{y}}(s)\exp(\L^*_{x^*}s)\mathrm{d}s \nonumber\\
			Q_{x^*,\bar{y}}(t)&=\delta_{x^*,y^*}\exp(-\L^*_{x^*}t)\Lambda(x^*,y)\nonumber\\
			&\quad+\int_0^t\sum_{z^*\neq x^*}\L^*_{x^*,z^*}Q_{z^*,\bar{y}}(s)\exp(\L^*_{x^*}(s-t))\mathrm{d}s 
			\end{align}

		\noindent We have expressed $Q$ as a fixed point for the function $f$ defined by:
			\begin{equation*} 
			f\colon \mathcal{M_{\S^*,\bar{\S}}(\mathcal{C}(\R^+,\;\R^+))}\rightarrow\mathcal{M_{\S^*,\bar{\S}}(\mathcal{C}(\R^+,\;\R^+))}
			\end{equation*}
			\begin{equation}
			f(R)_{x^*,\bar{y}}(t)=\delta_{x^*,y^*}\exp(-\L^*_{x^*}t)\Lambda(x^*,y)
			+\int_0^t\sum_{z^*\neq x^*}\L^*_{x^*,z^*}R_{z^*,\bar{y}}(s)\exp(\L^*_{x^*}(s-t))\mathrm{d}s \label{expr_Q}
			\end{equation}
		so that $Q=f(Q)$.
		
		We call:

			\begin{itemize}
			
			\item $Y^*$ a Markov process with generator $\L^*$
			\item $J$ the jumping times chain of the process $Y^*$

			\item $P_{x^*,y^*}^{*(N)}(t)$ the probability that $Y^*$ goes from $x^*$ to $y^*$ in time $t$, and with a maximum of $N-1$ steps:
				$$P_{x^*,y^*}^{*(N)}(t):=\P_{x^*}(Y^*_t=y^*,t< J_N)$$
	  
			\item $Q^{*(N)}_{x^*,\bar{y}}(t):=P^{*(N)}_{x^*,y^*}(t)\Lambda(y^*,y)$
			
			\end{itemize}

		\noindent Then we have:

			\begin{align*}
			P_{x^*,y^*}^{*(1)}(t)&=\P_{x^*}(t<J_1)\delta_{x^*,y^*}\\
			&=\exp(-\L^*_{x^*}t)\delta_{x^*,y^*}
			\end{align*}

		\noindent and so:
			$$Q_{x^*,\bar{y}}^{*(1)}(t)=\delta_{x^*,y^*}\exp(-\L^*_{x^*}t)\Lambda(x^*,y)\leq Q_{x^*,\bar{y}}(t)$$
		We will show by induction that $Q^{*(N)}=f(Q^{*(N-1)})$, where $f$ is the function defined in (\ref{expr_Q}), to derive the inequality:

			\begin{equation*}
			Q^*\leq Q\quad(\textit{i.e. } Q^*_{x^*,\bar{y}}(t)\leq Q_{x^*,\bar{y}}(t),\ \forall x^*,\bar{y},t)
			\end{equation*}

		\noindent where:
			$$Q^*_{x^*,\bar{y}}(t):=\mathbf{P}^*_{x^*,y^*}(t)\Lambda(y^*,y)$$
		To do so, we condition by $Y^*_0=x^*$ and we separate in two disjoint events:

			\begin{equation*}
			\{Y^*_t=y^*,t< J_N\}=\{Y^*_t=y^*,t< J_1\}\cup\{Y^*_t=y^*,J_1\leq t< J_N\}
			\end{equation*}

		\noindent the first one having probability $P_{x^*,y^*}^{*(1)}(t)$. For the second one, we condition by $J_1$ (which is exponentially distributed):

			\begin{align*}
			\P_{x^*}(Y^*_t=y^*,J_1\leq t< J_N)&=\E_{x^*}(\mathbf{1}_{t<J_N}\mathbf{1}_{Y^*_t=y^*}\mathbf{1}_{J_1<t})\\
			&=\E_{x^*}\left[\E_{x^*}(\mathbf{1}_{t<J_N}\mathbf{1}_{Y^*_t=y^*}\ |\ J_1)\mathbf{1}_{J_1<t}\right]\\
			&=\E_{x^*}\left[\P_{x^*}(t<J_N,Y^*_t=y^*\ |\ J_1)\mathbf{1}_{J_1<t}\right]\\
			\end{align*}

		\noindent then by $Y^*_{J_1}$, which is independent from $J_1$ and is equal to $z^*\neq x^*$ with probability $\dfrac{\L^*_{x^*,z^*}}{\L^*_{x^*}}$, and we use the homogeneity of $Y^*$:

			\begin{align*}
			&\P_{x^*}(Y^*_t=y^*,J_1<t< J_N)\\
			=&\E_{x^*}\left[\P_{x^*}(t<J_N,Y^*_t=y^*\ |\ J_1)\mathbf{1}_{J_1<t}\right]\\
			=&\int_{0}^{t}\L^*_{x^*}\mathrm{e}^{-s\L^*_{x^*}}\P_{x^*}(t<J_N,Y^*_t=y^*\ |\ J_1=s)\mathrm{d}s\\
			=&\int_{0}^{t}\sum_{z^*\in\mathcal{S}^*}\L^*_{x^*}\mathrm{e}^{-s\L^*_{x^*}}\P_{x^*}(t<J_N,Y^*_t=y^*\ |\ J_1=s,Y^*_{J_1}=z^*)\P_{x^*}(Y^*_{J_1}=z^*)\mathrm{d}s\\
			=&\int_{0}^{t}\sum_{z^*\neq x^*}\L^*_{x^*,z^*}\mathrm{e}^{-s\L^*_{x^*}}\P_{z^*}(Y^*_{t-s}=y^*,t-s\leq J_{N-1})\mathrm{d}s\\
			=&\int_{0}^{t}\sum_{z^*\neq x^*}\L^*_{x^*,z^*}\mathrm{e}^{-s\L^*_{x^*}}P_{z^*,y^*}^{*(N-1)}(t-s)\mathrm{d}s\\
			=&\int_{0}^{t}\sum_{z^*\neq x^*}\L^*_{x^*,z^*}\mathrm{e}^{-(t-s)\L^*_{x^*}}P_{z^*,y^*}^{*(N-1)}(s)\mathrm{d}s
			\end{align*}

		Regrouping the terms, we get:
			$$P_{x^*,y^*}^{*(N)}(t)=\exp(-\L^*_{x^*}t)\delta_{x^*,y^*}+\int_{0}^{t}\sum_{z^*\neq x^*}\L^*_{x^*,z^*}\mathrm{e}^{-(t-s)\L^*_{x^*}}P_{z^*,y^*}^{*(N-1)}(s)\mathrm{d}s$$
		so $Q_{x^*,\bar{y}}^{*(N)}(t)=f(Q_{x^*,\bar{y}}^{*(N-1)}(t))$. Since $f$ is non-decreasing, we get by induction $Q_{x^*,\bar{y}}^{*(N)}(t)\leq Q_{x^*,\bar{y}}(t)$. By monotone convergence theorem, $(Q_{x^*,\bar{y}}^{*(N)}(t))_N$ goes to $Q^*_{x^*,\bar{y}}(t)\leq Q_{x^*,\bar{y}}(t)$, which concludes the proof of (\ref{2.41}).
		
		When $\L^*$ is nonexplosive, $Q^*$ is a stochastic matrices, the inequality above implies the equality $Q^*=Q$ (since $Q$ is substochastic) and we can show (\ref{2.21}) by an easy induction on $k$ (see below). Unfortunately, this argument is no longer valid if $\L^*$ is explosive. The following reasoning shows the nonexplosive case first, then use it to show the explosive case. 

		\paragraph{Proof of (\ref{2.21}): }Recall that we want to prove by induction on $k$:
			$$\P(X^*_{t_0}=x^*_0,\dots,X^*_{t_k}=x^*_k,X_{t_k}=x_k)=\mu^*_0(x^*_0)\prod_{i=1}^{k}\mathbf{P}^*_{x^*_{i-1},x^*_i}(t_i-t_{i-1})\Lambda(x^*_k,x_k)$$

		$T$ is the explosion time of $X^*$ and actually that of $(X,X^*)$ since we assume that $X$ is nonexplosive. We recall (\ref{loi-initiale}):
			$$\bar{\mu}_0(x_0,x^*_0)=\mu^*_0(x^*_0)\Lambda(x^*_0,x_0)$$
		which gives us immediately the base case for $k=0$. Suppose now that (\ref{2.21}) is true for some $k\geq0$. Let $x^*_0,\dots,x^*_{k+1}\in\mathcal{S}^*$ and $x_{k+1}\in\mathcal{S}$. Then we have:

			\begin{align*}
			\P(X^*_{t_0}&=x^*_0,\dots,X^*_{t_{k+1}}=x^*_{k+1},X_{t_{k+1}}=x_{k+1})\\
			&=\sum_{x_k\in\mathcal{S}}\P(X^*_{t_0}=x^*_0,\dots,X^*_{t_{k}}=x^*_{k},X_{t_{k}}=x_{k})\\
			&\quad\quad\times\P(X^*_{t_{k+1}}=x^*_{k+1},X_{t_{k+1}}=x_{k+1}\ |\ X^*_{t_0}=x^*_0,\dots,X^*_{t_{k}}=x^*_{k},X_{t_{k}}=x_{k})\\
			&=\mu^*_0(x^*_0)\prod_{i=1}^{k}\mathbf{P}^*_{x^*_{i-1},x^*_i}(t_i-t_{i-1})\sum_{x_k\in\mathcal{S}}\Lambda(x^*_k,x_k)\bar{\mathbf{P}}_{\bar{x}_k,\bar{x}_{k+1}}(t_{k+1}-t_k)\\
			&\geq \mu^*_0(x^*_0)\prod_{i=1}^{k}\mathbf{P}^*_{x^*_{i-1},x^*_i}(t_i-t_{i-1})\mathbf{P}^*_{x^*_k,x^*_{k+1}}(t_{k+1}-t_k)\Lambda(x^*_{k+1},x_{k+1})\\
			&=\mu^*_0(x^*_0)\prod_{i=1}^{k+1}\mathbf{P}^*_{x^*_{i-1},x^*_i}(t_i-t_{i-1})\Lambda(x^*_{k+1},x_{k+1})
			\end{align*}

		\noindent where we have applied successively the law of total probability for the first equality, the Markovianity of $(X,X^*)$ and the induction hypothesis for the second one, and (\ref{2.41}) for the inequality.
	
		We have seen that if $\L^*$ is nonexplosive, the inequality is actually an equality and the induction is over. In particular, this shows that $\mathbf{P}^*(\cdot)$ is the semi-group of the marginal $X^*$ and consequently that $\L^*$ is its generator. The process $X^*$ has been built from the trajectories of $X$, independently from the explosiveness of its generator $\L^*$. Its jump rates are thus given by the coefficients $\L^*_{x^*,y^*},\;x^*,y^*\in\S^*$ and it follows that $\L^*$ is also the generator of the minimal process $X^*$ without the assumption of nonexplosiveness

		Let us assume now that $\L^*$ is explosive. The first and last expressions in the computation above are both positive for any choice of $x^*_i$, $x_{k+1}$. Now, their sums in all possible $x^*_0,\dots,x^*_{k+1},x_{k+1}$ are both equal to $1-\P(T\leq t_{k+1})$ (since $\mathbf{P}^*(\cdot)$ is the minimal semi-group associated to $\L^*$), which shows the equality and ends the induction.

		\end{proof}

		\begin{cor}\label{cor-tps-darret}
		If the condition (\ref{dual-gen}) holds, then the process $(X,X^*)$ also satisfies:

			\begin{equation}
			\l(X_{\tau}\ |\ \mathcal{F}^{X^*}_{\tau},\;\tau<T)=\Lambda(X^*_{\tau},\cdot)
			\end{equation}

		\noindent for every $(\mathcal{F}^{X^*}_t)$-stopping time $\tau$.
		\end{cor}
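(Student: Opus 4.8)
The plan is to extend relation (\ref{indep-1}), which holds at every deterministic time, to an arbitrary $(\mathcal{F}^{X^*}_t)$-stopping time by the standard two-step optional-stopping argument: first for stopping times with countable range, then for general ones by dyadic approximation from above. First I would recast the conclusion in integral form. Observe that $\{\tau<T\}=\{X^*_\tau\neq\infty\}\in\mathcal{F}^{X^*}_\tau$, so conditioning on this event is legitimate, and the assertion is equivalent to proving that for every $x\in\S$ and every bounded $\mathcal{F}^{X^*}_\tau$-measurable random variable $Z$,
	\begin{equation*}
	\E\left[\ind_{\{X_\tau=x\}}\,Z\,\ind_{\{\tau<T\}}\right]=\E\left[\Lambda(X^*_\tau,x)\,Z\,\ind_{\{\tau<T\}}\right].
	\end{equation*}
By a monotone class argument it suffices to treat $Z=\ind_A$ with $A\in\mathcal{F}^{X^*}_\tau$.

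I would first handle a stopping time $\tau$ with values in a countable set $\{t_1<t_2<\cdots\}\cup\{\infty\}$. For $A\in\mathcal{F}^{X^*}_\tau$ the definition of the stopped $\sigma$-algebra gives $A\cap\{\tau=t_n\}=\big(A\cap\{\tau\le t_n\}\big)\setminus\big(A\cap\{\tau\le t_{n-1}\}\big)\in\mathcal{F}^{X^*}_{t_n}$, so that $W_n:=\ind_A\,\ind_{\{\tau=t_n\}}$ is $\mathcal{F}^{X^*}_{t_n}$-measurable. On $\{\tau=t_n\}$ we have $X_\tau=X_{t_n}$, $X^*_\tau=X^*_{t_n}$ and $\{\tau<T\}=\{t_n<T\}$, so applying (\ref{indep-1}) at the deterministic time $t_n$ with the test variable $W_n$ yields
	\begin{equation*}
	\E\left[\ind_{\{X_{t_n}=x\}}\,W_n\,\ind_{\{t_n<T\}}\right]=\E\left[\Lambda(X^*_{t_n},x)\,W_n\,\ind_{\{t_n<T\}}\right].
	\end{equation*}
Summing over $n$ gives the desired identity for $\tau$; the value $\infty$ contributes nothing since $\{\tau=\infty\}\cap\{\tau<T\}=\emptyset$.

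For a general stopping time $\tau$ I would use the dyadic approximation from above $\tau_n:=2^{-n}\lceil 2^n\tau\rceil$, with $\tau_n=\infty$ on $\{\tau=\infty\}$. Each $\tau_n$ is a stopping time with countable range, the sequence decreases monotonically to $\tau$, and $\tau\le\tau_n$ gives $\mathcal{F}^{X^*}_\tau\subseteq\mathcal{F}^{X^*}_{\tau_n}$; hence the discrete case applies to $\tau_n$ with the very same $Z=\ind_A$. Since $\tau_n\downarrow\tau$, the events $\{\tau_n<T\}$ increase with union $\{\tau<T\}$. On $\{\tau<T\}$ neither $X$ nor $X^*$ has exploded, and being càdlàg with values in a discrete state space they are constant on a right-neighbourhood of $\tau$; therefore $X_{\tau_n}=X_\tau$ and $X^*_{\tau_n}=X^*_\tau$ for all $n$ large enough, almost surely on $\{\tau<T\}$. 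Passing to the limit $n\to\infty$ in the identity obtained for $\tau_n$, by dominated convergence (the two integrands are dominated by $|Z|$ and by $|Z|$ times $1$, using $\Lambda(\cdot,x)\le 1$), yields the announced equality for $\tau$.

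The main obstacle is the control near the explosion time $T$: one must verify carefully that the conditioning event is compatible with the approximation, namely that $\{\tau_n<T\}\uparrow\{\tau<T\}$, and that right-continuity combined with the discreteness of $\S$ and $\S^*$ genuinely forces $X_{\tau_n}$ and $X^*_{\tau_n}$ to stabilise at $X_\tau$ and $X^*_\tau$ on $\{\tau<T\}$. This is precisely where the càdlàg structure of $(X,X^*)$ and the absence of explosion strictly before $\tau$ enter, and is the only point requiring more than a routine verification.
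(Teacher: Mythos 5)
Your proof is correct and follows essentially the same route as the paper's: approximation of $\tau$ from above by countable-range stopping times (the paper uses $\tau_n=\frac{1}{n}\lceil n\tau\rceil$, you use the dyadic variant), application of (\ref{indep-1}) at each deterministic grid time with an $\mathcal{F}^{X^*}$-measurable test variable, and passage to the limit by dominated convergence using $\{\tau_n<T\}\uparrow\{\tau<T\}$. The one caveat is your appeal to \emph{discreteness} of the state spaces to stabilise $X_{\tau_n}$ and $X^*_{\tau_n}$: the paper explicitly allows non-discrete topologies on $\S$ and $\S^*$, so the stabilisation should instead be justified by the piecewise-constant trajectories of the minimal jump process strictly before explosion (the paper's version sidesteps this by invoking right-continuity of the trajectories together with continuity of $f$ and of $\Lambda[f]$, which is why it tests against continuous bounded $f$ rather than point indicators).
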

		
		Here, the inequality $\tau<T$ has to be understood in $\bar{\R}$. In particular, $T$ and $\tau$ might be infinite but the event $\{\tau<T\}$ implies $\{\tau<\infty\}$.

		\begin{proof}
		Let $\tau$ be a $(\mathcal{F}^{X^*}_t)$-stopping time, $f\colon\mathcal{S}\rightarrow\R$ be measurable, continuous and bounded and $G$ be a real-valued random variable, measurable with respect to $\mathcal{F}^{X^*}_{\tau}$ and bounded. By definition of the conditional distribution, we want to show:

			\begin{equation}
			\E(G\ind_{\tau<T}f(X_{\tau}))=\E(G\ind_{\tau<T}\Lambda[f](X^*_{\tau}))
			\end{equation}

		For all $n\in\N$, we set $\tau_n:=\dfrac{1}{n}\lceil n\tau\rceil$, where $\lceil\cdot\rceil$ denote the ceiling function ($\lceil x\rceil$ is the smallest integer greater than or equal to $x$). We get:
			\begin{align*}
			\{\tau_n\le t\}=&\{\lceil n\tau\rceil\le nt\}\\
			=&\underset{m\in\N\atop m\le nt}{\bigcup}\{m=\lceil n\tau\rceil\}\\
			=&\underset{m\in\N\atop m\le nt}{\bigcup}\{m-1<n\tau\leq m\}\\
			=&\underset{m\in\N\atop \frac{m}{n}\le t}{\bigcup}\{\dfrac{m-1}{n}<\tau\leq \dfrac{m}{n}\}\in\F^{X^*}_t
			\end{align*}
		so $\tau_n$ is a $(\F^{X^*}_t)$-stopping time. $\tau_n$ takes values in $\dfrac{1}{n}\N\cup\{\infty\}$, and $\tau_n=k/n$ if and only if $\dfrac{k-1}{n}<\tau\leq\dfrac{k}{n}$, so:
			$$G\ind_{\tau_n=k/n}=G\ind_{\tau\leq k/n}-G\ind_{\tau\leq (k-1)/n}$$
		which is measurable with respect to $\F^{X^*}_{k/n}$.
		
		We recall that (\ref{indep-1}) holds, namely:
			$$\forall t>0,\quad\l(X_t\ |\ \mathcal{F}^{X^*}_t,t<T)=\Lambda(X^*_t,\cdot)$$
		or in other words, by definition, for every function $h\colon\mathcal{S}\rightarrow\R$ measurable, continuous and bounded, every real-valued random variable $H$ measurable with respect to $\mathcal{F}_t^{X^*}$ and bounded, and every $t>0$:
			$$\E(H\ind_{t<T}h(X_t))=\E(H\ind_{t<T}\Lambda[h](X^*_t))$$
		Hence:

			\begin{align*}
			\E(\ind_{\tau_n<T}Gf(X_{\tau_n}))&=\E\left(\sum_{k=0}^{\infty}\ind_{k/n<T}\ind_{\tau_n=k/n}Gf(X_{k/n})\right)\\
			&=\sum_{k=0}^{\infty}\E\left(\ind_{k/n<T}\ind_{\tau_n=k/n}Gf(X_{k/n})\right)\\
			&=\sum_{k=0}^{\infty}\E\left(\ind_{k/n<T}\ind_{\tau_n=k/n}G\Lambda[f](X^*_{k/n})\right)\\
			&=\E\left(\sum_{k=0}^{\infty}\ind_{k/n<T}\ind_{\tau_n=k/n}G\Lambda[f](X^*_{k/n})\right)\\
			&=\E\left(\ind_{\tau_n<T}G\Lambda[f](X^*_{\tau_n})\right)
			\end{align*}

		Now, almost surely $\lim\limits_{n\rightarrow\infty}f(X_{\tau_n})=f(X_\tau)$ by right continuity of the trajectories of $(X,X^*)$ and continuity of $f$, $\lim\limits_{n\rightarrow\infty}\ind_{\tau_n<T}=\ind_{\tau<T}$ and $\lim\limits_{n\rightarrow\infty}\Lambda[f](X^*_{\tau_n})=\Lambda[f](X^*_\tau)$ by continuity of $\Lambda[f]$, so we conclude using dominated convergence:

			\begin{align*}
			\E(G\ind_{\tau<T}f(X_{\tau}))&=\lim\limits_{n\rightarrow\infty}\E\left(\ind_{\tau_n<T}Gf(X_{\tau_n})\right)\\
			&=\E(G\ind_{\tau<T}\Lambda[f](X^*_{\tau}))
			\end{align*}
		
		\end{proof}

	\subsection{Duality after explosion}\label{sec-explo}

	So far, we have focused on minimal processes, without considering what may happen after explosion. This is not enough in general for the $\Lambda$-linked process constructed previously to be a strong stationary dual for $X$.

	Indeed, let us consider the simple example where $X$ is a positive recurrent birth-death process on $\Z$. Proceeding like Diaconis and Fill \cite{Diaco}, we want to construct a dual $X^*$ whose states are the intervals of $\Z$, and linked to $X$ by a kernel $\Lambda$ which is the stationary distribution restricted to intervals. In that case, the state $\left[-\infty,+\infty\right]$ cannot be reached by $X^*$ in finitely many jumps. Since the time when this occurs is the wanted strong stationary time for $X$, we would like it to be finite, so we need to investigate what happens after explosion. This example will be studied in more details in the next section.

	\paragraph{}We are now going to specify a little more our framework. We consider a countable topological space $E$ which, for convenience, we will assume to be separated.

		\begin{defi}\label{def-strat}
		We say that a generator $\hat{\L}$ on $E$ is \emph{stratified} if there exists $N\in \N$ and a $N$-tuple $(E_i)_{i\in\dbc{1,N}}$ of subsets of $E$, called \emph{stratification} of $\hat{\L}$, such that:
  	
			\begin{itemize}
			\item The $E_i$ are mutually disjoint.
			\item $\forall i\in\dbc{1,N},\ \forall x\in E_i$, if $(Y_t^x)_{0\leq t\leq\tau}$ is a minimal process starting at $x$ with generator $\hat{\L}$, and explosion time $\tau$, then:
				$$\P(Y^x_t\in\bigsqcup_{j=1}^i E_j\ |\;t<\tau)=1$$
			\item If in addition $\tau<+\infty$, then almost surely there exists $j\leq i$ and $\varepsilon$ such that $Y_t^x\in E_j,\ \forall t\in\left]\tau-\varepsilon,\tau\right[$, and $\lim\limits_{t\rightarrow\tau}Y_t^x$ exists and is in $\bigsqcup_{k=j+1}^N E_k$. This limit will be denoted by $Y_{\tau}^x$.
			\end{itemize}
		\end{defi}
  	
  	This definition may be understood as follows: a minimal process whose generator is stratified can jump from one stratum to another only if the rank of the arrival stratum is lower than or equal to that of the departing one. On the other hand, at the explosion time the process must have a limit in a stratum with rank (strictly) greater than that of the stratum where it was right before. Naturally, we say that the process at the explosion time is equal to this limit, and starts again from there. That is done in the following construction.

		\begin{rem}
		To satisfy the second condition, we must have $\hat{\L}_{y,z}=0$ as soon as $y\in E_j,\; z\in E_k,\; k>j$. The $Q$-matrix $(\hat{\L}_{x,y})_{(x,y)\in E^2}$ is then a lower triangular block matrix, each block corresponding to a different stratum.
		\end{rem}

		Let $\hat{\L}$ be a stratified generator on $E$, $(E_i)_{i\in\dbc{1,N}}$ a stratification of $\hat{\L}$, with $N\in\N$, and $\nu$ a probability measure on $E$. We construct a non-minimal process $Z$ with generator $\hat{\L}$ and initial distribution $\nu$ as follows:
		
			\begin{itemize}
					
			\item We set $T_0=0$ and $Z_{T_0}=x$ with probability $\nu(x)$.
					
			\item By induction on $i\geq0$, we assume $Z$ to be constructed up to the time $T_i$. If $T_i=\infty$, the construction is over. Otherwise, we simulate a minimal process $Y^{(i)}$ with generator $\hat{\L}$ and starting from $Z_{T_i}$, independently from $Z_{[0,T_i]}$ given $Z_{T_i}$. Let $T'_{i+1}$ be its explosion time, we set:
				$$Z_{T_i+t}=Y_t^{(i)},\quad \forall t<T'_{i+1}$$
				$$T_{i+1}=T_i+T'_{i+1}$$
				$$Z_{T_{i+1}}=Y_{T'_{i+1}}^{(i)}\quad\text{if }T_{i+1}<\infty$$
					
			\end{itemize}

		\begin{defi}
		We call \emph{startified jump process} (resp. \emph{stratified semi-group}) \emph{associated to $(\nu,\hat{\L})$} any process $Z$ constructed as above (resp. its transition function). This process is defined up to a \emph{double explosion time}, possibly finite:
			$$T:=\lim\limits_{i\rightarrow+\infty}T_i$$

		As for minimal processes, we extend it after this double explosion time by setting $X_t=\Delta$ for every $t\geq T$, where $\Delta$ is a cemetery point not in $E$.
		
		\end{defi}
		By construction, such a process satisfies the strong Markov property, and by Anderson \cite{anderson} Proposition 1.2.7, its transition function is solution of the backward equation for $\hat{\L}$ (but may not be solution of the forward equation).

		\begin{rem}
		We could also define the process after the double explosion time by starting again from the limit, up to a triple explosion time, and so on through multiple explosion times, provided that the limits exist. We will see that for our purpose this is not necessary.
      
		At each explosion time, the process goes from one stratum $E_i$ toward another one $E_j$, with $j>i$, and such a transition is only possible after an explosion.
      
		\end{rem}

	The next proposition fully justifies the construction we have just made:
		
		\begin{prop}\label{martingale}
		Let $\hat{\L}$ be a stratified generator on $E$ and $Z$ a stratified jump process relative to $\hat{\L}$, with explosion times $T_n,\;n\in\N$. For every bounded, continuous function $f:E\rightarrow\R$, define:
			$$M^f_t=f(Z_{t})-f(Z_0)-\int_{0}^{t}\hat{\L}f(Z_s)\mathrm{d}s$$
		If $\hat{\L}f$ is bounded then for all $n\in\N,\;(M^f_{t\wedge T_n})_{t>0}$ is a martingale with respect to the filtration $(\F^{Z}_t)_{t>0}$. In particular, if the double-explosion time is infinite almost surely, $M^f$ is a martingale.
		\end{prop}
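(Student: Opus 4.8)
The plan is to build $M^f$ out of the martingales attached to the individual minimal excursions $Y^{(i)}$ from which $Z$ is assembled, and to paste these together across the explosion times $T_i$ by means of the strong Markov property. Throughout, the two boundedness hypotheses do the heavy lifting: since $f$ and $\hat{\L}f$ are bounded, one has $|M^f_{t\wedge T_n}|\le 2\|f\|_\infty+t\,\|\hat{\L}f\|_\infty$, so each stopped process is uniformly bounded, hence integrable and, for fixed $t$, dominated; this legitimizes every limit exchange below. Adaptedness to $(\F^Z_t)$ is immediate from the construction.

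First I would treat a single minimal process. Let $Y$ be a minimal process with generator $\hat{\L}$, jumping times $(\sigma_k)_{k\in\N}$ and explosion time $\sigma=\lim_k\sigma_k$, and set $M_t=f(Y_t)-f(Y_0)-\int_0^t\hat{\L}f(Y_s)\,\mathrm{d}s$. The martingale identity recalled in Section~\ref{sec-markov} gives that $M_{t\wedge\sigma_k}$ is a martingale for each $k$, and the goal is to let $k\to\infty$ so as to obtain that $M_{t\wedge\sigma}$ is a martingale. Fix $t$. On $\{t<\sigma\}$ one has $t\wedge\sigma_k=t$ for $k$ large, so $f(Y_{t\wedge\sigma_k})=f(Y_t)$ eventually; on $\{\sigma\le t\}$ (which forces $\sigma<\infty$) one has $t\wedge\sigma_k=\sigma_k\to\sigma$, and the third condition of Definition~\ref{def-strat} guarantees that $Y_{\sigma_k}$ converges to the limit point $Y_\sigma$, whence $f(Y_{\sigma_k})\to f(Y_\sigma)=f(Y_{t\wedge\sigma})$ since $f$ is continuous. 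Simultaneously $\int_0^{t\wedge\sigma_k}\hat{\L}f(Y_s)\,\mathrm{d}s\to\int_0^{t\wedge\sigma}\hat{\L}f(Y_s)\,\mathrm{d}s$ by dominated convergence, $\hat{\L}f$ being bounded. Hence $M_{t\wedge\sigma_k}\to M_{t\wedge\sigma}$ almost surely and, by uniform boundedness, in $L^1$, so the martingale property survives the passage to the limit.

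Next I would prove the statement by induction on $n$. The case $n=1$ is exactly the previous step applied to the first excursion $Y^{(0)}$ started at $Z_0$, since $T_1=T'_1$ and $Z_t=Y^{(0)}_t$ on $[0,T_1[$. For the inductive step, assume $M^f_{t\wedge T_n}$ is a martingale and consider the increment $N_t:=M^f_{t\wedge T_{n+1}}-M^f_{t\wedge T_n}$, which vanishes for $t\le T_n$ and, for $t>T_n$, equals $f(Z_{t\wedge T_{n+1}})-f(Z_{T_n})-\int_{T_n}^{t\wedge T_{n+1}}\hat{\L}f(Z_s)\,\mathrm{d}s$. By the strong Markov property at $T_n$, conditionally on $\F^Z_{T_n}$ the shifted process $(Z_{T_n+u})_u$ is, up to $T_{n+1}$, a fresh minimal process started at $Z_{T_n}$ with explosion time $T'_{n+1}=T_{n+1}-T_n$; thus $N_{T_n+u}$ coincides with the step-one martingale of this excursion evaluated at $u\wedge T'_{n+1}$. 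This makes $N$ a martingale, and therefore $M^f_{t\wedge T_{n+1}}=M^f_{t\wedge T_n}+N_t$ is a sum of two martingales. Finally, if the double explosion time $T=\lim_n T_n$ is infinite almost surely, then $t\wedge T_n\uparrow t$ and $M^f_{t\wedge T_n}\to M^f_t$ almost surely and in $L^1$ (again by uniform boundedness), so $M^f$ itself is a martingale.

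The main obstacle is the limit passage in the single-excursion step: it is the only place where the topology of $E$ and the precise content of the stratification are used, and it is exactly what forces the hypotheses that $f$ be continuous and $\hat{\L}f$ be bounded. The existence of $Y_\sigma$ as a genuine limit in $E$ (the third bullet of Definition~\ref{def-strat}) is what allows $f(Y_{\sigma_k})$ to converge at an explosion; without it the stopped process would have no well-defined terminal value there. Everything else — integrability, adaptedness, and the pasting across the $T_i$ — is routine once the strong Markov property, already granted by the construction, is invoked.
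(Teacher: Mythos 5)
Your proof is correct and follows essentially the same route as the paper: an induction over the explosion times $T_n$, using the standard martingale identity for the minimal process stopped at its jumping times $\tau_k$, passing to the limit $k\to\infty$ by dominated convergence (justified by the boundedness of $f$ and $\hat{\L}f$), and pasting the excursions together by conditioning at $T_n$ (your strong-Markov increment $N_t$ is exactly the paper's shifted process $Y_t=Z_{T_n+t}$ combined with the tower property at $\F_{T_n}$). If anything, your explicit use of the third bullet of Definition~\ref{def-strat} and the continuity of $f$ to get $f(Y_{\sigma_k})\to f(Y_\sigma)$ at an explosion makes a step precise that the paper leaves implicit.
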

		
		\begin{proof}
		We prove by induction on $i$ that the processes $(M^f_{t\wedge T_i})_{t\geq0}$ are martingales. For $i=0$, this is trivial $(T_0=0)$. Let us assume then that this is true for some $i\geq0$. We set:
			$$Y_t=\begin{cases}
			Z_{T_i+t}\quad\text{if }T_i+t<T_{i+1}\\
			\Delta\quad\text{otherwise}
			\end{cases}$$
		Then $Y$ is a minimal process on $E$ with generator $\hat{\L}$. Call $\tau_n$ its $n$-th jumping time, $n\in\N$, we get for all $0<s<t$:
			
			\begin{align*}
			\E(M^f_{T_i+t\wedge\tau_n}-M^f_{T_i+s\wedge\tau_n}\ |\ \F^Z_{T_i+s\wedge\tau_n})&=\E(M^f_{T_i+t\wedge\tau_n}-M^f_{T_i+s\wedge\tau_n}\ |\ Z_{T_i+s\wedge\tau_n})\\
			&=\E(f(Y_{t\wedge\tau_n})-f(Y_{s\wedge\tau_n})-\int_{s\wedge\tau_n}^{t\wedge\tau_n}\hat{\L}f(Y_r)\mathrm{d}r\ |\ Y_{s\wedge\tau_n})\\
			&=0
			\end{align*}
		
		\noindent since $Y$ is a minimal process with generator $\hat{\L}$.
		
		Making $n$ go to infinity, we get by dominated convergence theorem that the process $(M^f_{(T_i+t)\wedge T_{i+1}})_{t\geq0}$ is a martingale. Thus:
			
			\begin{align*}
			\E(M^f_{t\wedge T_{i+1}}-M^f_{s\wedge T_{i+1}}\ |\ \F_s)&=\E(M^f_{t\wedge T_{i+1}}-M^f_{s\wedge T_{i+1}}\ |\ \F_s)\;\ind_{s<T_i}\\
			&=\E(\E(M^f_{t\wedge T_{i+1}}-M^f_{s\wedge T_{i+1}}\ |\ \F_{T_i})\ |\ \F_s)\;\ind_{s<T_i}\\
			&=\E(\E(M^f_{t\wedge T_{i+1}}\ |\ \F_{T_i})-M^f_{s\wedge T_{i+1}}\ |\ \F_s)\;\ind_{s<T_i}\\
			&=\E(M^f_{T_i}\ind_{t>T_i}+M^f_{t}\ind_{t<T_i}-M^f_{s\wedge T_{i+1}}\ |\ \F_s)\;\ind_{s<T_i}\\
			&=\E(M^f_{t\wedge T_i}-M^f_{s\wedge T_i}\ |\ \F_s)\;\ind_{s<T_i}\\
			&=0
			\end{align*}
		
		\noindent If $T_i\underset{i\rightarrow \infty}{\rightarrow}\infty$, we use dominated convergence once again to get that $(M^f_{t})_{t\geq0}$ is a martingale.
		\end{proof}

	\paragraph{}From now on we assume that:

		\begin{itemize}
		
		\item $\S$ is a discrete topological space.
		
		\item There exists a stratification $(\S^*_i)_{i\in\dbc{1,N}},\ N\in\N$ of $\L^*$ on $\S^*$. We write $P^*(\cdot)$ for the stratified semi-group associated to $\L^*$. 
		
		\item The double-explosion time of any stratified jump process associated to $\L^*$ is infinite almost surely. The matrix $(P^*_{x^*,y^*}(t))_{x^*,y^*\in\S^*}$ is then stochastic for all $t>0$.
				
		\item For all $x\in\S,\;\S^*\ni x^*\rightarrow\Lambda(x^*,x)$ is continuous.
		
		\end{itemize}

		\begin{Lem}
		Under these hypotheses, $(\S\times \S^*_i)_{i\in\dbc{1,N}}$ is a stratification of $\bar{\L}$ on $\S\times\S^*$ endowed with the product topology.
		\end{Lem}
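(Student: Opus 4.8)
The plan is to verify directly the three requirements of Definition \ref{def-strat} for $\bar{\L}$, the minimal process living on $\bar{\mathcal{S}}$, with the strata $E_i:=(\mathcal{S}\times\mathcal{S}^*_i)\cap\bar{\mathcal{S}}$; the first requirement (pairwise disjointness) is immediate since the $\mathcal{S}^*_i$ are disjoint. Two structural facts read off from (\ref{gen-couple}) will drive everything else. First, the total rate at which the first coordinate moves from $x$ to $y\ne x$ equals $\L_{x,y}$ independently of $x^*$: when $\Lambda(x^*,y)>0$ this is the single term $\bar{\L}_{(x,x^*),(y,x^*)}=\L_{x,y}$, while when $\Lambda(x^*,y)=0$ it comes from the simultaneous transitions, whose rates sum to $\L_{x,y}\sum_{y^*}\L^*_{x^*,y^*}\Lambda(y^*,y)/\Gamma(x^*,y)=\L_{x,y}$. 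By lumpability the first marginal of a minimal $\bar{\L}$-process is thus a minimal $\L$-process, hence nonexplosive. Second, the second coordinate can only jump (cases three and four of (\ref{gen-couple})) to a $y^*$ with $\L^*_{x^*,y^*}>0$; since $(\mathcal{S}^*_i)$ stratifies $\L^*$, the remark following Definition \ref{def-strat} gives $\L^*_{x^*,y^*}=0$ whenever $y^*$ sits in a stratum of index strictly larger than that of $x^*$. Hence every jump of the second coordinate keeps its stratum index non-increasing, which is precisely the second requirement: from $\bar{x}=(x,x^*)$ with $x^*\in\mathcal{S}^*_i$ the pair stays in $\bigsqcup_{j\le i}(\mathcal{S}\times\mathcal{S}^*_j)$ until explosion.

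For the third requirement, fix a trajectory $(X_t,X^*_t)_{0\le t<\tau}$ with finite explosion time $\tau$. Since the first marginal is a nonexplosive $\L$-process it makes only finitely many jumps on $[0,\tau]$; letting $\sigma<\tau$ be the last of them, $X_t\equiv x_0$ on $]\sigma,\tau[$. Next, the stratum index of $X^*_t$ is, by the second structural fact, a non-increasing function of $t$ with values in the finite set $\dbc{1,N}$, so it is eventually constant, equal to some $j\le i$; thus there is $\varepsilon>0$ with $X^*_t\in\mathcal{S}^*_j$, whence $(X_t,X^*_t)\in E_j$, on $]\tau-\varepsilon,\tau[$. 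On the overlap $]\max(\sigma,\tau-\varepsilon),\tau[$ the first coordinate is frozen at $x_0$ and, from case three of (\ref{gen-couple}), the second coordinate evolves inside $\mathcal{S}^*_j\cap\{y^*:\Lambda(y^*,x_0)>0\}$ as the minimal process for the $h$-transform $\L^{*,x_0}_{x^*,y^*}=\L^*_{x^*,y^*}\Lambda(y^*,x_0)/\Lambda(x^*,x_0)$ of $\L^*$. Granting that this $h$-transformed process converges, upon exploding at $\tau$ while confined to $\mathcal{S}^*_j$, to a point $y^*_\infty\in\bigsqcup_{k>j}\mathcal{S}^*_k$, the product-topology limit $\lim_{t\to\tau}(X_t,X^*_t)=(x_0,y^*_\infty)$ then exists and lies in $\bigsqcup_{k>j}E_k$, which is the third requirement.

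The crux, and the step I expect to be the main obstacle, is precisely this last convergence claim, i.e.\ that $\L^{*,x_0}$ inherits the third requirement of the stratification from $\L^*$. I would phrase it as: $\L^{*,x_0}$ is itself stratified with the same strata. Because $x^*\mapsto\Lambda(x^*,x_0)$ is continuous and strictly positive on the relevant set, the off-diagonal rates of $\L^{*,x_0}$ are positive exactly where those of $\L^*$ are, so the two minimal processes admit the same family of jump-chain trajectories; requirements one and two transfer verbatim, and continuity of $\Lambda$ is also what guarantees the limit point $(x_0,y^*_\infty)$ stays in $\bar{\mathcal{S}}$. The genuinely delicate point is that the existence of a limit at explosion is a \emph{tail} property of the jump chain, and therefore cannot be transported from $\L^*$ to $\L^{*,x_0}$ by the mere mutual absolute continuity of the two laws on finitely many jumps (which can fail on the tail $\sigma$-algebra). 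I would circumvent this by proving that, within a single stratum, convergence at explosion holds not merely almost surely but for \emph{every} admissible escaping trajectory: infinitely many jumps in finite time force the chain to leave every finite subset of $\mathcal{S}^*_j$, and the compactification underlying $\mathcal{S}^*$ is built so that such an escaping sequence has a unique limit in a strictly higher stratum. Being a statement about the common set of admissible trajectories, it then holds for $\L^{*,x_0}$ as well, which closes the argument.
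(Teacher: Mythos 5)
Your treatment of the first two requirements is sound, and your observation that nonexplosiveness of $\L$ freezes the first coordinate at some $x_0$ after its last jump before the explosion time $\tau$ is exactly the right ingredient for the third. The genuine gap is the final transfer step, at the place you yourself flagged, and your proposed repair does not work. The pathwise strengthening you invoke --- that \emph{every} admissible escaping trajectory inside a stratum converges, so that the property passes from $\L^*$ to $\L^{*,x_0}$ through the common set of jump-chain trajectories --- is not among the hypotheses (Definition \ref{def-strat} is an almost-sure statement about the minimal process), and it is false in the situations where the lemma is applied. Already its first half fails pathwise: infinitely many jumps in finite time do not force the jump chain to leave every finite set; that is itself only an almost-sure consequence of positive holding rates. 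And even among genuinely escaping paths, uniqueness of the limit fails: for the interval-valued dual of Section \ref{sec-marche-Z}, the jump chain can let the right endpoint increase to $+\infty$ while the left endpoint makes ever longer excursions between $0$ and $-n_k$ with $n_k\to\infty$; every individual transition has positive rate, the path eventually leaves every finite subset of $\mathbf{E}_1$, yet in the Hausdorff topology it accumulates both at $(0,+\infty)$ and at $\Z$, so it has no limit. Consistently, in Section \ref{sec-marche-graphe} the paper proves uniqueness of the limit point of the jump chain only \emph{almost surely}, by bounding the rate of a transition that a doubly-accumulating path would have to perform infinitely often. So convergence at explosion for $\L^{*,x_0}$ cannot be obtained by trajectory inclusion, and your argument does not close. (A smaller inaccuracy along the way: the off-diagonal rates of $\L^{*,x_0}$ are positive where $\L^*_{x^*,y^*}>0$ \emph{and} $\Lambda(y^*,x_0)>0$, so the two generators do not even share the same family of admissible jump-chain steps.)

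The paper's own proof is one line and avoids the conditional generator $\L^{*,x_0}$ entirely: in the proof of Theorem \ref{cond-dual} it is established, via identity (\ref{2.21}), that the second marginal $X^*$ of the coupled minimal process is \emph{equal in law} to a minimal process with generator $\L^*$ --- not merely locally absolutely continuous with respect to one --- for the intertwined initial laws $\bar{\mu}_0(x,x^*)=\mu_0^*(x^*)\Lambda(x^*,x)$ that are the only ones ever used, and which relation (\ref{dual-explo}) shows are reproduced at each restart after explosion. Equality of laws transfers tail events, where absolute continuity on each $\F^{X^*}_{\tau_n}$ does not; hence the almost-sure confinement of $X^*$ in $\bigsqcup_{j\le i}\S^*_j$ before explosion and the almost-sure existence of $\lim_{t\to\tau}X^*_t$ in a strictly higher stratum hold verbatim for the marginal, no comparison of generators needed. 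Combined with your frozen-coordinate step, this yields $\lim_{t\to\tau}(X_t,X^*_t)=(x_0,\lim_{t\to\tau}X^*_t)$ in the product topology, in a stratum of strictly higher index, which is the third requirement. The fix, then, is to delete the $h$-transform detour and quote the marginal-law identity from Theorem \ref{cond-dual} instead.
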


		\begin{proof}
	
		This follows trivially from the fact that $\L$ is non-explosive and that the marginals of a minimal process $(X,X^*)$ with generator $\bar{\L}$ have generators $\L$ and $\L^*$ respectively.
  		
		\end{proof}

	\paragraph{}The next theorem will be an indispensable tool to construct strong stationary duals on countable spaces.

		\begin{thm}\label{cond-dual2}
		Under the above hypotheses, if:

			$$\begin{cases}
			\mu_0^*\Lambda=\mu_0\\
			\L^*\Lambda=\Lambda\L
			\end{cases}$$

		\noindent then there exists a stratified jump process $(X_t^*)_{t>0}$ on $\mathcal{S}^*$ with generator $\L^*$ and initial distribution $\mu_0^*$, such that for all $t\geq0$:

			\begin{gather}
			\l(X_t\ |\ \mathcal{F}^{X^*}_t)=\Lambda(X^*_t,\cdot)\label{indep-3}\\	
			\l(X^*_t\ |\ X)=\l(X^*_t\ |\ \mathcal{F}^X_t)\label{indep-4}
			\end{gather}

		\end{thm}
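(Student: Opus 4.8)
The goal is to upgrade Theorem~\ref{cond-dual}, which produces a \emph{minimal} $\Lambda$-linked dual only up to its explosion time $T$, into a statement about a \emph{stratified} (non-minimal) dual for which the intertwining~(\ref{indep-3}) holds for \emph{all} $t\geq0$, with no restriction to $\{t<T\}$. The natural strategy is to run the stratified jump-process construction from Section~\ref{sec-explo} on the coupled generator $\bar{\L}$: by the Lemma just proved, $(\S\times\S^*_i)_{i\in\dbc{1,N}}$ is a stratification of $\bar{\L}$, so we may build a stratified jump process $(X,X^*)$ associated to $(\bar{\mu}_0,\bar{\L})$, where $\bar{\mu}_0(x_0,x_0^*)=\mu_0^*(x_0^*)\Lambda(x_0^*,x_0)$ is the initial law from~(\ref{loi-initiale}). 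First I would verify that the $X$-marginal of this process is genuinely distributed as a minimal process with generator $\L$ (hence equals the given $X$), using nonexplosiveness of $\L$: each excursion between successive double-explosion times $T_i$ is a minimal $\bar{\L}$-process whose $X$-marginal is minimal with generator $\L$, and since $\L$ never explodes the $X$-coordinate simply concatenates continuously across the $T_i$, never being sent to $\Delta$.

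\textbf{Extending the intertwining past explosion.}
The key step is to show~(\ref{indep-3}) holds for all $t$. The idea is to glue together the intertwining relations that Theorem~\ref{cond-dual} already supplies on each excursion. On the excursion $[T_i,T_{i+1})$, the process restarts as a minimal $\bar{\L}$-process from $(X,X^*)_{T_i}$, and Theorem~\ref{cond-dual} (or rather Corollary~\ref{cor-tps-darret}, applied at the stopping times $T_i$) gives the $\Lambda$-link conditionally on $\{t<T_{i+1}\}$ within that excursion. The clean way to run the induction is to prove the finite-dimensional analogue of~(\ref{2.21}) for the stratified semi-group: for all $k$, all $0=t_0\leq\dots\leq t_k$, and all states,
	\begin{equation*}
	\P(X^*_{t_0}=x^*_0,\dots,X^*_{t_k}=x^*_k,\;X_{t_k}=x_k)=\mu^*_0(x^*_0)\prod_{i=1}^{k}P^*_{x^*_{i-1},x^*_i}(t_i-t_{i-1})\Lambda(x^*_k,x_k),
	\end{equation*}
now with the \emph{stratified} semi-group $P^*(\cdot)$ in place of the minimal $\mathbf{P}^*(\cdot)$. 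Because the double-explosion time is a.s.\ infinite, $P^*(\cdot)$ is stochastic, so the substochastic-to-stochastic normalization argument at the end of the proof of Theorem~\ref{cond-dual} applies verbatim and turns the inequality coming from~(\ref{2.41}) into an equality. That identity immediately yields both~(\ref{indep-3}) and~(\ref{indep-4}): summing out $x_k$ shows $X^*$ is Markov with semi-group $P^*(\cdot)$, and conditioning $X_{t_k}$ on the $X^*$-history gives exactly $\Lambda(X^*_{t_k},\cdot)$.

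\textbf{Where the real work is.}
The step I expect to cause the most trouble is justifying that the finite-dimensional formula for the \emph{minimal} excursions composes into the same formula for the \emph{stratified} semi-group across double-explosions. Two points need care. First, at each $T_i$ the dual is restarted from the limit $X^*_{T_i}=\lim_{t\uparrow T_i}X^*_t$, which by the stratification property lands in a strictly higher stratum; I must check that the kernel relation $\l(X_{T_i}\mid \F^{X^*}_{T_i})=\Lambda(X^*_{T_i},\cdot)$ survives this limit. This is exactly what Corollary~\ref{cor-tps-darret} provides at the $(\F^{X^*}_t)$-stopping time $T_i$, using continuity of $\Lambda(\cdot\,,x)$ and right-continuity of trajectories, but one must confirm $\{T_i<T\}$ plays the role of $\{\tau<T\}$ there and that the $X$-marginal does not explode at $T_i$ (guaranteed by nonexplosiveness of $\L$). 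Second, I must ensure the strong Markov property at $T_i$ lets the conditional law of the post-$T_i$ excursion depend only on $X^*_{T_i}$; this is built into the stratified construction. Once these gluing facts are in hand, the induction on $k$ is the same computation as in Theorem~\ref{cond-dual}, and stochasticity of $P^*(\cdot)$ closes the equality. I would also remark that~(\ref{indep-4}) is the unconditioned form of~(\ref{indep-2}), obtained for free from the finite-dimensional identity since the $X^*$-coordinate's law no longer depends on the full trajectory of $X$ beyond $\F^X_t$.
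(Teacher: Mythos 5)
Your overall architecture is the paper's: build the stratified pair from $\bar{\L}$, use Theorem \ref{cond-dual} on each inter-explosion excursion, restart at each explosion time $T_i$, and glue by induction across epochs (the paper's induction proves the conditional statements (\ref{dual-explo}) and (\ref{dual-apres-explo}) given the window containing $T_i$, sums over windows, then lets $i\to\infty$ by dominated convergence). You also correctly locate the crux at the explosion instants. However, your justification of that crux has a genuine gap: you claim the relation $\l(X_{T_i}\mid\F^{X^*}_{T_i})=\Lambda(X^*_{T_i},\cdot)$ "is exactly what Corollary \ref{cor-tps-darret} provides at the stopping time $T_i$." It is not. That corollary only yields the intertwining on the event $\{\tau<T\}$, and for the minimal excursion whose explosion time is $T_i$, the event $\{T_i<T_i\}$ is empty, so the corollary is vacuous precisely at the time you need it. The paper must instead run a separate limit argument: take the pre-explosion stopping times $T_i^k\uparrow T_i$ (where the corollary-type relation does hold), introduce $\mathcal{A}_i:=\bigvee_k\F^{X^*}_{T_i^k}$, use closed-martingale convergence $\E(G\mid\F^{X^*}_{T_i^k})\to G$, restrict to finitely supported $f$ so that $\Lambda[f]$ is continuous and bounded, and verify via the neighborhoods $V_m$ that $\{X^*_{t_1}=x^*_1,\dots,X^*_{T_i}=x^*_n,\,T_i\in\left]t_{n-1},t_n\right]\}\in\mathcal{A}_i$. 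None of this is supplied by the corollary itself.

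A second, related flaw: you invoke "right-continuity of trajectories" to pass to the limit, but the approach to $T_i$ is from the \emph{left} ($T_i^k\uparrow T_i$), so what is needed is that $X$ is a.s.\ continuous at every explosion time of $X^*$, i.e., that no jump time $\sigma_n$ of $X$ coincides with any $T_m$. This does \emph{not} follow from nonexplosiveness of $\L$ (nonexplosiveness only prevents $X$ from exploding at $T_i$, not from jumping exactly there, in which case $f(X_{T_i^k})$ would converge to $f(X_{T_i^-})\neq f(X_{T_i})$ and the limit identity would attach $\Lambda(X^*_{T_i},\cdot)$ to the wrong value). The paper devotes a dedicated double induction to this, showing $\P(T_{i+1}=\sigma_{n+1}\mid\F^{\bar{X}}_{\sigma_n})=0$ because the first holding time after $\sigma_n$ is a continuous random variable independent of the relevant quantities. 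Finally, your claim that the substochastic-to-stochastic normalization "applies verbatim" to the stratified semi-group is acceptable only once the epoch-gluing inequality is in place; establishing that inequality across an explosion is exactly the missing content above, so the normalization step, while correct in spirit, cannot be cited as closing the argument on its own.
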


		\begin{proof}Let $X^*$ be a stratified jump process constructed from minimal processes given by Theorem \ref{cond-dual}. Thus $(X,X^*)$ has generator $\bar{\L}$ (defined in (\ref{gen-couple})) and initial distribution $\bar{\mu}$ (we recall that $\bar{\mu}(x_0,x_0^*)=\mu_0^*(x_0^*)\Lambda(x_0^*,x_0)$). By construction, $X^*$ satisfies (\ref{indep-4}). Let us show that it satisfies (\ref{indep-3}).
		
		We set $T_0=0$ and for each $i\geq0$ we call $T_{i+1}^k$ the $k$-th jumping time of $(X,X^*)$ following $T_i,\ k\in\N$ and:
			$$T_{i+1}:=\lim\limits_{k\rightarrow\infty}T_{i+1}^k$$
		the $(i+1)$-th explosion time. From hypotheses,
			$$\lim_{i\rightarrow+\infty}T_i=+\infty$$
		We use the convention $T_j^k=+\infty,\ \forall k,\ \forall j>i$ if $T_i=+\infty$.
		
		First, we prove that the process $(X,X^*)$ is almost surely continuous at each explosion time $T_i$. Indeed, $X^*$ is continuous at $T_i$ by definition of a stratified process, and $X$ is discontinuous only when it jumps. Let us call $\sigma_n$ the jumping times of $X$, assume by induction on $n$ that $(X,X^*)$ is continuous at each explosion time until $\sigma_n$, and call $T_i$ the last explosion before $\sigma_n$. Let $\tilde{\L}$ be a generator on $\S^*$ defined by $\tilde{\L}_{x^*,y^*}=\bar{\L}_{(X_{\sigma_n},x^*),(X_{\sigma_n},y^*)}$ and $Y$ be a stratified process starting from $X^*_{\sigma_n}$ and with generator $\tilde{\L}$. Looking at the construction of $X^*$ we made in the proof of Theorem \ref{cond-dual}, we see that $(X^*_{\sigma_n+t})_t$ is equal in law to $Y$ until the time $\sigma_{n+1}$. Calling $\tau_k$ the jumping times of $Y$ before its first explosion, we get:
		
			\begin{align*}
			\P(T_{i+1}=\sigma_{n+1}\,|\,\F^{\bar{X}}_{\sigma_n})&=\P(\sigma_n+\tau_k\overset{k\rightarrow\infty}{\longrightarrow}\sigma_{n+1}\,|\,\F^{\bar{X}}_{\sigma_n})\\
			&=\P(\sigma_n+\tau_1+\sum\limits_{k=1}^{\infty}(\tau_{k+1}-\tau_k)=\sigma_{n+1}\,|\,\F^{\bar{X}}_{\sigma_n})\\
			&=0
			\end{align*}
			
		\noindent since $\tau_1$ is a continuous random variable, independent from the $\tau_{k+1}-\tau_k,\,\sigma_n$ and $\sigma_{n+1}$ conditionally to $\F^{\bar{X}}_{\sigma_n}$. Proceeding the same way (by a second level of induction), starting from $T_{i+j},\;j\ge0$, if it is smaller than $\sigma_{n+1}$, we conclude the induction. In conclusion, we get $\sigma_n\ne T_m,\,\forall m,n\ge0$.
		
		\paragraph{}We will now show by induction on $i$ that for all $0=t_1<\dots<t_n<\infty$ and all $(x_1^*,\dots,x_n^*,x_n)\in\{\mathcal{S}^*\}^n\times\S,\ n\in\N$, we have:
		
			\begin{align}
			&\P(X^*_{t_1}=x^*_1,\dots,X^*_{T_i}=x^*_n,X_{T_i}=x_n\ |\ t_{n-1}<T_i\leq t_n)\nonumber\\
			=&\P(X^*_{t_1}=x^*_1,\dots,X^*_{T_i}=x^*_n\ |\ t_{n-1}<T_i\leq t_n)\Lambda(x^*_n,x_n)\label{dual-explo}
			\end{align}
			
		\noindent and:
		
			\begin{align}
			&\P(X^*_{t_1}=x^*_1,\dots,X^*_{t_n}=x^*_n,X_{t_n}=x_n\ |\ T_{i+1}>t_n)\nonumber\\
			=&\P(X^*_{t_1}=x^*_1,\dots,X^*_{t_n}=x^*_n\ |\ T_{i+1}>t_n)\Lambda(x^*_n,x_n)\label{dual-apres-explo}
			\end{align}

		\paragraph{}For $i=0$, we simply consider a minimal process with generator $\bar{\L}$ so the base case is given by the initial distribution of $(X,X^*)$ (for (\ref{dual-explo})) and Theorem \ref{cond-dual} (for (\ref{dual-apres-explo})). Suppose the induction hypothesis true for some $i-1\geq0$. From (\ref{dual-apres-explo}) we deduce:
			$$\l(X_t\ |\ \F_t^{X^*},T_i>t)=\Lambda(X^*_{t},\cdot),\quad\forall t>0$$
		and in the same way as in Corollary \ref{cor-tps-darret}:
			$$\l(X_S\ |\ \F_S^{X^*})=\Lambda(X^*_S,\cdot)$$
		for every stopping time $S<T_i$ almost surely.
		
		We set:
			$$\mathcal{A}_i:=\bigvee_{k\geq0}\F_{T_i^k}^{X^*}$$
		the $\sigma$-algebra generated by the union of $\F_{T_i^k}^{X^*}$.
		
		Let $G$ be a bounded, real-valued random variable, measurable with respect to $\mathcal{A}_i$, and $f\colon\S\rightarrow\R$ be a finitely supported function. Then $\Lambda[f]$ is continuous and bounded, and by the convergence of closed martingales theorem, we have $\E(G\ |\ \F_{T_i^k}^{X^*})\rightarrow G$ almost surely. Thus using the dominated convergence theorem:

			\begin{align*}
			\E(Gf(X_{T_i}))&=\lim\limits_{k\rightarrow\infty}\E(\E(G\ |\ \F_{T_i^k}^{X^*})\ f(X_{T_i^k}))\\
			&=\lim\limits_{k\rightarrow\infty}\E(\E(G\ |\ \F^{X^*}_{T_i^k})\Lambda[f](X^*_{T_i^k}))\\
			&=\E(G\Lambda[f](X^*_{T_i}))
			\end{align*}

		Now, if we choose $0=t_1<\dots<t_n<\infty$ and $(x_1^*,\dots,x_n^*)\in\{\mathcal{S}^*\}^n,\ n\in\N$ then there exists a decreasing sequence $(V_m)_{m\in\N}$ of neighborhoods of $x^*_n$ such that $\underset{m}{\bigcap} V_m=\{x^*_n\}$ and we have:
		
			\begin{align*}
			&\{X^*_{t_1}=x^*_1,\dots,X^*_{T_i}=x^*_n,T_i\in\left]t_{n-1},t_n\right]\}\\
			=&\{X^*_{t_1}=x^*_1,\dots,\lim\limits_{k\rightarrow\infty}X^*_{T^k_{i}}=x^*_n,\lim\limits_{k\rightarrow\infty}T^k_{i}\in \left]t_{n-1},t_n\right]\}\\
			=&\underset{m\geq0}{\bigcap}\;\underset{K\geq0}{\bigcup}\;\underset{k\geq K}{\bigcap}\{X^*_{t_1}=x^*_1,\dots,X^*_{T^k_{i}}\in V_m,T^k_{i}\in\left]t_{n-1},t_n\right]\}\\
			\end{align*}
		
		\noindent that is:
			$$\{X^*_{t_1}=x^*_1,\dots,X^*_{T_i}=x^*_n,T_i\in\left]t_{n-1},t_n\right]\}\in\mathcal{A}_{i}$$
		and we deduce (\ref{dual-explo}). In other words, the intertwining relation is still true "at explosion time $T_i$".
		
		By construction, the process $((X,X^*)_{T_i+t})_{t\geq0}$ has the same law as a minimal process $(Y,Y^*)$ with generator $\bar{\L}$ up to the explosion time $T_{i+1}$. This process is independent from $((X,X^*)_{T_i\wedge t})_{t\geq0}$ given $(X,X^*)_{T_i}$ and thanks to what we have done right before, we also know that the initial distribution of this process satisfies:
			$$\P(Y_0=x_0\ |\ Y^*_0=x^*_0)=\Lambda(x^*_0,x_0)$$
		for every $(x_0,x^*_0)\in\bar{\S}$. From Theorem \ref{cond-dual} we deduce that for all $0=t_1<\dots<t_n<\infty$ and all $(x_1^*,\dots,x_n^*,x_n)\in\{\mathcal{S}^*\}^n\times\S,\ n\in\N$:
			
			\begin{align*}
			&\P(X^*_{t_1}=x^*_1,\dots,X^*_{t_n}=x^*_n,X_{t_n}=x_n\ |\ T_{i+1}>t_n,\;T_i\leq t_1)\\
			=&\P(Y^*_{t_1-T_i}=x^*_1,\dots,Y^*_{t_n-T_i}=x^*_n,Y_{t_n-T_i}=x_n\ |\ T^Y>t_n-T_i,\;T_i\leq t_1)\\
			=&\P(Y^*_{t_1-T_i}=x^*_1,\dots,Y^*_{t_n-T_i}=x^*_n\ |\ T^Y>t_n-T_i,\;T_i\leq t_1)\Lambda(x^*_n,x_n)\\
			=&\P(X^*_{t_1}=x^*_1,\dots,X^*_{t_n}=x^*_n\ |\ T_{i+1}>t_n,\;T_i\leq t_1)\Lambda(x^*_n,x_n)
			\end{align*}
		
		\noindent where $T^Y$ is the first explosion time of the process $(Y,Y^*)$. We use it to show that the intertwining relation is still valid after the explosion time $T_i$. Indeed, if $k\in\dbc{1,n-1}$, then using:
			\begin{enumerate}[i.]
			\item the Markovianity of $(X,X^*)$\label{1}
			\item the independence of $X_{T_i}$ and $X^*_{[0,T_i]}$ given $X^*_{T_i}$\label{2}
			\item the previous computation\label{3}
			\end{enumerate}
		we get:		
			\begin{align*}
			&\P(X^*_{t_1}=x^*_1,\dots,X^*_{t_n}=x^*_n,X_{t_n}=x_n\ |\ T_i\in\left]t_k,t_{k+1}\right],\;T_{i+1}>t_n)\\
			=&\sum_{\bar{x}\in\bar{\S}}\P(X^*_{t_1}=x^*_1,\dots,X^*_{t_n}=x^*_n,X_{t_n}=x_n,\bar{X}_{T_i}=\bar{x}\ |\ T_i\in\left]t_k,t_{k+1}\right],\;T_{i+1}>t_n)\\
			\overset{\ref{1}.}{=}&\sum_{\bar{x}\in\bar{\S}}\P(X^*_{t_1}=x^*_1,\dots,X^*_{t_{k}}=x^*_{k},\bar{X}_{T_i}=\bar{x}\ |\ T_i\in\left]t_k,t_{k+1}\right],\;T_{i+1}>t_n)\\
			&\times\P(X^*_{t_{k+1}}=x^*_{k+1},\dots,X^*_{t_n}=x^*_n,X_{t_n}=x_n\ |\ \bar{X}_{T_i}=\bar{x},\;T_i\in\left]t_k,t_{k+1}\right],\;T_{i+1}>t_n)\\
			\overset{\ref{2}.}{=}&\sum_{\bar{x}\in\bar{\S}}\P(X^*_{t_1}=x^*_1,\dots,X^*_{t_{k}}=x^*_{k},X^*_{T_i}=x^*\ |\ T_i\in\left]t_k,t_{k+1}\right],\;T_{i+1}>t_n)\\
			&\quad\times\P(X_{T_i}=x\ |\ X^*_{T_i}=x^*,\;T_i\in\left]t_k,t_{k+1}\right],\;T_{i+1}>t_n)\\
			&\quad\times\dfrac{\P(X_{T_i}=x,X^*_{t_{k+1}}=x^*_{k+1},\dots,X_{t_n}=x_n\ |\ X^*_{T_i}=x^*,\;T_{i+1}>t_n,\;T_i\in\left]t_k,t_{k+1}\right])}{\P(X_{T_i}=x\ |\ X^*_{T_i}=x^*,\;T_i\in\left]t_k,t_{k+1}\right],\;T_{i+1}>t_n)}\\
			=&\sum_{\bar{x}\in\bar{\S}}\P(X^*_{t_1}=x^*_1,\dots,X^*_{t_{k}}=x^*_{k},X^*_{T_i}=x^*\ |\ T_i\in\left]t_k,t_{k+1}\right],\;T_{i+1}>t_n)\\
			&\quad\times\P(X_{T_i}=x,X^*_{t_{k+1}}=x^*_{k+1},\dots,X_{t_n}=x_n\ |\ X^*_{T_i}=x^*,\;T_i\in\left]t_k,t_{k+1}\right],\;T_{i+1}>t_n)\\
			=&\sum_{x^*\in{\S^*}}\;\sum_{x\in{\S}}\P(X^*_{t_1}=x^*_1,\dots,X^*_{t_{k}}=x^*_{k},X^*_{T_i}=x^*\ |\ T_i\in\left]t_k,t_{k+1}\right],\;T_{i+1}>t_n)\\
			&\quad\times\P(X_{T_i}=x,X^*_{t_{k+1}}=x^*_{k+1},\dots,X_{t_n}=x_n\ |\ X^*_{T_i}=x^*,\;T_i\in\left]t_k,t_{k+1}\right],\;T_{i+1}>t_n)\\
			=&\sum_{x^*\in\S^*}\P(X^*_{t_1}=x^*_1,\dots,X^*_{t_{k}}=x^*_{k},X^*_{T_i}=x^*\ |\ T_i\in\left]t_k,t_{k+1}\right],\;T_{i+1}>t_n)\\
			&\quad\times\P(X^*_{t_{k+1}}=x^*_{k+1},\dots,X_{t_n}=x_n\ |\ X^*_{T_i}=x^*,\;T_i\in\left]t_k,t_{k+1}\right],\;T_{i+1}>t_n)\\
%
%
			\overset{\ref{3}.}{=}&\sum_{x^*\in\S^*}\P(X^*_{t_1}=x^*_1,\dots,X^*_{t_{k}}=x^*_{k},X^*_{T_i}=x^*\ |\ T_i\in\left]t_k,t_{k+1}\right],\;T_{i+1}>t_n)\\
			&\quad\times\P(X^*_{t_{k+1}}=x^*_{k+1},\dots,X^*_{t_n}=x^*_n\ |\ X^*_{T_i}=x^*,\;T_i\in\left]t_k,t_{k+1}\right],\;T_{i+1}>t_n)\;\Lambda(x^*_n,x_n)\\
			\end{align*}
		Keeping $\Lambda(x^*_n,x_n)$ apart, we do the same steps backward to get that this quantity is equal to:
			$$\P(X^*_{t_1}=x^*_1,\dots,X^*_{t_n}=x^*_n\ |\ T_i\in\left]t_k,t_{k+1}\right],\;T_{i+1}>t_n)\;\Lambda(x^*_n,x_n)$$
		so the only thing left is to sum on all possible intervals for $T_i$:
		
			\begin{align*}
			&\P(X^*_{t_1}=x^*_1,\dots,X^*_{t_n}=x^*_n,X_{t_n}=x_n\ |\ T_{i+1}>t_n)\\
			=&\P(X^*_{t_1}=x^*_1,\dots,X^*_{t_n}=x^*_n,X_{t_n}=x_n\ |\ T_{i+1}>t_n,\;T_i>t_{n})\;\P(T_i>t_{n}\ |\ T_{i+1}>t_n)\\
			&\quad+\sum_{k=1}^{n-1}\P(X^*_{t_1}=x^*_1,\dots,X^*_{t_n}=x^*_n,X_{t_n}=x_n,\;t_{k}<T_i\leq t_{k+1}\ |\ T_{i+1}>t_n)\\
			=&\P(X^*_{t_1}=x^*_1,\dots,X^*_{t_n}=x^*_n\ |\ T_{i}>t_{n})\;\Lambda(x^*_n,x_n)\;\P(T_i>t_{n}\ |\ T_{i+1}>t_n)\\
			&\quad+\sum_{k=1}^{n-1}\P(X^*_{t_1}=x^*_1,\dots,X^*_{t_n}=x^*_n,\;t_{k}<T_i\leq t_{k+1}\ |\ T_{i+1}>t_n)\;\Lambda(x^*_n,x_n)\\
			=&\P(X^*_{t_1}=x^*_1,\dots,X^*_{t_n}=x^*_n\ |\ T_{i+1}>t_n)\;\Lambda(x^*_n,x_n)\\
			\end{align*}
		
		\noindent which concludes the induction. We deduce:
			$$\l(X_t\ |\ \F_t^{X^*},\;T_i>t)=\Lambda(X^*_t,\cdot)$$
		for every $i\in\N$, that is:
			$$\E(\ind_{T_i>t}\;G\;f(X_t))=\E(\ind_{T_i>t}\;G\;\Lambda\left[f\right](X^*_t))$$
		for every bounded continuous function $f$ and every bounded random variable $G$, measurable with respect to $\F_t^{X^*}$. Now, $\underset{i\rightarrow\infty}{\lim}\ind_{T_i>t}=1$ almost surely and we conclude by dominated convergence:
			$$\l(X_t\ |\ \F_t^{X^*})=\Lambda(X^*_t,\cdot)$$
		\end{proof}
		
		\begin{cor}\label{cor-tps-absorption}
		The relation (\ref{indep-3}) may be replaced by:
			
			\begin{equation}
			\l(X_T\ |\ \F^{X^*}_T,\;T<\infty)=\Lambda(X^*_T,\cdot)\label{indep-5}
			\end{equation}
		
		\noindent for every $\F^{X^*}_t$-stopping time $T$.
		\end{cor}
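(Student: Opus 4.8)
The plan is to reproduce, almost verbatim, the discretization argument used for Corollary \ref{cor-tps-darret}, the only structural change being that I now invoke relation (\ref{indep-3}), which holds at \emph{every} fixed time $t\ge0$ with no restriction, so that the conditioning event $\{\tau<T\}$ of Corollary \ref{cor-tps-darret} is everywhere replaced by $\{T<\infty\}$. By definition of the conditional distribution, establishing (\ref{indep-5}) amounts to proving
$$\E\bigl(G\,\ind_{T<\infty}\,f(X_T)\bigr)=\E\bigl(G\,\ind_{T<\infty}\,\Lambda[f](X^*_T)\bigr)$$
for every bounded continuous $f\colon\S\to\R$ and every bounded $\F^{X^*}_T$-measurable random variable $G$, and this is the identity I would target.

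First I would approximate $T$ from above by the dyadic-type stopping times $T_n:=\tfrac1n\lceil nT\rceil$. Exactly as in Corollary \ref{cor-tps-darret}, each $T_n$ is an $(\F^{X^*}_t)$-stopping time with values in $\tfrac1n\N\cup\{\infty\}$; moreover $T\le T_n<T+\tfrac1n$, so that $\F^{X^*}_T\subseteq\F^{X^*}_{T_n}$, $T_n\to T$ with $T_n\ge T$, and $\{T_n<\infty\}=\{T<\infty\}$. Since $G$ is $\F^{X^*}_T$-measurable (hence $\F^{X^*}_{T_n}$-measurable) and $\{T_n=k/n\}=\{(k-1)/n<T\le k/n\}$, the weight
$$G\,\ind_{T_n=k/n}=G\,\ind_{T\le k/n}-G\,\ind_{T\le (k-1)/n}$$
is bounded and measurable with respect to $\F^{X^*}_{k/n}$.

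Next I would apply relation (\ref{indep-3}) at each fixed time $t=k/n$, with $H:=G\,\ind_{T_n=k/n}$ as the bounded $\F^{X^*}_{k/n}$-measurable factor, giving $\E\bigl(Hf(X_{k/n})\bigr)=\E\bigl(H\,\Lambda[f](X^*_{k/n})\bigr)$. Writing $\ind_{T_n<\infty}\,G\,f(X_{T_n})=\sum_{k\ge0}\ind_{T_n=k/n}\,G\,f(X_{k/n})$ and interchanging sum and expectation (legitimate since the summands are dominated by $\|G\|_\infty\|f\|_\infty$ times the indicators $\ind_{T_n=k/n}$, whose sum is at most $1$), I obtain
$$\E\bigl(\ind_{T_n<\infty}\,G\,f(X_{T_n})\bigr)=\E\bigl(\ind_{T_n<\infty}\,G\,\Lambda[f](X^*_{T_n})\bigr).$$

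Finally I would let $n\to\infty$. Because $T_n\to T$ with $T_n\ge T$ and the trajectories of $(X,X^*)$ are right continuous, on $\{T<\infty\}$ one has $f(X_{T_n})\to f(X_T)$ by continuity of $f$ and $\Lambda[f](X^*_{T_n})\to\Lambda[f](X^*_T)$ by continuity of $\Lambda[f]$, while $\ind_{T_n<\infty}=\ind_{T<\infty}$ for all $n$; dominated convergence then yields the target identity and hence (\ref{indep-5}). There is no genuine obstacle here: the argument is the direct analogue of Corollary \ref{cor-tps-darret}, and the only points requiring (routine) care are the measurability bookkeeping for $G\,\ind_{T_n=k/n}$ at the second step and the justification for passing the pointwise limits through the expectation, both of which are settled by right continuity of the paths together with the boundedness of $f$ and $\Lambda[f]$.
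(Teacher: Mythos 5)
Your proof is correct and is precisely the argument the paper itself invokes: its proof of this corollary just says the nontrivial direction ``is the same as for Corollary \ref{cor-tps-darret}'', i.e.\ the dyadic discretization $T_n=\tfrac1n\lceil nT\rceil$, the fixed-time relation (\ref{indep-3}) applied at $t=k/n$ with the $\F^{X^*}_{k/n}$-measurable weight $G\,\ind_{T_n=k/n}$, and the passage to the limit by right continuity of the paths, continuity of $f$ and $\Lambda[f]$, and dominated convergence --- exactly your three steps, with the correct simplification that $\ind_{T_n<\infty}=\ind_{T<\infty}$ replaces the event $\{\tau<T\}$. The only point you left implicit, which the paper states in one line, is the trivial converse: (\ref{indep-5}) implies (\ref{indep-3}) by taking $T$ to be a deterministic time $t$.
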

		
		\begin{proof}
		Condition (\ref{indep-5}) trivially implies (\ref{indep-3}). The converse is the same as for Corollary \ref{cor-tps-darret}.
		\end{proof}

\section{Application to birth-death processes}\label{sec-marche-Z}
Let $(X_t)_{t\geq0}$ be a birth-death process on $\Z$, with death rates $(a_n)_{n\in\Z}$ and birth rates $(b_n)_{n\in\Z}$, all positive. We assume that the coefficients are such that $X$ is nonexplosive and positive recurrent, that is (cf Anderson \cite{anderson} Chapter 8):

	\begin{gather}
	\sum_{i=-\infty}^{+\infty}\mu(i)<\infty\\
	\begin{cases}
	\sum_{i=0}^{+\infty}\dfrac{1}{b_i\mu(i)}=\infty\\
	\sum_{i=-\infty}^{0}\dfrac{1}{b_i\mu(i)}=\infty
	\end{cases}
	\end{gather}

\noindent where $\mu$ is the stationary distribution, uniquely defined by the relations:

	\begin{gather}
	\mu(n)b_n=\mu(n+1)a_{n+1}\\
	\sum_{n\in\Z}\mu(n)=1
	\end{gather}
	
\noindent namely, for every $n\in\N^*$:

	\begin{equation*}
	\mu(n)=\mu(0)\prod_{i=0}^{n-1}\dfrac{b_i}{a_{i+1}},\quad
	\mu(-n)=\mu(0)\prod_{i=0}^{n-1}\dfrac{a_{-i}}{b_{-i-1}}
	\end{equation*}
	
\noindent with $\mu(0)=\dfrac{1}{1+\sum_{n\in\N^*}\left(\prod_{i=0}^{n-1}\dfrac{b_i}{a_{i+1}}+\prod_{i=0}^{n-1}\dfrac{a_{-i}}{b_{-i-1}}\right)}$.

We assume that the coefficients are such that:
	$$\sum_{n\in\Z}\mu(n)b_n<\infty$$

The generator $(\L_{i,j})_{(i,j)\in\Z^2}$ of this process $X$ is defined by:

	\begin{equation}
	\forall i,j\in\Z,\quad\L_{i,j}=
		\begin{cases}
		b_i&\text{ if }j=i+1\\
		a_i&\text{ if }j=i-1\\
		-a_i-b_i&\text{ if }j=i\\
		0&\text{ otherwise}
		\end{cases}
	\end{equation}

We denote $\mu_t$ the distribution of $X_t,\; \forall t\geq0$.

\paragraph{}Starting with this process, we want to construct a strong stationary dual of $X$, with values in:
	$$\mathbf{E}:=\{(p,q)\in(\Z\cup\{-\infty\})\times(\Z\cup\{+\infty\}):\;p\leq q\}$$
the set of all interval of $\Z$, linked to $X$ by the kernel $\Lambda$ defined by:
	$$\Lambda((p,q),n)=\delta_n(\dbc{p,q})\dfrac{\mu(n)}{\mu(\dbc{p,q})},\quad \forall(p,q)\in\mathbf{E},\quad \forall n\in\Z$$
Here and in the sequel, for $(p,q)\in\mathbf{E}$, $n\in\dbc{p,q}$ means $n\in[p,q]\cap\Z$ (we exclude $n=\pm\infty$). 

We endow $\mathbf{E}$ with the topology generated by the following open sets:
	$$\{(p,q)\}$$
	$$\{(p,q+i),\;i\in\N\}\cup\{(p,+\infty)\}$$
	$$\{(p-i,q),\;i\in\N\}\cup\{(-\infty,q)\}$$
	$$\{(-\infty,q+i),\;i\in\N\}\cup\{\Z\}$$
	$$\{(p-i,+\infty),\;i\in\N\}\cup\{\Z\}$$
	$$\{(p-i,q+j),\;i,j\in\N\}\cup\{\Z\}$$
with $-\infty<p\leq q<+\infty$. We set:
	$$\mathbf{E}_1:=\{(p,q)\in\Z^2:\;p\leq q\}$$
	$$\mathbf{E}_2:=\{(-\infty,q)\in\mathbf{E}\}\cup\{(p,+\infty)\in\mathbf{E}\}$$
	$$\mathbf{E}_3:=\{\Z\}$$

Let $\L^*$ be a generator on $\mathbf{E}$ defined as follows:

	\begin{equation}
	\L^*((p,q),(p_i,q_i))=
		\begin{cases}
		\dfrac{\mu(\dbc{p-1,q})}{\mu(\dbc{p,q})}b_{p-1}=:a_{p,q}' & \text{ if }(p_i,q_i)=(p-1,q)\smallskip\\
		\dfrac{\mu(\dbc{p,q+1})}{\mu(\dbc{p,q})}a_{q+1}=:b_{p,q} & \text{ if }(p_i,q_i)=(p,q+1)\smallskip\\
		\dfrac{\mu(\dbc{p+1,q})}{\mu(\dbc{p,q})}a_p=:b_{p,q}' & \text{ if }(p_i,q_i)=(p+1,q)\smallskip\\
		\dfrac{\mu(\dbc{p,q-1})}{\mu(\dbc{p,q})}b_q=:a_{b,q} & \text{ if }(p_i,q_i)=(p,q-1)\smallskip\\
		0 \quad &\text{ for all other }(p_i,q_i)\neq(p,q)
		\end{cases}
	\end{equation}

for all $(p,q)\in\mathbf{E}$, with the convention $a_\infty=b_\infty=a_{-\infty}=b_{-\infty}=0$. Then we get:

	\begin{prop}
	There exists a measure $\mu_0^*$ on $\mathbf{E}$ and a minimal process $X^*$ with initial distribution $\mu_0^*$ and generator $\L^*$ such that $X^*$ is $\Lambda$-linked to $X$.
	\end{prop}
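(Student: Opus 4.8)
The plan is to obtain $X^*$ as a direct application of Theorem \ref{cond-dual}, with $\S=\Z$ and $\S^*=\mathbf{E}$. For this it suffices to produce an initial law $\mu_0^*$ realizing the algebraic duality $(\ref{dual-gen})$, i.e. $\mu_0^*\Lambda=\mu_0$ and $\L^*\Lambda=\Lambda\L$, and to check the standing integrability hypothesis of Section \ref{sec-preliminaire}. The latter is immediate: by reversibility $\mu(n)b_n=\mu(n+1)a_{n+1}$, so $\sum_n\mu(n)a_n=\sum_n\mu(n)b_n<\infty$ by assumption, whence for every interval $(p,q)$,
\[
\sum_{n}\Lambda((p,q),n)\,\L_n=\frac{1}{\mu(\dbc{p,q})}\sum_{n\in\dbc{p,q}}\mu(n)(a_n+b_n)\le\frac{1}{\mu(\dbc{p,q})}\sum_n\mu(n)(a_n+b_n)<\infty,
\]
so the diagonal of $\L$ is $\Lambda$-integrable. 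For the initial law I would simply start the dual at the singleton containing $X_0$: set $\mu_0^*((n,n))=\mu_0(n)$ for $n\in\Z$ and $\mu_0^*$ null on non-singleton intervals. Since $\Lambda((n,n),\cdot)=\delta_n$, this yields $\mu_0^*\Lambda=\mu_0$ for any initial law $\mu_0$ of $X$.

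The heart of the proof is the matrix identity $\L^*\Lambda=\Lambda\L$, which I would verify entry by entry. For each $(p,q)\in\mathbf{E}$ and each $n\in\Z$, one compares
\[
(\L^*\Lambda)((p,q),n)=\sum_{(p',q')}\L^*((p,q),(p',q'))\Lambda((p',q'),n)\quad\text{and}\quad(\Lambda\L)((p,q),n)=\sum_{m}\Lambda((p,q),m)\L_{m,n}.
\]
Both are finite sums (the row of $\L^*$ has at most five nonzero entries, and for fixed $n$ only $m\in\{n-1,n,n+1\}$ contribute on the right). Writing $w:=\mu(\dbc{p,q})$ and repeatedly using detailed balance, the computation splits according to the position of $n$. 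For $n$ interior to $\dbc{p,q}$ both sides vanish: on the right the two neighbouring fluxes cancel the diagonal one through $\mu(n-1)b_{n-1}=\mu(n)a_n$ and $\mu(n+1)a_{n+1}=\mu(n)b_n$, while on the left the four growth/shrink rates are built so that the off-diagonal contributions sum to $\frac{\mu(n)}{w}(b_{p-1}+a_{q+1}+a_p+b_q)$, exactly cancelled by the diagonal rate $-(a'_{p,q}+b_{p,q}+b'_{p,q}+a_{p,q})=-(b_{p-1}+a_{q+1}+a_p+b_q)$, the $\mu$-weighted corrections telescoping away. At the endpoints $n=p,q$ and the two exterior points $n=p-1,q+1$, both sides reduce to a single flux term and match again by detailed balance; elsewhere both are zero.

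The delicate part, and the one I expect to require the most care, is the treatment of the non-finite states: the semi-infinite intervals of $\mathbf{E}_2$ and the full line $\Z\in\mathbf{E}_3$. Here one works with the conventions $a_{\pm\infty}=b_{\pm\infty}=0$, which make the outward rates across an infinite end vanish; in particular all four rates out of $\Z$ are zero, so $\Z$ is absorbing and plays the role of the state $\infty$ with $\Lambda(\Z,\cdot)=\mu$. One must verify that $w=\mu(\dbc{p,q})$ remains finite (guaranteed by summability of $\mu$) and that the same cancellations carry over when one or both endpoints are infinite, which is a careful but routine rerun of the finite case. Once $(\ref{dual-gen})$ is established, Theorem \ref{cond-dual} produces a minimal process $X^*$ on $\mathbf{E}$ with generator $\L^*$ and initial law $\mu_0^*$ satisfying $(\ref{indep-1})$, that is, $\Lambda$-linked to $X$ up to its explosion time, which is the assertion (note that nonexplosiveness of $\L^*$ is not needed, as Theorem \ref{cond-dual} covers the explosive case).
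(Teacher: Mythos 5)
Your proposal is correct and follows essentially the same route as the paper: it invokes Theorem \ref{cond-dual} after choosing the singleton-supported initial law $\mu_0^*((n,n))=\mu_0(n)$ and verifying $\L^*\Lambda=\Lambda\L$ by an entry-by-entry computation using detailed balance, which is exactly the paper's proof (your cancellations, including the telescoping of the $\mu$-weighted corrections via $\mu(p-1)b_{p-1}=\mu(p)a_p$ and $\mu(q+1)a_{q+1}=\mu(q)b_q$, reproduce the paper's identity $(\L^*\Lambda)_{(p,q),n}=(\Lambda\L)_{(p,q),n}=\mu(n)[b_n(\delta_{n,p-1}-\delta_{n,q})+a_n(\delta_{n,q+1}-\delta_{n,p})]/\mu(\dbc{p,q})$). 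Your additional explicit check that $\sum_n\Lambda((p,q),n)\L_n<\infty$ is a standing hypothesis of Section \ref{sec-preliminaire} that the paper leaves implicit (it follows from the assumption $\sum_n\mu(n)b_n<\infty$), so it is a welcome but not divergent addition.
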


	\begin{proof}
	From Theorem \ref{cond-dual}, it is enough to have $\mu_0^*\Lambda=\mu_0$ and $\L^*\Lambda=\Lambda\L$. For the first one it is immediate if we choose the measure whose support is the set of singletons: $\mu_0^*((m,n))=\delta_{m,n}\;\mu_0(n)$. For the second one: $\forall (p, q)\in\mathbf{E},\; \forall n\in\Z$

		\begin{align*}
		(\Lambda\L)_{(p,q),n}&=\sum_{m\in\Z}\Lambda((p,q),m)\;\L(m,n)\\
		&=\sum_{m\in\dbc{p,q}}\Lambda((p,q),m)\;\L(m,n)\\
		&=\dfrac{\mu(n-1)}{\mu(\dbc{p,q})}\;\delta_{n-1}(\dbc{p,q})\;b_{n-1}\\&\quad\quad+\dfrac{\mu(n+1)}{\mu(\dbc{p,q})}\;\delta_{n+1}(\dbc{p,q})\;a_{n+1}-\dfrac{\mu(n)}{\mu(\dbc{p,q})}\;\delta_{n}(\dbc{p,q})\;(b_n+a_n)\\
		&=\dfrac{\mu(n)[b_n(\delta_{n,p-1}-\delta_{n,q})+a_n(\delta_{n,q+1}-\delta_{n,p})]}{\mu(\dbc{p,q})}
		\end{align*}

	\noindent and: 

		\begin{align*}
		(\L^*\Lambda)_{(p,q),n}&=\sum_{(p',q')\in\mathbf{E}}\L^*((p,q),(p',q'))\;\Lambda((p',q'),n)\\
		&=\dfrac{\mu(n)}{\mu(\dbc{p,q})}(b_{p-1}\delta_n(\dbc{p-1,q})-\delta_n(\dbc{p,q})b_{p-1}\\
		&\hspace{47pt}+a_{q+1}\delta_n(\dbc{p,q+1})-\delta_n(\dbc{p,q})a_{q+1}\\
		&\hspace{47pt}+a_p\delta_n(\dbc{p+1,q})-\delta_n(\dbc{p,q})a_p\\
		&\hspace{47pt}+b_q\delta_n(\dbc{p,q-1})-\delta_n(\dbc{p,q})b_q)\\
		&=\dfrac{\mu(n)[b_n(\delta_{n,p-1}-\delta_{n,q})+a_n(\delta_{n,q+1}-\delta_{n,p})]}{\mu(\dbc{p,q})}\\
		&=(\Lambda\L)_{(p,q),n}
		\end{align*}

	\end{proof}

So we have a minimal process $(X^*_t)=(P_t,Q_t)\in\mathbf{E}$. Its bounds $P$ and $Q$ evolve as birth-death processes individually not Markovian on $\Z$, until a possible finite explosion time $T$. Here, the only absorbing state for $X^*$ is $\Z$, but it is not reachable in finitely many jumps (if $X^*$ starts from another state). We hence have to consider a stratified jump process associated to $\L^*$, which is made possible by:

	\begin{prop}
	$\L^*$ is stratified on $\mathbf{E}$, and $(\mathbf{E}_1,\mathbf{E}_2,\mathbf{E}_3)$ is a stratification of $\L^*$.
	\end{prop}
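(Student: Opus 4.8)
The statement to prove is that the triple $(\mathbf{E}_1,\mathbf{E}_2,\mathbf{E}_3)$ meets the three requirements of Definition \ref{def-strat} with $N=3$. The first is immediate: finite segments, half-infinite segments and $\Z$ are pairwise distinct, so $\mathbf{E}_1,\mathbf{E}_2,\mathbf{E}_3$ are disjoint and exhaust $\mathbf{E}$. For the second requirement (confinement to lower-ranked strata before explosion), I would note that every jump of $X^*$ changes exactly one endpoint by one unit, and that by the convention $a_{\pm\infty}=b_{\pm\infty}=0$ an infinite endpoint carries no outgoing rate. Hence: from a state of $\mathbf{E}_1$ both endpoints stay finite after any finite number of jumps, so before the explosion time $\tau$ (where only finitely many jumps have occurred) the process remains in $\mathbf{E}_1$; from a state of $\mathbf{E}_2$ the infinite endpoint is frozen while the finite one performs a nearest-neighbour walk, so the process stays in $\mathbf{E}_2\subset\mathbf{E}_1\sqcup\mathbf{E}_2$; and $\{\Z\}=\mathbf{E}_3$ is absorbing, hence never explodes, which also settles the third requirement for $\mathbf{E}_3$.

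The real content is the third requirement on $\{\tau<\infty\}$: the trajectory must have a limit $X^*_\tau$, lying in a stratum of strictly higher rank. The key tool I would introduce is the bounded function $\phi(p,q):=\mu(\dbc{p,q})\in(0,1]$. Using the detailed balance relation (\ref{eq-reversible}) in the forms $\mu(p-1)b_{p-1}=\mu(p)a_p$ and $\mu(q)b_q=\mu(q+1)a_{q+1}$, a short computation gives
$$\L^*\phi(p,q)=\frac{a_p\mu(p)\big(\mu(p-1)+\mu(p)\big)+b_q\mu(q)\big(\mu(q+1)+\mu(q)\big)}{\mu(\dbc{p,q})}\ge 0,$$
with the convention that the $p$-term (resp. the $q$-term) disappears when $p=-\infty$ (resp. $q=+\infty$). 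Since $\phi$ is bounded, the stopped processes $\phi(X^*_{t\wedge\tau_n})$ are bounded submartingales (by the martingale identity for minimal processes stopped at the $n$-th jump time, which needs no bound on $\L^*\phi$), whence $\lim_{t\to\tau}\mu(\dbc{P_t,Q_t})$ exists almost surely. In particular the segment cannot asymptotically shed all of its mass, which is exactly what forbids a segment sliding off entirely toward $+\infty$ or $-\infty$ (such escapes would force $\phi\to 0$).

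For a start in $\mathbf{E}_2$, say at $(-\infty,q)$, the frozen left endpoint makes $Q_t$ a genuine birth--death process on $\Z$, and I must show that on $\{\tau<\infty\}$ one has $Q_t\to+\infty$, so that $X^*_\tau=\Z\in\mathbf{E}_3$, the value $Q_t\to-\infty$ being excluded. For a start in $\mathbf{E}_1$ the main observation is a holding-time argument on the joint Markov process $(P_t,Q_t)$: each state of $\mathbf{E}_1$ has a fixed finite total jump rate, so if some state were visited infinitely often before $\tau$ it would contribute infinitely many i.i.d. exponential holding times inside the finite interval $[0,\tau)$, which is impossible; hence every finite subset of $\mathbf{E}_1$ is left for good and the trajectory escapes to infinity. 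It then remains to see that exactly one endpoint escaping yields a limit in $\mathbf{E}_2$ and both escaping yields $\Z\in\mathbf{E}_3$, the endpoints converging monotonically since the walk is nearest-neighbour (a value revisited infinitely often would again force infinitely many holding times).

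The hard part is precisely to exclude the two inadmissible regimes — the left endpoint drifting to $+\infty$ (equivalently the segment sliding rightward) and, from $\mathbf{E}_2$, the finite endpoint heading toward the wrong infinity — together with genuine oscillation of a single endpoint. I expect these to be controlled by combining the almost sure convergence of $\mu(\dbc{P_t,Q_t})$ above (which already bars the mass-losing directions) with the non-explosiveness of $X$: the very divergence $\sum 1/(b_i\mu(i))=\infty$ that prevents $X$ from reaching $\pm\infty$ in finite time should, after transfer through $\Lambda$ and the detailed-balance rewriting of the rates $a'_{p,q},b_{p,q},b'_{p,q},a_{b,q}$, prevent the dual endpoints from reaching the forbidden infinities in finite time, leaving only the admissible limits of $\mathbf{E}_2$ and $\mathbf{E}_3$.
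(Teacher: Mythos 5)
Your handling of the first two stratification conditions and of the ``no state is visited infinitely often before a finite explosion time'' argument is fine and matches what the paper does implicitly (the paper's last-jump argument for the non-exploding endpoint is exactly your observation). The genuine gap is at the crux, and you half-admit it yourself: the submartingale argument does not exclude the inadmissible escapes. Your computation of $\L^*\phi\ge 0$ for $\phi(p,q)=\mu(\dbc{p,q})$ is correct, and a bounded submartingale does converge almost surely --- but nothing in the submartingale property forbids the limit from being $0$ on the event where the segment slides off to $+\infty$, or where the finite endpoint of $(-\infty,q)$ drifts to $-\infty$; only the \emph{expectation} of $\phi$ is monotone, and a.s.\ convergence is perfectly compatible with $\phi\to 0$ on a set of positive probability. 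So the sentence ``such escapes would force $\phi\to 0$'' rules out nothing, and your final paragraph, which says you ``expect'' the forbidden regimes to be excluded via the nonexplosiveness of $X$, leaves the third stratification condition --- the whole content of the proposition --- unproved. The mechanism you guess at is also not the right one: the paper never uses the nonexplosion criterion $\sum 1/(b_i\mu(i))=\infty$ here. Instead (Lemma \ref{couplage}) it builds a monotone coupling $Q_t\ge Z_t$ of the exploding right endpoint with the \emph{autonomous} birth--death process $Z$ with rates $\tilde a_n=\frac{\mu(\dbc{-\infty,n-1})}{\mu(\dbc{-\infty,n})}b_n$ and $\tilde b_n=\frac{\mu(\dbc{-\infty,n+1})}{\mu(\dbc{-\infty,n})}a_{n+1}$ (the right endpoint of a half-infinite dual, for which the comparison $b_{p,q}\ge\tilde b_q$, $a_{p,q}\le\tilde a_q$ holds), and then proves $Z_t\to+\infty$ a.s.\ by combining (i) the intertwining with the positive recurrent primal ($\tilde X_t\in\tilde X^*_t$, so $Z_t>0$ infinitely often), (ii) the transience estimate $\sum_i\prod_j\tilde a_j/\tilde b_j\le\sum_i\prod_j b_j/a_{j+1}<\infty$, which comes from summability of $\mu$, and (iii) an excursion argument to upgrade these to almost sure divergence.

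It is worth noting how close your Lyapunov idea is to a genuine shortcut: you chose the wrong function. Apply your computation to $1/\phi$ instead of $\phi$: by detailed balance, $b_{p-1}\mu(p-1)=a_p\mu(p)$ and $b_q\mu(q)=a_{q+1}\mu(q+1)$, the four terms cancel pairwise and $\L^*\bigl[1/\mu(\dbc{\cdot,\cdot})\bigr]=0$ at every state --- removing an endpoint never disconnects an interval, which is exactly why the correction term $|\mathscr{C}(Q\setminus\{p\})|-1$ in the analogous computation of Lemma \ref{integrabilité} vanishes on $\mathbf{E}$. Hence $1/\mu(\dbc{P_t,Q_t})$ is a nonnegative local martingale, thus (via stopping at the jump times and Fatou) a supermartingale, and it converges a.s.\ to a \emph{finite} limit; this keeps $\mu(\dbc{P_t,Q_t})$ bounded away from $0$ along the trajectory and genuinely excludes all the mass-losing directions, which the submartingale $\phi$ cannot do. With that substitution, your holding-time argument would then pin the exploding endpoint to the correct infinity and your proof scheme would close, by a route shorter than the paper's coupling; as written, however, the central exclusion step is missing.
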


	\begin{proof}
	For a minimal process $(X^*_t)_t=(P_t,Q_t)_t$ with generator $\L^*$ and initial distribution $\mu^*_0$, we call $T$ its first explosion time and we set:
		$$T_1:=\begin{cases}
		T&\quad\text{if }T\text{ is an explosion time for }P\\
		+\infty&\quad\text{otherwise}
		\end{cases}$$
		$$T_2:=\begin{cases}
		T&\quad\text{if }T\text{ is an explosion time for }Q\\
		+\infty&\quad\text{otherwise}
		\end{cases}$$
	so that $T=T_1\wedge T_2$.
		
	We will show that $\underset{t\rightarrow T}{\lim}\;Q_t=+\infty$ a.s. if $T=T_2$, and we deduce by symmetry that $\underset{t\rightarrow T}{\lim}\;P_t=-\infty$ if $T=T_1$. To do so, we consider the process $X^*=(P,Q)$ coupled with a (homogeneous) birth-death process $Z$ with same initial distribution as $Q$, corresponding to the evolution of the right bound of $X^*$ when the left bound is in $-\infty$. Such a coupling is given by:

		\begin{Lem}\label{couplage}
		If $T=T_2$, there exists a coupling $(X^*,Z)$ such that $Q_t\geq Z_t,\; \forall 0\leq t\leq T$.
		\end{Lem}

		\begin{proof}
		We are going to construct $(Z_t)_{t\geq0}$, such that $(-\infty,Z)$ is a Markov process with generator $\L^*$. We simulate $X^*_0=(P_0,Q_0)$ with its initial distribution $\mu_0^*$, and we set $Z_0=Q_0$. The process $Z$ is a birth-death process with death and birth rates respectively:
			$$\tilde{b}_n=\dfrac{\mu(\dbc{-\infty,n+1})}{\mu(\dbc{-\infty,n})}a_{n+1} \ \text{ and }\ \tilde{a}_n=\dfrac{\mu(\dbc{-\infty,n-1})}{\mu(\dbc{-\infty,n})}b_n$$
		An elementary computation shows that for all $(p,q)\in\mathbf{E},\;b_{p,q}\geq\tilde{b}_q$ and $a_{p,q}\leq\tilde{a}_q$
%
		that is, the births are on average quicker for the process $(Q_t)_t$, and the deaths are slower. To make the coupling, we proceed by letting the two processes evolve separately and independently as long as $Q_t>Z_t$. Then, when they join, we "force" $Z_t$ to go down if $Q_t$ does, and $Q_t$ to go up if $Z_t$ does.

		For this purpose, we will define by iteration the sequence $(T_i)_i$ of junction times of $Q_t$ and $Z_t$, and $(T_i')_i$ that of the jumping times following the junction (if $Q_{T'_i}=Z_{T'_i}$ we set $T_{i+1}=T_i'$), and we will describe the behavior of the coupling $(X^*,Z)$ between those times. Let $T_0=0$. Assume that $(X^*,Z)$ is constructed until the time $T_i$, for some $i\geq 0$. We define $T_i'$ as a random variable such that $T_i'-T_i$ is exponentially distributed with parameter $\lambda_i:=\tilde{a}_{Q_{T_i}}+b_{P_{T_i},Q_{T_i}}+a_{P_{T_i},Q_{T_i}}'+b_{P_{T_i},Q_{T_i}}'$.
		We impose $Q_t=Z_t=Q_{T_i}$ and $P_t=P_{T_i}, \ \forall T_i<t<T_i'$, then: 
			$$((P_{T_i'},Q_{T_i'}),Z_{T_i'})=
				\begin{cases}
				((P_{T_i},Q_{T_i}-1),Z_{T_i}-1) & \text{ with probability } \dfrac{a_{P_{T_i},Q_{T_i}}}{\lambda_i}\\
				((P_{T_i},Q_{T_i}+1),Z_{T_i}+1) & \text{ with probability } \dfrac{\tilde{b}_{Q_{T_i}}}{\lambda_i}\smallskip\\
				((P_{T_i},Q_{T_i}),Z_{T_i}-1) & \text{ with probability } 	\dfrac{\tilde{a}_{Q_{T_i}}-a_{P_{T_i},Q_{T_i}}}{\lambda_i}\\
				((P_{T_i},Q_{T_i}+1),Z_{T_i}) & \text{ with probability } \dfrac{b_{P_{T_i},Q_{T_i}}-\tilde{b}_{Q_{T_i}}}{\lambda_i}\\
				((P_{T_i}-1,Q_{T_i}),Z_{T_i}) & \text{ with probability } \dfrac{a_{P_{T_i},Q_{T_i}}'}{\lambda_i}\\
				((P_{T_i}+1,Q_{T_i}),Z_{T_i}) & \text{ with probability } \dfrac{b_{P_{T_i},Q_{T_i}}'}{\lambda_i}\\
				\end{cases}$$
		Next, $X^*_t$ and $Z_t$ evolve independently between times $T_i'$ and $T_{i+1}:=\inf\{t\geq T_i': Q_t=Z_t\}$. We clearly have $Q_t\geq Z_t,\; \forall 0\leq t\leq T$. Moreover, one checks that the marginals are the ones we want: write the coupling generator from the previous coefficients, and apply it to functions depending only on one variable.
%
%
		\end{proof}

	\paragraph{}It only remains to check: 
		
		\begin{equation}
		Z_t\underset{t\rightarrow+\infty}{\rightarrow}+\infty\quad\text{a.s.}\label{vie-mort-Z}
		\end{equation}
	
	$(-\infty,Z)$ is equal in law to a process $\tilde{X}^*$, $\Lambda$-linked to some $\tilde{X}$ with generator $\L$. The latter is positive recurrent, and for all $t\geq0, \tilde{X}_t\in\tilde{X}^*_t$ a.s., so $Z_t>0$ infinitely often (considering the underlying jump chain). This implies in particular that $Z_t\underset{t\rightarrow+\infty}{\rightarrow}-\infty$ is impossible a.s..

	If we consider the same process $\tilde{Z}_t$, forced to stay on $\N$ by setting $a_0=0$, then the condition for $\tilde{Z}_t\underset{t\rightarrow+\infty}{\rightarrow}+\infty$ is:
		$$\sum_{i=0}^{\infty}\prod_{j=1}^{i}\dfrac{\tilde{a}_j}{\tilde{b}_j}<\infty$$
	Now:

		\begin{align*}
		\sum_{i=0}^{\infty}\prod_{j=1}^{i}\dfrac{\tilde{a}_j}{\tilde{b}_j}&=\sum_{i=0}^{\infty}\prod_{j=1}^{i}\dfrac{\mu(\dbc{-\infty,j-1})b_j}{\mu(\dbc{-\infty,j+1})a_{j+1}}\\
		&\leq\sum_{i=0}^{\infty}\prod_{j=1}^{i}\dfrac{b_j}{a_{j+1}}
		<\infty
		\end{align*}

	Consequently, $\tilde{Z}_t\underset{t\rightarrow+\infty}{\rightarrow}\infty$ almost surely, and so does $Z_t$. Indeed, as long as $Z_t>0$, the process $Z$ behaves like $\tilde{Z}$. If we set $\tau_0=0,\;\sigma_0=\inf\{t\geq\tau_0,\;Z_t=0\}$, and for every $i>0$:
		$$\tau_i=\inf\{t>\sigma_{i-1},\;Z_t=1\}\quad\text{and}\quad\sigma_i=\inf\{t>\tau_i,\;Z_t=0\}$$
	(with the convention $\inf\varnothing=+\infty$), we have $\P(\tau_i<\infty\ |\ \sigma_{i-1}<\infty)=1$ since $Z_t>0$ infinitely often and:
		$$\P(\sigma_i=\infty\ |\ \tau_i<\infty)=\P(\tilde{Z}_t\neq0,\;\forall t>0\ |\ \tilde{Z}_0=1)>0$$
	So there exists almost surely a time $\tau_i$ for which $Z_t>0$ for all $t\geq\tau_i$ (succession of independent Bernoulli variables). Since:
		$$\l(Z_{\tau_i+\cdot}\ |\ \tau_i<\infty,\sigma_i=\infty)=\l(\tilde{Z}\ |\ \tilde{Z}_0=1,\tilde{Z}_t>0,\forall t>0)$$
	it follows that $Z_t\underset{t\rightarrow+\infty}{\rightarrow}\infty$ almost surely.
	
	\paragraph{}If on the opposite $T<T_2$, then $Q$ does not explode at time $T$ and thus admits a last jumping time $T_k$ before $T$. The same holds for $P$, and we conclude that $\underset{t\rightarrow T}{\lim}\;X^*_t$ exists almost surely.
	
	One easily checks that all three conditions for $\mathbf{E}_1,\mathbf{E}_2,\mathbf{E}_3$ to be a stratification of $\L^*$ are satisfied.
	\end{proof}

\paragraph{}
By Theorem \ref{cond-dual2}, we can now consider a stratified jump process $X^*$ with generator $\L^*$ and initial distribution $\mu_0^*$ satisfying $\mu^*_0\Lambda=\mu_0$, such that $X^*$ is $\Lambda$-linked to $X$. The interest of this process is that the existence of a finite strong stationary time for an initial distribution $\mu_0$ of $X$ is equivalent to the absorption in finite time of $X^*$ in $\Z$, for at least one $\mu_0^*$. The first implication derives directly from the definition of $\Lambda$ and Corollary \ref{cor-tps-absorption}, the converse comes from the following proposition:

	\begin{prop}\label{sharp}
	If $\mu_0=\mu_{|\dbc{-\infty,0}}$ and $X_0^*=(-\infty,0)$ a.s., then the dual $X^*$ is \emph{sharp}.
	\end{prop}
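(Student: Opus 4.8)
The plan is to compute the separation $\mathfrak{s}(t)$ explicitly and show that it coincides with $\P(T>t)$, where $T$ denotes the time of absorption of $X^*$ in $\Z$. Since $X^*$ is a strong stationary dual, the bound $\mathfrak{s}(t)\le\P(T>t)$ holds automatically, so the entire content is the reverse inequality, which I would extract directly from the intertwining relation (\ref{indep-3}). First I would exploit the initial condition to simplify the state space drastically: starting from $X_0^*=(-\infty,0)\in\mathbf{E}_2$, the left bound stays at $-\infty$ forever, because the rates $a'_{-\infty,q}$ and $b'_{-\infty,q}$ both vanish under the convention $a_{-\infty}=b_{-\infty}=0$. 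Hence for every $t<T$ one has $X^*_t=(-\infty,Q_t)$ with $Q_t\in\Z$, and $T$ is exactly the explosion time of the right bound $Q$, at which $X^*$ enters the absorbing state $\Z$.

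Next I would unfold the intertwining. Taking expectations in (\ref{indep-3}), using $\Lambda(\Z,m)=\mu(m)$ and the fact that $m\in\dbc{-\infty,q}$ iff $q\ge m$, one obtains for every $m\in\Z$
\[
\mu_t(m)=\sum_{q\ge m}\P(X^*_t=(-\infty,q))\,\frac{\mu(m)}{\mu(\dbc{-\infty,q})}+\P(T\le t)\,\mu(m).
\]
Dividing by $\mu(m)$ (which is positive since all rates are positive) and rearranging gives
\[
1-\frac{\mu_t(m)}{\mu(m)}=\P(T>t)-S(m,t),\qquad S(m,t):=\sum_{q\ge m}\frac{\P(X^*_t=(-\infty,q))}{\mu(\dbc{-\infty,q})}\ge0.
\]
As $S(m,t)\ge0$, this already recovers $\mathfrak{s}(t)\le\P(T>t)$; the remaining task is to show that the supremum over $m$ of the left-hand side is achieved in the limit $m\to+\infty$.

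Finally I would let $m\to+\infty$. Since $q\mapsto\mu(\dbc{-\infty,q})$ is nondecreasing,
\[
0\le S(m,t)\le\frac{1}{\mu(\dbc{-\infty,m})}\,\P(Q_t\ge m,\,t<T),
\]
and because $\mu(\dbc{-\infty,m})\to1$ while $\P(Q_t\ge m,\,t<T)\to0$ (the right bound is a.s. finite before explosion, so dominated convergence applies), we get $S(m,t)\to0$. Therefore $\sup_{m\in\Z}\bigl(1-\mu_t(m)/\mu(m)\bigr)=\P(T>t)$, i.e. $\mathfrak{s}(t)=\P(T>t)$ for every $t\ge0$, which is precisely the assertion that $T$ is a time to stationarity and hence that the dual is sharp. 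The only genuinely delicate step is the tail estimate $S(m,t)\to0$; everything else is bookkeeping on the intertwining identity.
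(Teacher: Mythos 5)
Your proof is correct, and it is a genuinely useful addition here, because the paper itself does not write out an argument at all: it simply defers to Miclo \cite{Miclo}, Lemma 26, which proves the analogous sharpness statement for the diffusion dual on $\R$. Your computation is essentially the discrete-space analogue of that argument, made fully self-contained: the two load-bearing observations are (i) the convention $a_{-\infty}=b_{-\infty}=0$ freezes the left endpoint, so that $X^*_t=(-\infty,Q_t)$ for $t<T$ and $X^*_t=\Z$ for $t\geq T$, and (ii) averaging the intertwining relation (\ref{indep-3}) gives $\mu_t(m)=\E\left[\Lambda(X^*_t,m)\right]$, whence the exact identity $1-\mu_t(m)/\mu(m)=\P(T>t)-S(m,t)$ with $S(m,t)\geq0$ and $S(m,t)\to0$ as $m\to+\infty$, so that the supremum defining $\mathfrak{s}(t)$ is exactly $\P(T>t)$. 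Note that step (ii) tacitly uses that the intertwining holds for the \emph{stratified} dual, i.e. the hypotheses of Theorem \ref{cond-dual2} (stratification of $\L^*$, a.s. infinite double-explosion time, continuity of $\Lambda(\cdot,m)$), all of which the paper verifies in this section, so nothing is missing. What each route buys: the paper's citation is shorter but forces the reader to translate a diffusion lemma to the discrete setting, while your version is elementary bookkeeping on the dual's one-dimensional structure and makes transparent that sharpness, i.e. $\mathfrak{s}(t)=\P(T>t)$ for all $t$, holds irrespective of whether the absorption time $T$ is finite (condition (\ref{borne-droite}) is not needed for sharpness itself, only for finiteness of $T$), which is exactly how the proposition is used in the necessity half of the theorem that follows.
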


For the proof, refer to Miclo \cite{Miclo}, Lemma 26.

%
%

So, in that case, the time to absorption in $\Z$ of $X^*$ is a time to stationarity and its almost sure finiteness is equivalent to the existence of a finite strong stationary time (at least for that initial distribution $\mu_0$). 

We are finally able to state the main result of this section:

	\begin{thm}
	There exists a finite strong stationary time of $X$, whatever the initial distribution, if and only if:
	
		\begin{gather}
		\sum_{i=1}^{\infty}\mu(i+1)\sum_{j=1}^{i}\dfrac{1}{\mu(j)b_j}<\infty\label{borne-droite}\\
		\sum_{i=1}^{\infty}\mu(-i-1)\sum_{j=1}^{i}\dfrac{1}{\mu(-j)a_{-j}}<\infty\label{borne-gauche}
		\end{gather}

	\end{thm}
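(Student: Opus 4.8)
The plan is to translate the statement into a property of the dual $X^*$ and then into one-dimensional explosion criteria. As explained just above, the absorption time $T:=\inf\{t:X^*_t=\Z\}$ is an $\F^{X^*}_t$-stopping time at which, since $\Lambda(\Z,\cdot)=\mu$, Corollary \ref{cor-tps-absorption} gives $\l(X_T\mid\F^{X^*}_T,\,T<\infty)=\mu$; hence $T$ is a strong stationary time of $X$, and a finite one whenever $T<\infty$ a.s. Conversely, for the starting configuration of Proposition \ref{sharp} (and its mirror image) the dual is \emph{sharp}, so $\mathfrak s(t)=\P(T>t)$, and the existence of \emph{any} finite strong stationary time forces $\mathfrak s(t)\to 0$, hence $T<\infty$ a.s. The whole theorem thus reduces to deciding when $X^*$ reaches $\Z$ in finite time:
\[
X^*\text{ is absorbed at }\Z\text{ in finite time a.s.}\iff(\ref{borne-droite})\ \text{and}\ (\ref{borne-gauche}).
\]

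First I would reduce absorption to two successive single-bound explosions via the stratification $(\mathbf E_1,\mathbf E_2,\mathbf E_3)$. To reach $\Z\in\mathbf E_3$ the process must first send one bound to infinity (the first explosion, landing in $\mathbf E_2$) and then the other (the double explosion, landing in $\mathbf E_3$). In a state of $\mathbf E_2$ one bound is frozen at infinity and the other evolves \emph{exactly} as the pure birth-death process $Z$ of Lemma \ref{couplage}, with rates $\tilde b_n,\tilde a_n$ (resp.\ as its mirror image $Z'$ under $n\mapsto -n,\ a\leftrightarrow b$). So the second explosion is finite iff $Z$ reaches $+\infty$ (resp.\ $Z'$ reaches $-\infty$) in finite time. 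For the first explosion, the monotone coupling $Q_t\ge Z_t$ of Lemma \ref{couplage} shows the right bound dominates $Z$, so if $Z$ escapes in finite time then the first escape does too. Whichever bound remains after the first explosion is then a pure $Z$ or $Z'$ process. Consequently $T<\infty$ a.s.\ precisely when both $Z$ and $Z'$ escape to infinity in finite time.

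The heart of the proof is the explosion criterion for $Z$. Writing $M_n:=\mu(\dbc{-\infty,n})$, I would compute its reversible measure $\hat\pi$ from $\tilde b_n=\frac{M_{n+1}}{M_n}a_{n+1}$ and $\tilde a_n=\frac{M_{n-1}}{M_n}b_n$; using reversibility of $\L$ (so that $\prod_{j\le n}a_j/b_j$ telescopes through $\mu(j-1)b_{j-1}=\mu(j)a_j$) one finds $\hat\pi_n\propto M_n^2/(\mu(n)b_n)$ and $1/(\tilde b_n\hat\pi_n)\propto \mu(n+1)/(M_{n+1}M_n)$. As in the stratification proof I would replace $Z$ by the process $\tilde Z$ reflected at $0$, equivalent to $Z$ for the finiteness of the escape time since $Z\to+\infty$ a.s.\ and spends only finite time below each level, and apply Reuter's criterion (Anderson \cite{anderson}, Chapter 8): $\tilde Z$ is explosive iff $\sum_n\frac{1}{\tilde b_n\hat\pi_n}\sum_{k=0}^n\hat\pi_k<\infty$. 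Since $X$ is positive recurrent, $M_n\to1$, so the factors $M_n,M_{n+1},M_k$ are bounded between positive constants for $n,k\ge0$ and the $k=0$ term contributes at most a constant times $\sum_n\mu(n+1)<\infty$; the series then converges iff $\sum_n\mu(n+1)\sum_{k=1}^n\frac{1}{\mu(k)b_k}<\infty$, which is exactly (\ref{borne-droite}). The mirror computation yields (\ref{borne-gauche}) for $Z'$.

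Finally I would assemble the two implications. If (\ref{borne-droite})--(\ref{borne-gauche}) hold, then for any $\mu_0$ the singleton dual $\mu_0^*((m,n))=\delta_{m,n}\mu_0(n)$ starts in $\mathbf E_1$; the domination gives a finite first explosion and the pure-process criterion a finite second explosion, so $T<\infty$ a.s.\ and $T$ is a finite strong stationary time. Conversely, applying the hypothesis to $\mu_0=\mu_{|\dbc{-\infty,0}}$ with $X^*_0=(-\infty,0)$: absorption from this state needs only $Z$ to explode, and sharpness forces $T<\infty$ a.s., hence (\ref{borne-droite}); the symmetric start $\mu_0=\mu_{|\dbc{0,+\infty}}$, $X^*_0=(0,+\infty)$ gives (\ref{borne-gauche}). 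I expect the third paragraph to be the main obstacle: pinning down the reversible measure and massaging Reuter's series into the stated form, while rigorously justifying both the reduction of the two-sided escape of $Z$ to the one-sided explosion of $\tilde Z$ and the negligibility of the boundary and left-tail contributions under positive recurrence.
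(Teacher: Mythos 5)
Your proposal is correct and takes essentially the same route as the paper: necessity via the sharp dual of Proposition \ref{sharp} started from $\mu_{|\dbc{-\infty,0}}$ (and its mirror) reduced to the explosion of the pure process $Z$, and sufficiency via the domination $Q_t\geq Z_t$ of Lemma \ref{couplage}, with Anderson's explosion criterion simplified using the uniform boundedness of $\mu(\dbc{-\infty,n})$ — your reversible-measure computation $\hat\pi_n\propto\mu(\dbc{-\infty,n})^2/(\mu(n)b_n)$ is just an equivalent rewriting of the paper's product-form series. If anything, your explicit two-stage bookkeeping through the strata $\mathbf{E}_1\to\mathbf{E}_2\to\mathbf{E}_3$ is more careful than the paper's terse sufficiency paragraph, which asserts $(-\infty,Z)\subset X^*$ despite the singleton start and elides that after the first explosion the remaining bound must still be sent to infinity.
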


	\begin{proof}

		\begin{itemize}
		
		\item[$\bullet$] \underline{Necessary condition:}\\
		We consider the same assumption as in Proposition \ref{sharp}, that is $\mu_0=\mu_{|\dbc{-\infty,0}}$ and $X_0^*=\dbc{-\infty,0}$ a.s.. We saw that in this case, a dual $X^*$ with generator $\L^*$ is sharp, so if there exists a strong stationary time then $X^*$ is absorbed in finite time almost surely. But this happens if and only if the series
			$$\sum_{i=1}^{\infty}\prod_{j=1}^{i}\dfrac{\tilde{a}_j}{\tilde{b}_j}\sum_{k=1}^{i}\prod_{l=1}^{k}\dfrac{\tilde{b}_{l-1}}{\tilde{a}_l}$$
		converges. Indeed, this is the criterion for the explosion in finite time of a birth-death process on $\N$ (see e.g. Anderson \cite{anderson}, Section 8.1), that we extend to a process on $\Z$ with values in $\N$ infinitely often, in the same way as for (\ref{vie-mort-Z}). Using the fact that $\mu(\dbc{-\infty,j})$ is uniformly bounded on $j\in\N$, this condition can be restated as follows:

			\begin{align*}
			 &\sum_{i=1}^{\infty}\prod_{j=1}^{i}\dfrac{b_j}{a_{j+1}}\sum_{k=1}^{i}\prod_{l=1}^{k}\dfrac{a_l}{b_l}<\infty\\
			\Leftrightarrow &\sum_{i=1}^{\infty}\mu(i+1)\sum_{j=1}^{i}\dfrac{1}{\mu(j)b_j}<\infty\\
			\end{align*}

		This proves that (\ref{borne-droite}) is a necessary condition. We do the same for (\ref{borne-gauche}), taking $\mu_{|\dbc{0,+\infty}}$ as an initial distribution for $X,\;X^*_0=\dbc{0,+\infty}$ almost surely and considering the left bound of the dual process $X^*$.

		\item[$\bullet$] \underline{Sufficient condition:}\\
		Conversely, let $\mu_0^*:=\sum_{p\in\Z}\mu_0(p)\delta_{\{p\}}$. We have $\mu_0=\mu_0^*\Lambda$ so there exists a $\Lambda$-linked dual $X^*$ of $X$ with generator $\L^*$ and initial distribution $\mu_0^*$. By Lemma \ref{couplage}, we can couple it with a process $(-\infty,Z)$ starting from $\dbc{-\infty,0}$ with generator $\L^*$, so that $(-\infty,Z)\subset X^*$. Thus when $Z$ reaches $+\infty,\; X^*$ is absorbed. This happens in finite time if (\ref{borne-droite}) holds. We proceed similarly for (\ref{borne-gauche}).
			
		\end{itemize}
	
	\end{proof}

\section{Random walk on a graph}\label{sec-marche-graphe}

The previous case may be seen as a random walk on a graph, where $\Z$ is a graph made of two infinite branches connected in $0$. We want to extend the result of the previous section to graphs having finitely many infinite branches, and another finite number of vertices. Such a graph is represented in Figure \ref{graphe}.

	\begin{figure}
	\centering
	\includegraphics{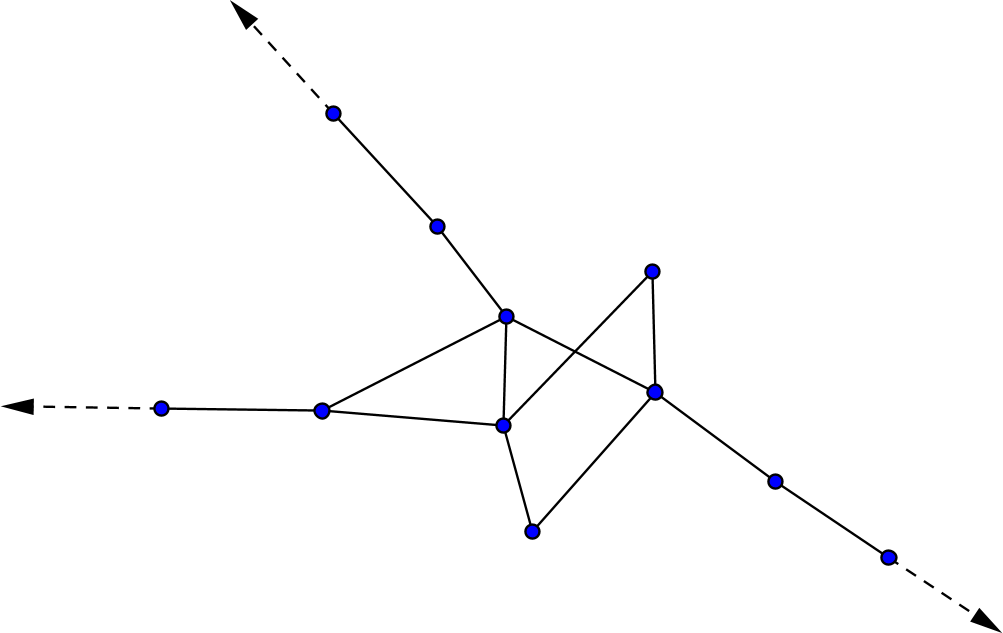}
	\caption{Example of considered graph}
	\label{graphe}
	\end{figure}

Thus we consider a generator $\L$ on a countable set $G$, irreducible and such that for all $x\in G$, the set $\{y\in G:\L_{x,y}>0 \text{ or }\L_{y,x}>0\}$ is finite. Let $(X_t)_{t\geq0}$ be a minimal process with generator $\L$ and call $\mu_0$ its initial distribution. We endow $G$ with the structure of a simple, undirected, weighted and connected graph, for which two distinct vertices $p$ and $q\in G$ are adjacent if and only if $\L_{p,q}\neq0$ or $\L_{q,p}\neq0$ (each vertex then has finite degree).

We assume that $\L$ is nonexplosive, positive recurrent and reversible, so that there exists a unique stationary distribution $\mu$, and we assume in addition that the sum $\sum\mu(p)\L_p$ is finite. We recall that $\mu$ satisfies (\ref{eq-reversible}):
	$$\mu(q)\L_{q,p}=\mu(p)\L_{p,q},\quad\quad\forall p,q\in G$$
and in particular, $\L_{p,q}\neq0$ implies $\L_{q,p}\neq0$. This equality as much as $\underset{q}{\sum}\L_{p,q}=0$ will be the basis of all computations.

Let us start by setting the vocabulary and notations that will be used in the sequel.

	\begin{defi}\label{def-graph}

		\begin{enumerate}[(i)]
		
		\item For every $Q\subset G$, we will write $Q^c$ for the complementary set of $Q$ in $G$.
		
		\item Two vertices of $G$ are \emph{adjacent} if they are connected by an edge.

		For $Q\subset G$ and $q\in Q$, we denote by $\mathrm{V}_Q(q)$ the set of all points of $Q$ adjacent to $q$, $\mathrm{V}(q):=\mathrm{V}_G(q)$ and $\mathrm{V}(Q):=\{q'\in\mathrm{V}_{Q^c}(q), q\in Q\}$.
		
		\item The \emph{degree} of a vertex is the number of its adjacent points.
		
		\item A \emph{path} between two vertices $p,q\in G$ is a sequence of vertices $p=p_1,\dots,p_n=q$ such that $p_i$ and $p_{i+1}$ are adjacent for every $1\leq i<n$, and that all $p_i$ are distinct.
		
		\item A subset $Q$ of $G$ is \emph{connected} if between any two vertices of $Q$ there exists a path in $Q$. It is convex if any path between two vertices of $Q$ is in $Q$.
		
		\item If $Q'\subset Q\subset G$, $Q'$ is a \emph{connected component} of $Q$ if $Q'$ is connected and maximal (for set inclusion) among connected subsets of $Q$. We write $\mathscr{C}(Q)$ for the set of all connected components of $Q$.
		
		\item We call \emph{center of G} and denote by $C_G$ the union of all paths between any two vertices with degrees different from two (namely, the \emph{convex envelope} of these vertices).
		
		\item We call \emph{infinite branch} any connected component (necessarily infinite) of $G\setminus C_G$. We denote $Q_i,\ i\in I$ these infinite branches of $G$.
		
		\item To each infinite branch $Q_i$, we add an infinite point, written $\Delta_i$. We write $\bar{Q}_i=Q_i\cup\{\Delta_i\}$.
		
		\item In the case where $C_G$ is non empty, if we endow $\N$ with a structure of graph for which two points $n$ and $m\in\N$ are adjacent if and only if $|n-m|=1$, then each $Q_i$ is isomorphic as a graph to $\N$ and we call $\varphi_i\colon\N\cup\{\infty\}\rightarrow \bar{Q}_i$ the unique corresponding isomorphism, completed by setting $\varphi_i(\infty)=\Delta_i$. We will also use the following notation:
			$$\varphi_i(\dbc{p,\infty}):=\underset{q\geq p}{\bigcup}\{\varphi_i(q)\}\cup\{\Delta_i\}$$
		
		\item We write as in (\ref{def-mu^i}):
			\begin{equation*}
			\mu^i(n):=\prod_{k=0}^{n-1}\dfrac{\L_{\varphi_i(k),\varphi_i(k+1)}}{\L_{\varphi_i(k+1),\varphi_i(k)}}=\dfrac{\mu(\varphi_i(k))}{\mu(\varphi_i(0))}
			\end{equation*}
			
		\end{enumerate}

	\end{defi}

We now assume that $C_G$ is finite, as well as the number of infinite branches ($I=\dbc{1,\dots,N}$ with $N\in\N$), and non empty (the case  where $C_G$ is empty corresponds to $G=\Z$, which has been treated in the previous section). With those assumptions, we set:
	$$\bar{G}=G\cup\bigcup_{i\in I}\{\Delta_i\}$$
and we define a distance $\bar{\mathrm{d}}$ on $\bar{G}$ as follows:

	\begin{itemize}
	
	\item If $p\in Q_i,\ q\in\bar{Q_i},\ i\in I$ with $\varphi_i^{-1}(p)\leq \varphi_i^{-1}(q)$: 
		$$\bar{\mathrm{d}}(q,p)=\bar{\mathrm{d}}(p,q)=\sum_{n=\varphi_i^{-1}(p)+1}^{\varphi_i^{-1}(q)}\dfrac{1}{n^2}$$
	
	\item If $p,q\in C_G$:
		$$\bar{\mathrm{d}}(p,q)=\inf\{\mathrm{card}(\gamma),\gamma \text{ path between } p\text{ and } q\}-1$$
	that is, the length of the shortest path (in particular, if $p$ and $q$ are adjacent, then $\bar{\mathrm{d}}(p,q)=1$).
	
	\item If $i\in I$ and $(p_i,q_i)$ is the unique adjacent pair in $Q_i\times C_G$, we set:
		$$\bar{\mathrm{d}}(q,p)=\bar{\mathrm{d}}(p,q):=1+\bar{\mathrm{d}}(p,p_i)+\bar{\mathrm{d}}(q_i,q),\ \forall p\in \bar{Q_i},\;\forall q\in C_G$$
	and:
		$$\bar{\mathrm{d}}(q,p)=\bar{\mathrm{d}}(p,q):=\bar{\mathrm{d}}(p,q_i)+\bar{\mathrm{d}}(q_i,q),\ \forall p\in \bar{Q_i}\,\ \forall q\in \bar{Q_j}\ (i\neq j)$$
	
	\end{itemize}

	\begin{rem}\label{rm-chemin}
	If $p_1,p_2,p_3\in Q_i,\ i\in I$, are three vertices of the same infinite branch, then $\bar{d}(p_1,\Delta_i)<\bar{d}(p_2,\Delta_i)<\bar{d}(p_3,\Delta_i)$, if and only if the unique path between $p_1$ and $p_3$ contains $p_2$.
	\end{rem}

One easily checks that $\bar{G}$ endowed with this metric is compact: the induced topology on each branch is that of the Alexandrov compactification.

Let $G^*$ be the set of all non empty connected compact subsets of $\bar{G}$, endowed with the Hausdorff distance:
	$$\mathrm{d}^*(Q,Q'):=\max\{\underset{p\in Q}{\sup}\;\underset{p'\in Q'}{\inf}\mathrm{\bar{d}}(p,p'),\ \underset{p'\in Q'}{\sup}\;\underset{p\in Q}{\inf}\mathrm{\bar{d}}(p,p')\}$$
It is a closed subspace of the space of compact subsets of $\bar{G}$ (endowed with the Hausdorff distance). We recall that the latter is compact, thus so is $G^*$.

\paragraph{}Let us describe briefly those two metric spaces.\\
Denoting by $B_{\mathrm{d}}(x,\varepsilon)$ (respectively $\bar{B}_{\mathrm{d}}(x,\varepsilon)$) the open (respectively closed) ball with center $x$ and radius $\varepsilon$ for a distance $\mathrm{d}$, we have:
	$$B_{\bar{\mathrm{d}}}(p,1/2)=\bar{B}_{\bar{\mathrm{d}}}(p,1/2)=\{p\},\quad\forall p\in C_G$$
	$$B_{\bar{\mathrm{d}}}(\varphi_i(p),\dfrac{1}{(p+1)^2})=\bar{B}_{\bar{\mathrm{d}}}(\varphi_i(p),\dfrac{1}{2(p+1)^2})=\{\varphi_i(p)\},\quad\forall p\in \N,\;\forall i\in I$$
	$$B_{\bar{\mathrm{d}}}(\Delta_i,\varepsilon)=\{\varphi_i(n):\;\sum_{j=n+1}^{+\infty}\dfrac{1}{j^2}<\varepsilon\}\quad\forall i\in I,\;\forall \varepsilon<\sum_{j=0}^{+\infty}\dfrac{1}{j^2}$$
	$$\bar{B}_{\bar{\mathrm{d}}}(\Delta_i,\varepsilon)=\{\varphi_i(n):\;\sum_{j=n+1}^{+\infty}\dfrac{1}{j^2}\leq\varepsilon\}\quad\forall i\in I,\;\forall \varepsilon<\sum_{j=0}^{+\infty}\dfrac{1}{j^2}$$
The sets above thus are the elementary open sets for the topology induced by the distance $\bar{\mathrm{d}}$. In particular, each singleton is closed (being the complementary set of a countable number of open sets), and each singleton of $G$ is also open. In addition, the $\Delta_i$ are the only limit points of $\bar{G}$.

The elements of $G^*$ are compact (hence closed) subsets of $\bar{G}$, that is, finite unions of singletons of $G$ and of neighborhoods of some $\Delta_i$ such as described earlier. 

The previous statements are easy to check from definition. The following result is not much harder:

	\begin{Lem}\label{voisinages}
	Let $Q\in G^*$, and $I_0:=\{i\in I:\;\Delta_i\in Q \}$. Then, for all $\eta>0$, there exists $\varepsilon>0$ such that for all $Q'\in B_{\mathrm{d}^*}(Q,\varepsilon)$:
		$$Q'\setminus\left(\underset{i\in I_0}{\bigcup}B_{\bar{\mathrm{d}}}(\Delta_i,\eta)\right)= Q\setminus\left(\underset{i\in I_0}{\bigcup}B_{\bar{\mathrm{d}}}(\Delta_i,\eta)\right)$$
	\end{Lem}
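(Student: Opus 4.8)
The plan is to reduce the statement to a finite matching problem, exploiting the fact that the only limit points of $\bar{G}$ are the $\Delta_i$: once neighbourhoods of \emph{all} the infinite points are deleted, whatever survives of a compact set is finite, and a finite set of isolated points is rigid under small Hausdorff perturbations. Concretely, write $U:=\bigcup_{i\in I_0}B_{\bar{\mathrm{d}}}(\Delta_i,\eta)$ and $K:=Q\setminus U$, so that the desired equality reads $Q'\setminus U=K$. First I would check that $K$ is finite: it is the intersection of the closed sets $Q$ and $U^c$, hence a closed — thus compact — subset of $\bar{G}$, and it contains no $\Delta_i$ (those with $i\in I_0$ lie in $U$, those with $i\notin I_0$ are absent from $Q$ by definition of $I_0$). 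Since the $\Delta_i$ are the only limit points of $\bar{G}$, every point of $K$ is isolated, so $K$ is compact and discrete, therefore finite.

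Next I would confine any nearby $Q'$ away from the remaining infinite points. For $j\in I\setminus I_0$ one has $\Delta_j\notin Q$, so $\beta_j:=\bar{\mathrm{d}}(\Delta_j,Q)>0$ (distance from a point to a compact set not containing it); set $\beta:=\min_{j\notin I_0}\beta_j>0$ (a vacuous constraint when $I_0=I$). If $\mathrm{d}^*(Q,Q')<\beta/2$, then $Q'$ cannot meet $B_{\bar{\mathrm{d}}}(\Delta_j,\beta/2)$: a point $p'\in Q'$ within $\beta/2$ of $\Delta_j$ would admit $p\in Q$ with $\bar{\mathrm{d}}(p,p')<\beta/2$, forcing $\bar{\mathrm{d}}(\Delta_j,p)<\beta\le\beta_j$, a contradiction. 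Consequently both $K$ and $Q'\setminus U$ lie in
\[
R:=\bar{G}\setminus\Big(U\cup\bigcup_{j\notin I_0}B_{\bar{\mathrm{d}}}(\Delta_j,\beta/2)\Big),
\]
and $R$ — compact and containing no $\Delta_i$ whatsoever — is finite by the same limit-point argument as for $K$.

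Finally I would close the argument on the finite set $R$. Each $x\in R$ lies in $G$, hence is isolated, so there is $r_x>0$ with $B_{\bar{\mathrm{d}}}(x,r_x)=\{x\}$; put $\delta:=\min_{x\in R}r_x>0$ and fix $\varepsilon:=\tfrac12\min(\beta,\delta)$. For the inclusion $K\subseteq Q'\setminus U$: given $x\in K\subseteq Q$ there is $p'\in Q'$ with $\bar{\mathrm{d}}(x,p')<\varepsilon<r_x$, hence $p'=x\in Q'$, and $x\notin U$. For the reverse inclusion $Q'\setminus U\subseteq K$: given $p'\in Q'\setminus U\subseteq R$ there is $p\in Q$ with $\bar{\mathrm{d}}(p,p')<\varepsilon<r_{p'}$, hence $p=p'\in Q$, and since $p'\notin U$ we get $p'\in Q\setminus U=K$. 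This yields $Q'\setminus U=K=Q\setminus U$ for every $Q'\in B_{\mathrm{d}^*}(Q,\varepsilon)$, as required.

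The delicate step is not the matching but the confinement. Points deep in a branch have singleton-radii $r_x$ shrinking to $0$, so no single $\varepsilon$ can rigidify all of $\bar{G}\setminus U$ at once, and a priori a perturbation $Q'$ could sprout spurious points accumulating at some $\Delta_j$ with $j\notin I_0$. The quantity $\beta=\min_{j\notin I_0}\bar{\mathrm{d}}(\Delta_j,Q)>0$ is precisely what excludes this, trapping $Q'\setminus U$ in the finite set $R$ so that only finitely many radii $r_x$ come into play and a uniform $\varepsilon$ exists.
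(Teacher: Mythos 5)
Your proof is correct, and it is worth comparing it with the paper's, which consists of a single line: take
$$\varepsilon<\inf\{\bar{\mathrm{d}}(p,q):\;p,q\notin\underset{i\in I_0}{\bigcup}B_{\bar{\mathrm{d}}}(\Delta_i,\eta),\;p\in Q,\;q\notin Q\},$$
i.e.\ the separation between $Q$ and its complement outside the $\eta$-balls, with every verification (positivity of this infimum, sufficiency of the choice) left implicit. You implement the same underlying rigidity idea --- outside the $\eta$-balls a compact subset of $\bar{G}$ is a finite set of isolated points --- but your quantitative safeguards are strictly stronger than the paper's threshold, and in fact necessary. The paper's infimum only ranges over pairs with $q\notin Q$, so it does not see the gap between a boundary point of $Q\setminus U$ and its neighbour \emph{inside} $Q$: for $Q=\varphi_i(\dbc{n,\infty})\cup\{\Delta_i\}$ with $1\le n\le n_\eta$ (so that $\varphi_i(n)\notin U$), the competitor $Q'=\varphi_i(\dbc{n+1,\infty})\cup\{\Delta_i\}$ lies at Hausdorff distance $1/(n+1)^2$ from $Q$, strictly below the paper's infimum $1/n^2$ (realized by the pair $(\varphi_i(n),\varphi_i(n-1))$), and yet $Q'\setminus U\neq Q\setminus U$; worse, when $Q\setminus U=\varnothing$ that infimum degenerates to $+\infty$ although the conclusion remains a genuine constraint on nearby $Q'$. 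Your choice $\varepsilon\le\delta/2$, with $\delta$ the minimal isolation radius over the finite set $R$ --- crucially over $R$ and not merely over $K=Q\setminus U$, which is exactly what makes the degenerate case $K=\varnothing$ come out right --- repairs both defects, and your constant $\beta$ supplies the confinement away from the $\Delta_j$, $j\notin I_0$, that makes $R$ finite in the first place (without it, as you note, no uniform $\varepsilon$ could control the shrinking singleton radii along the branches). In short: same strategy as the paper, but your writeup proves what the paper only asserts, and your threshold is the correct one where the paper's literal one can fail.
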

In other words, a "sufficiently close" neighbor of $Q$ must coincide with $Q$ outside the neighborhoods of the $\Delta_i\in G$. In particular, if $Q\in G^*$ is included in $G$, then $Q$ is an isolated point (in the sense that $\{Q\}$ is open).

	\begin{proof}
	One only needs to take:
	$$\epsilon<\inf\{\mathrm{\bar{d}}(p,q):\;p,q\notin\underset{i\in I_0}{\bigcup}B_{\bar{\mathrm{d}}}(\Delta_i,\eta),\;p\in Q,\; q\notin Q\}$$
	\end{proof}

\paragraph{}For every $i\in I\cup\{0\}$, we set:
	\begin{equation}
	G^*_i:=\{Q\in G^*:\;|\{j\in I:\;\Delta_j\in Q\}|=i \}\label{set-G_i}
	\end{equation}
the set of all non empty connected compact subsets of $G$ which contains exactly $i$ points $\Delta_j$.

\paragraph{}We define a generator $\L^*$ on $G^*$ by:

	\begin{equation}
	\forall Q\neq Q'\in G^*,\quad \L^*_{Q,Q'}:=
		\begin{cases}
		\left(\dfrac{\mu(Q')}{\mu(Q)}\right)\underset{q\in Q}{\sum}\L_{p,q} & \text{ if } Q'=Q\cup\{p\},\ p\notin Q\smallskip\\
		\left(\dfrac{\mu(Q')}{\mu(Q)}\right)\underset{q\notin Q}{\sum}\L_{p,q} & \text{ if } Q'\in\mathscr{C}(Q\setminus\{p\}),\ p\in Q\\
		0 & \text{ otherwise}
		\end{cases}
	\end{equation}
	
\noindent (we recall that $\mathscr{C}(Q)$ is the set of all connected components of $Q$).
	
We observe that the above coefficients are well defined, since if $Q'\in\mathscr{C}(Q\setminus\{p\})\cap\mathscr{C}(Q\setminus\{q\})$ with $p,q\in Q$, then necessarily $p,q\notin Q'$ and $Q'\cup\{p\},\;Q'\cup\{q\}$ are connected, so $p=q$.
	
The dynamic of a minimal process $X^*$ with generator $\L^*$ should be understood this way: at each jumping time, we add to $X^*$ an adjacent point, or we remove a point that is adjacent to its complementary set. When $X^*$ disconnects, we choose one of its connected components with a probability proportional to its weight relative to $\mu$.

From this, we deduce that the number of possible transitions is bounded: given $Q\in G^*$, the number of $Q\cup\{p\}$ with $p\in V(Q)\setminus Q$ is bounded by $N+|C_G|$ and the number of $Q'\in \mathscr{C}(Q\setminus\{p\})$, with $p\in V(Q^c)\cap Q$ is bounded (roughly) by $|C_G|^2$.

	\begin{Lem}
	$\L^*$ is a stratified generator on $G^*$, and $(G^*_0,\dots,G^*_N)$ is a stratification of $\L^*$.
	\end{Lem}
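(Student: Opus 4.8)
The plan is to verify the three requirements of Definition \ref{def-strat} for the family $(G^*_0,\dots,G^*_N)$, reading the \emph{rank} of a state $Q\in G^*$ as the number $|\{i\in I:\Delta_i\in Q\}|$ of infinite endpoints it contains, so that $G^*_i$ is exactly the stratum of rank $i$. Mutual disjointness of the $G^*_i$ is immediate from (\ref{set-G_i}). The substance of the lemma lies in the second and third conditions, and the whole strategy is to reduce the analysis near an infinite endpoint to the half-line birth-death dual already treated in Section \ref{sec-marche-Z}.

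For the second condition I would inspect the two kinds of transition permitted by $\L^*$. A transition of the first type replaces $Q$ by $Q\cup\{p\}$ with $p\in\mathrm V(Q)$; since $\L_{p,q}$ is defined only for vertices $p\in G$, such a $p$ is always a finite vertex, so the rank is unchanged. A transition of the second type removes a vertex $p\in Q$ adjacent to $Q^c$ and retains one component $Q'\in\C(Q\setminus\{p\})$; as each $\Delta_j\in Q$ belongs to exactly one component of $Q\setminus\{p\}$, the set $Q'$ contains at most as many $\Delta_j$ as $Q$. Hence no finite jump ever raises the rank, and a minimal process started in $G^*_i$ stays in $\bigsqcup_{j\le i}G^*_j$ up to its explosion time, which is precisely the second condition.

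The heart of the proof is the behaviour at an explosion time $\tau<\infty$. Since the rank of $X^*_t$ is non-increasing in $t$ and integer-valued, it is eventually constant, equal to some $j$, on an interval $]\tau-\varepsilon,\tau[$; this already exhibits the stratum $G^*_j$ in which the process sits just before $\tau$. Next I would show that explosion forces a frontier to run out to infinity: if, on some $]\tau-\delta,\tau[$, the trajectory stayed outside all the neighbourhoods $B_{\bar{\mathrm{d}}}(\Delta_i,\eta)$, then $X^*_t$ would be supported on the compact set $\bar G\setminus\bigcup_i B_{\bar{\mathrm{d}}}(\Delta_i,\eta)$, which contains only finitely many vertices; on such configurations $\L^*$ has finitely many states and bounded rates (by Lemma \ref{voisinages} the reachable configurations form a finite set) and so cannot explode. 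Therefore at least one outer bound must approach some $\Delta_i$ as $t\to\tau$.

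Finally I would establish that $\lim_{t\to\tau}X^*_t$ exists in $G^*$ and has strictly larger rank. Along each infinite branch the outer bound behaves as the birth-death dual on a half-line, so a bound that does not escape performs only finitely many jumps before $\tau$ and stabilises, while an escaping bound converges monotonically, in the sense of Remark \ref{rm-chemin}, to its endpoint $\Delta_i$. Consequently the center configuration and the non-escaping branches are constant on some $]\tau-\varepsilon,\tau[$, and $X^*_t$ converges for the Hausdorff distance $\mathrm{d}^*$ to the connected compact set obtained by adjoining the escaped $\Delta_i$, which lies in a stratum $G^*_{j'}$ with $j'>j$. The main obstacle is exactly this last point: making rigorous that explosion corresponds to one (or several) bounds reaching some $\Delta_i$ while everything else stabilises, and that the resulting limit is again a connected compact subset of $\bar G$. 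This is where the reduction to the already-analysed half-line dual of Section \ref{sec-marche-Z} carries the argument, the interaction through the finite center $C_G$ being controlled by the finiteness of the number of branches.
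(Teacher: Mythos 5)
Your verification of the first two conditions of Definition \ref{def-strat} is correct and matches the paper (no transition of $\L^*$ can increase the number of $\Delta_j$'s, since added points are vertices of $G$ and a retained connected component contains at most the $\Delta_j$'s of the original state). The genuine gap is in your treatment of the third condition, and it sits exactly where you flag it. Your plan rests on a per-branch dichotomy --- each outer bound either stabilises after finitely many jumps or converges to its endpoint $\Delta_i$ --- justified by ``reduction to the half-line dual of Section \ref{sec-marche-Z}''. But the branch bounds of the graph dual are \emph{not} Markov processes on their own: the jump rates $\frac{\mu(Q')}{\mu(Q)}\sum_q \L_{p,q}$ depend on the whole configuration $Q$, including the other branches and the center, so the birth-death analysis of Section \ref{sec-marche-Z} does not transfer verbatim. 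Making the comparison rigorous is precisely the content of the couplings of Lemmas \ref{couple-suffisante} and \ref{couple-necessaire}, which in the paper are constructed \emph{after} this lemma and are logically unavailable to its proof. Two further assertions are unproven or wrong as stated: an escaping bound does not converge ``monotonically'' (it fluctuates; only convergence in $\bar{\mathrm{d}}$ holds), and the claim that a non-escaping bound makes finitely many jumps before $\tau$ is itself the crux --- it needs an argument, e.g.\ that a fixed vertex would otherwise be added and removed infinitely often in finite time at a uniformly bounded rate. Finally, your finite-state non-explosion step silently assumes $X^*_t$ is contained in a finite vertex set; this fails for a start in $G^*_j$ with $j\geq 1$, where the state contains an infinite tail, so that case must be handled by tracking finitely many bounds rather than finitely many states.

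The paper avoids the per-branch analysis entirely by a topological argument you may want to adopt. Since $G^*$ is compact, the jump chain $(Y_n)$ has a limit point; if there were two distinct limit points $E_1\neq E_2$, pick $p\in G$ with $p\in E_1$, $p\notin E_2$: by Lemma \ref{voisinages} the chain would have to perform the transition $Q\mapsto Q\cup\{p\}$ infinitely many times before the finite explosion time, which is almost surely impossible because the rate of that transition is bounded uniformly over $Q$. Hence $Y_n\to E$ for a unique $E\in G^*_k$. Then Lemma \ref{voisinages} again, combined with connectedness of the $Y_n$, shows that any state close to $E$ but different from $E$ must omit some $\Delta_i\in E$, so the chain is eventually in strata of rank strictly below $k$; since rank-decreasing transitions occur finitely often, the trajectory lies in a single stratum $G^*_j$, $j<k$, on a left neighborhood of $\tau$. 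This yields existence of the limit, its membership in $G^*$, and the strict rank increase in one stroke --- no monotonicity, no comparison with autonomous birth-death chains, and no separate treatment of the center, whereas your route would require you to first build the couplings of Lemmas \ref{couple-suffisante} and \ref{couple-necessaire} and then still assemble the branch-wise limits into a Hausdorff limit in $G^*$.
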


	\begin{proof}The $G^*_i$ are disjoint and the coefficients of $\L^*$ prevent any transition from some set $G^*_i$ toward some $G^*_j$, if $j>i$. The first two conditions of Definition \ref{def-strat} are then fulfilled.
  	
	\paragraph{}Let $X^*$ be a trajectory of a minimal process with generator $\L^*$. We call $T_n,\;n\in\N$, its jumping times, $(Y_n)_{n\in\N}$ its jump chain and $T=\lim T_n$ its explosion time, that we assume to be finite. Thus $Y$ passes through infinitely many states, and visits each of them finitely many times. Since $G^*$ is compact, it follows that $Y$ has at least one limit point, which is a limit point of $G^*$. If it is unique, then $Y$ converges to this point. Let us assume by contradiction that there are two distinct limit points, that we call $E_1$ and $E_2$, respective limits of two subsequences $(Y_{\phi_1(n)})_n$ and $(Y_{\phi_2(n)})_n$ of $(Y_n)_n$. The idea of what follows is to show that we cannot go infinitely many times from one subsequence to the other one.
  	
	Since $E_1$ are $E_2$ two connected compact subsets of $\bar{G}$ and $E_1\neq E_2$, there exists $p\in G$ such that $p\in E_1$ and $p\notin E_2$ (or the other way round).
  	  	
	\paragraph{}From Lemma \ref{voisinages}, there exist $n_1,n_2\in\N$ such that $p\in Y_{\phi_1(n)},\ \forall n>n_1$ and $p\notin Y_{\phi_2(n)},\ \forall n>n_2$. After the time $\max\{\phi_1(n_1),\phi_2(n_2)\}$ the chain $Y$ jumps infinitely many times from a state $Q$ containing an adjacent point of $p$ toward the state $Q\cup\{p\}$. This is impossible almost surely, the rate of such a transition being upper bounded by:
		$$\left(1+\underset{q\in V(p)}{\sup}\dfrac{\mu(p)}{\mu(q)}\sum_{q'\in V(p)}\L_{p,q'}\right)<\infty$$
		
	\paragraph{}Thus, the sequence $(Y_n)_n$ has an almost surely unique limit $E\in G^*_k,\; k\in I\cup\{0\}$. Let $I_0:=\{i\in I:\;\Delta_i\in E \}$. From Lemma \ref{voisinages}, for all $\eta>0$, there exists $n_1$ such that for every $n>n_1$:
		$$Y_n\setminus\left(\underset{i\in I_0}{\bigcup}B_{\bar{\mathrm{d}}}(\Delta_i,\eta)\right)=E\setminus\left(\underset{i\in I_0}{\bigcup}B_{\bar{\mathrm{d}}}(\Delta_i,\eta)\right)$$
	In particular, if $i\notin I_0$, then $\Delta_i\notin Y_n$. In addition, there exists $n>n_1$ such that $Y_n\neq E$, and since $Y_n$ is connected, there exists $i\in I_0$ such that $\Delta_i\notin Y_n$. There are therefore less $\Delta_i$ in $Y_n$ than in $E$, that is, $Y_n\in G^*_j$ with $j<k$.
	
	Now, the number of transitions of the minimal process from some state in $G^*_i$ toward some other one in $G^*_j,\; i\neq j$, is finite a.s. (since they are only possible if $j<i$) and thus there exists a last one before the explosion. Let $T_{n_2}$ be the time of this last transition, and $G^*_j,\;j<k,$ the arrival set ($Y_{n_2}\in G^*_j$), then, for all $t\in\left[T_{n_2},T\right[,\;X_t^*\in G^*_j$. So the third condition is also satisfied.
	\end{proof}

\paragraph{}We define the kernel $\Lambda(\cdot,\cdot)$ by:
	$$\forall Q\in G^*,\ p\in G,\quad \Lambda(Q,p)=\dfrac{\mu(p)\delta_p(Q)}{\mu(Q)}$$
$\Lambda$ is continuous in the first variable, and:

	\begin{Lem}
	$\L$ and $\L^*$ satisfy $\L^*\Lambda=\Lambda\L$.
	\end{Lem}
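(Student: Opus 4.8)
The plan is to prove the identity entry by entry: I fix $Q\in G^*$ with $\mu(Q)\in(0,\infty)$ and $n\in G$, and show $(\L^*\Lambda)_{Q,n}=(\Lambda\L)_{Q,n}$. Both entries are finite sums, so there are no convergence issues: the right-hand side because $\L_{m,n}\neq0$ only for $m=n$ or $m$ adjacent to $n$, and the left-hand side because each row of $\L^*$ has at most $N+|C_G|+|C_G|^2$ nonzero entries, as already observed. Finiteness of $\mu(Q)$ comes from positive recurrence (each branch carries summable $\mu$-mass), so together with the standing assumption $\sum_p\mu(p)\L_p<\infty$ all the matrix products below are legitimate.

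I would first compute the right-hand side. Writing $(\Lambda\L)_{Q,n}=\sum_{m\in Q}\frac{\mu(m)}{\mu(Q)}\L_{m,n}$, applying reversibility $\mu(m)\L_{m,n}=\mu(n)\L_{n,m}$, and using $\L_{n,n}=-\sum_{m\sim n}\L_{n,m}$, I obtain the two closed forms
$$(\Lambda\L)_{Q,n}=\frac{\mu(n)}{\mu(Q)}\sum_{m\in Q}\L_{n,m}\ \ (n\notin Q),\qquad (\Lambda\L)_{Q,n}=-\frac{\mu(n)}{\mu(Q)}\sum_{m\notin Q}\L_{n,m}\ \ (n\in Q).$$
These are the targets the left-hand side must reproduce.

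For the left-hand side the key structural remark is that in every off-diagonal product $\L^*_{Q,Q'}\Lambda(Q',n)$ the factor $\mu(Q')$ cancels, since $\L^*_{Q,Q'}$ carries $\mu(Q')/\mu(Q)$ while $\Lambda(Q',n)$ carries $\mu(n)/\mu(Q')$. Splitting the transitions into the growing ones ($Q'=Q\cup\{p\}$, $p\notin Q$) and the shrinking ones ($Q'\in\mathscr{C}(Q\setminus\{p\})$, $p\in Q$), and collapsing the sum over components via $\sum_{Q'\in\mathscr{C}(Q\setminus\{p\})}\delta_n(Q')=\mathbf{1}[n\in Q\setminus\{p\}]$, I would write $(\L^*\Lambda)_{Q,n}=\frac{\mu(n)}{\mu(Q)}\big(\mathrm{add}+\mathrm{rem}\big)+\L^*_{Q,Q}\Lambda(Q,n)$, where $\mathrm{add}$ runs over the points adjacent to $Q$ and $\mathrm{rem}$ over the boundary points of $Q$. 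The diagonal is then expanded through $\L^*_{Q,Q}=-\sum_{Q'\neq Q}\L^*_{Q,Q'}$, using $\mu(Q\cup\{p\})=\mu(Q)+\mu(p)$ and $\sum_{Q'\in\mathscr{C}(Q\setminus\{p\})}\mu(Q')=\mu(Q)-\mu(p)$.

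The step I expect to be the crux is the cancellation of the two cross terms generated by the diagonal, namely $\sum_{p\notin Q,\,q\in Q}\mu(p)\L_{p,q}$ and $\sum_{p\in Q,\,q\notin Q}\mu(p)\L_{p,q}$. These are equal by reversibility: applying $\mu(p)\L_{p,q}=\mu(q)\L_{q,p}$ to one and relabelling $p\leftrightarrow q$ turns it into the other, so after division by $\mu(Q)$ they annihilate. Once this is in hand, in the case $n\notin Q$ the bracket reduces to $\sum_{m\in Q}\L_{n,m}$ (the $p=n$ growing term, all other contributions vanishing), while in the case $n\in Q$ the diagonal cancels the entire $\mathrm{add}$ sum and combines with $\mathrm{rem}$ to leave exactly $-\sum_{m\notin Q}\L_{n,m}$. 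Matching these against the two forms of $(\Lambda\L)_{Q,n}$ above closes the proof; I would present the two cases $n\in Q$ and $n\notin Q$ separately, since the diagonal and shrinking contributions are active only in the first.
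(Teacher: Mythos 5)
Your proof is correct and takes essentially the same route as the paper's: an entrywise verification of $(\L^*\Lambda)_{Q,n}=(\Lambda\L)_{Q,n}$ split into the cases $n\notin Q$ and $n\in Q$, resting on the cancellation of $\mu(Q')$ in the off-diagonal products, reversibility, and the zero row sums of $\L$. The only organizational difference is that you compute the closed forms of $(\Lambda\L)_{Q,n}$ first and make the reversibility-based cancellation of the two cross terms explicit, a step the paper performs implicitly when it rewrites the diagonal contribution $\L^*_{Q,Q}\Lambda(Q,p)$ as $\frac{\mu(p)}{\mu(Q)}\bigl(\sum_{q\notin Q}\sum_{q'\notin Q}\L_{q,q'}+\sum_{q\in Q}\sum_{q'\in Q}\L_{q,q'}\bigr)$.
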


	\begin{proof}
	Let $Q\in G^*$ and $p\in G$. If $p\notin Q$, then:

		\begin{align*}
		\sum_{Q'\in G^*}\L^*_{Q,Q'}\Lambda(Q',p)&=\sum_{q\in G}\L^*_{Q,Q\cup \{q\}}\dfrac{\mu(p)\delta_p(Q\cup q)}{\mu(Q\cup\{q\})}\\
		&=\L^*_{Q,Q\cup \{p\}}\dfrac{\mu(p)}{\mu(Q\cup\{p\})}\\
		&=\dfrac{\mu(p)}{\mu(Q)}\sum_{q\in Q}\L_{p,q}\\
		&=\sum_{q\in Q}\dfrac{\mu(q)}{\mu(Q)}\L_{q,p}\\
		&=\sum_{q\in Q}\Lambda(Q,q)\L_{q,p}
		\end{align*}

	On the other hand, if $p\in Q$, then:

		\begin{align*}
		\sum_{Q'\in G^*}\L^*_{Q,Q'}\Lambda(Q',p)&=\sum_{q\notin Q}\L^*_{Q,Q\cup\{q\}}\dfrac{\mu(p)}{\mu(Q\cup\{q\})}+\L^*_{Q,Q}\Lambda(Q,p)\\
		&\quad\quad+\sum_{q\neq p\in Q}\;\sum_{Q'\in\mathscr{C}(Q\setminus\{q\})}\L^*_{Q,Q'}\dfrac{\mu(p)}{\mu(Q\setminus\{q\})}\\
		&=\sum_{q\notin Q}\sum_{q'\in Q}\dfrac{\mu(p)}{\mu(Q)}\L_{q,q'}+\sum_{q\neq p\in Q}\sum_{q'\notin Q}\dfrac{\mu(p)}{\mu(Q)}\L_{q,q'}\\
		&\quad\quad+\dfrac{\mu(p)}{\mu(Q)}\left(\underset{q\notin Q}{\sum}\underset{q'\notin Q}{\sum}\L_{q,q'}+\underset{q\in Q}{\sum}\underset{q'\in Q}{\sum}\L_{q,q'}\right)\\
		&=\dfrac{\mu(p)}{\mu(Q)}\left(\sum_{q\notin Q, q'\in G}\L_{q,q'}+\sum_{q\neq p\in Q,q'\in G}\L_{q,q'}+\sum_{q'\in Q}\L_{p,q'}\right)\\
		&=\sum_{q\in Q}\dfrac{\mu(p)}{\mu(Q)}\L_{p,q}\\
		&=\sum_{q\in Q}\dfrac{\mu(q)}{\mu(Q)}\L_{q,p}
		\end{align*}

	\noindent We then have proved that $(\L^*\Lambda)_{Q,p}=(\Lambda\L)_{Q,p}$.
	\end{proof}

\paragraph{}Thanks to the previous two lemmas and Theorem \ref{cond-dual2} we deduce that whatever the initial distribution $\mu_0$ of $X$, there exists a stratified jump process $X^*$ with generator $\L^*$, which is $\Lambda$-linked to $X$ (taking for an initial distribution of $X^*$, for example, $\mu^*_0(A)=\underset{x\in\S}{\sum}\mu_0(x)\ind_{A=\{x\}}$ for all $A\in\S^*$). In that case, by definition of $\Lambda$, $X^*$ is also a strong stationary dual of $X$.
	
One will have noticed that a process $X^*$ with generator $\L^*$ has $N+1$ absorbing states: each of the singletons $\{\Delta_i\}$ and $\bar{G}$. However, by intertwining $X$ and $X^*$ with $\Lambda$, we get:
	\begin{align}
	\nonumber 0=&\P(X_t=\Delta_i\ |\ X^*_0\neq\{\Delta_i\})\\
	\nonumber \geq&\P(X_t=\Delta_i\ |\ X^*_t=\{\Delta_i\},\ X^*_0\neq\{\Delta_i\})\;\P(X^*_t=\{\Delta_i\}\ |\ X^*_0\neq\{\Delta_i\})\\
	\nonumber =&\P(X_t=\Delta_i\ |\ X^*_t=\{\Delta_i\})\;\P(X^*_t=\{\Delta_i\}\ |\ X^*_0\neq\{\Delta_i\})\\
	=&\P(X^*_t=\{\Delta_i\}\ |\ X^*_0\neq\{\Delta_i\})\label{absorbant}
	\end{align}
since $X$ is nonexplosive. We conclude that the only absorbing state that may be reached in finite time is $\bar{G}$.

\paragraph{}It remains to study the time to absorption of $X^*$. More precisely, we will study the behavior of the bounds of $X^*$ on each infinite branch, by comparing them with birth-death processes on $\N$.
	
For every $i\in I$, and every $p\in\N$, we set:
	\begin{equation}
	G^i_p:=\bar{G}\setminus \varphi_i(\dbc{p+1,\infty})=\underset{j\neq i}{\bigcup}\bar{Q}_j\cup C_G\cup\varphi_i(\dbc{0,p})\in G^*\label{set-G-p}
	\end{equation}
(that is, the point of $G^*$ containing all $\bar{G}$ except the points of $\bar{Q}_i$ beyond $\varphi_i(p)$), and:
	$$\begin{cases}
	\L^i_{p,p+1}:=\left(\dfrac{\mu(G^i_{p+1})}{\mu(G^i_p)}\right)\L_{\varphi_i(p+1),\varphi_i(p)}=\L^*_{G^i_p,G^i_{p+1}}\medskip\\
	\L^i_{p+1,p}:=\left(\dfrac{\mu(G^i_{p})}{\mu(G^i_{p+1})}\right)\L_{\varphi_i(p+1),\varphi_i(p+2)}=\L^*_{G^i_{p+1},G^i_{p}}\\
	\L^i_{p,p}=-\L^i_{p,p+1}-\L^i_{p,p-1}
	\end{cases}$$
which define generators $(\L^i_{p,q})_{p,q\in\N}$ of birth-death processes on $\N$.

	\begin{Lem}\label{couple-suffisante}
	There exists $\lambda>0$ such that, for every $i\in I$ and every probability measure $\mu_0^*$ on $G^*$, there exists a family $(X^*,Y,Z^i)$ such that:
	
		\begin{itemize}
		\item The couple $(X^*,Z^i)$ is Markovian, and its marginals $X^*$ and $Z^i$ are Markov processes, with respective generators $\L^*$ and $\L^i$.
		\item $Y$ is an exponential random variable with parameter $\lambda$, independent from $Z^i$.
		\item $X^*$ has initial distribution $\mu_0^*$.
		\item For every $t>s>0$:
			$$\P(X^*_t\cap\varphi_i(\dbc{Z^i_t,\infty})\neq\varnothing\ |\ t<Y,\;X^*_0\nsubseteq G^i_0)=1$$
			$$\P(X^*_s\cap C_G\subset X^*_t\ |\ t<Y)=1$$
		\end{itemize}
	
	\end{Lem}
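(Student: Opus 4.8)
The plan is to build the family $(X^*,Y,Z^i)$ by \emph{splitting the jumps of $X^*$ into two groups}. Call a transition of $X^*$ \emph{dangerous} if it removes a point of $C_G$ or removes one of the attachment vertices $\varphi_j(0)$, $j\in I$, and call every other transition \emph{safe}. Both conclusions of the lemma reduce to the single statement \emph{``no dangerous jump has occurred before $Y$''}. Indeed, a point of $C_G$ can leave $X^*$ only by being the removed vertex of a transition, because the other removals (of a reach endpoint $\varphi_j(b_j)$ with $b_j\ge1$) merely shrink an interval without disconnecting it and hence never force a component choice that discards a center point; so before the first dangerous jump $X^*_\cdot\cap C_G$ is nondecreasing, which gives the second displayed identity. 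For the first identity I would introduce the \emph{reach} $R_t:=\sup\{m\in\N:\varphi_i(m)\in X^*_t\}$ (with $R_t=-1$ when $X^*_t\cap Q_i=\varnothing$), and dominate $R_t$ from below by $Z^i_t$, since $R_t\ge Z^i_t$ implies $X^*_t\cap\varphi_i(\dbc{Z^i_t,\infty})\neq\varnothing$.

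The key local estimate is that, while $R_t=p\ge1$, the only safe transitions changing $R$ are the addition of $\varphi_i(p+1)$, at rate $\frac{\mu(X^*_t)+\mu(\varphi_i(p+1))}{\mu(X^*_t)}\L_{\varphi_i(p+1),\varphi_i(p)}$, and the removal of $\varphi_i(p)$, at rate $\frac{\mu(X^*_t)-\mu(\varphi_i(p))}{\mu(X^*_t)}\L_{\varphi_i(p),\varphi_i(p+1)}$. Because $R_t=p$ forces $X^*_t\subseteq G^i_p$ and hence $\mu(X^*_t)\le\mu(G^i_p)$, and because $x\mapsto(x+c)/x$ decreases while $x\mapsto(x-c)/x$ increases, these two rates are respectively $\ge\L^i_{p,p+1}$ and $\le\L^i_{p,p-1}$. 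The one remaining way for $R$ to decrease, the removal of $\varphi_i(0)$ which empties the branch, is precisely a dangerous transition; isolating it is exactly what saves the domination at the reflecting boundary $0$, where $Z^i$ cannot descend.

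I would then assemble a Markovian coupling generator as follows. Summing $\frac{\mu(Q')}{\mu(Q)}\sum_{q\notin Q}\L_{p,q}\le\L_p$ over the target components $Q'$ and then over the finitely many removable vertices $p\in C_G\cup\{\varphi_j(0):j\in I\}$ yields a universal bound $\lambda:=\sum_{p\in C_G}\L_p+\sum_{j\in I}\L_{\varphi_j(0)}<\infty$ on the total dangerous rate, uniform in the state and independent of $i$ and $\mu_0^*$. I simulate $X^*$ by running all safe transitions with their own clocks (following the stratified construction, so that reaching $\Delta_i$ is allowed) and realising the dangerous ones as a thinning of an independent Poisson clock $\mathcal N$ of rate $\lambda$; this reproduces the law of the process with generator $\L^*$, and I set $Y$ to be the first point of $\mathcal N$, so $Y\sim\exp(\lambda)$. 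The slave process $Z^i$, started at $Z^i_0=1$, is coupled only to the \emph{safe} branch-$i$ moves of $X^*$ by the usual monotone coupling of birth-death dynamics (common up-moves at the smaller up-rate $\L^i_{p,p+1}$ and common down-moves at the smaller down-rate, that of $R$, together with independent residual moves restoring the exact rates $\L^i$); dangerous moves are never shared with $Z^i$. Since on $\{X^*_0\nsubseteq G^i_0\}$ one has $R_0\ge1=Z^i_0$, and since before $Y$ the boundary death of $R$ is suppressed, the monotone coupling keeps $R_t\ge Z^i_t$ for all $t<Y$.

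Finally I would verify the three formal clauses: $(X^*,Z^i)$ is jointly Markov with the stated marginals by construction ($X^*$ is autonomous with generator $\L^*$, and $Z^i$ has exact generator $\L^i$); $Y$ is $\exp(\lambda)$ and independent of $Z^i$ because $Z^i$ is driven only by the safe-move clocks and its independent residual clocks, all independent of $\mathcal N$; and the two displayed identities follow from the absence of dangerous jumps on $\{t<Y\}$ via the two previous paragraphs. The hard part is exactly this bookkeeping: the naive monotone coupling fails at level $0$, where $R$ can fall to the empty branch at positive rate while $Z^i$ reflects, and the center may be lost at the same time; resolving both by declaring all center removals \emph{and} all attachment removals dangerous, absorbing them into a single clock $Y$ of the universal rate $\lambda$, while feeding $Z^i$ only the safe moves so as to keep $Y$ independent of $Z^i$, is the delicate point and is what dictates the particular choice of $\lambda$.
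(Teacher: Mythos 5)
Your construction is correct, and its skeleton is the same as the paper's: the paper likewise partitions the transitions of $X^*$ into far-end moves on branch $i$ (there called $A^{\pm}_i$), removals of center points (called $B$), and a remainder $C_i$; bounds the total rate of the blocked class uniformly in the state; realises the blocked moves as a thinning of an independent rate-$\lambda$ Poisson clock whose first point is $Y$; and dominates the reach by $Z^i$ through exactly your two inequalities $\L^*_{Q,A^+_i(Q)}\geq\L^i_{p,p+1}$ and $\L^*_{Q,A^-_i(Q)}\leq\L^i_{p,p-1}$, obtained as you obtain them from $\mu(Q)\leq\mu(G^i_p)$ and the monotonicity of $x\mapsto(x\pm c)/x$. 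The genuine difference is your enlarged dangerous class, and it buys something real. The paper suppresses only the $B$ transitions before $Y$ and couples the $A^-_i$ move with a down-step of $Z^i$ through an explicit transition menu; at the boundary that menu breaks down: when $Z^i_{T_n}=p=0$ and $\varphi_i(0)$ is the far end of $X^*_{T_n}$, the removal of $\varphi_i(0)$ is an $A^-_i$ transition (not a $B$ one, since $\varphi_i(0)\notin C_G$) with positive rate, while $\L^i_{0,-1}=0$ for the birth-death process on $\N$, so the paper's list assigns the negative probability $(\L^i_{0,-1}-\L^*_{Q,A^-_i(Q)})/\Lambda^2_n$ and a move sending $Z^i$ to $-1$; this case is left unaddressed there. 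Declaring all removals of the attachment vertices $\varphi_j(0)$, $j\in I$, dangerous is precisely what repairs it: before $Y$ the reach cannot drop below $0$, so the monotone coupling survives the reflecting boundary, and keeping the class $i$-independent preserves a single $\lambda$ for all branches, as the statement requires. Your bound $\lambda=\sum_{p\in C_G}\L_p+\sum_{j\in I}\L_{\varphi_j(0)}$, obtained by collapsing the component choice via $\sum_{Q'\in\mathscr{C}(Q\setminus\{p\})}\mu(Q')\leq\mu(Q)$, is also simpler and more explicit than the paper's bound $a_2a_3/a_1$. Two remarks at the same level of rigour as the paper itself: strictly speaking it is the triple $(X^*,\mathcal{N},Z^i)$ that is Markov, since $\{t<Y\}$ is not measurable with respect to $(X^*,Z^i)$ --- the paper makes the same identification when it constructs $(X^*,N,Z^i)$; and the independence of $Y$ from $Z^i$ should be spelled out as you sketch it, namely that conditionally on the whole clock $\mathcal{N}$ the jump rates of $Z^i$ are exactly those of $\L^i$ at all times, so its conditional law does not depend on $\mathcal{N}$.
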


	\begin{proof}
	We are first going to distinguish different kinds of transition for the process $X^*$, when it is at a state $Q\in G^*$ such that $Q\cap Q_i\ne\varnothing$. The first kind is that which changes $X^*$ only by a point of $Q_i$. It concerns the transitions from $Q$ to $Q\cup\{\varphi_i(p)\}$, or to $Q\setminus\{\varphi_i(p-1)\}$, where $p=\sup\{q\in\N:\;\varphi_i(q)\in Q\}+1$. These transitions "make us one step closer to or further from" a state containing the point $\Delta_i$. We will thus denote $A^+_i(Q)=Q\cup\{\varphi_i(p)\}$, $A^-_i(Q)=Q\setminus\{\varphi_i(p-1)\}$ and $A_i(Q)=\{A^+_i(Q),\;A^-_i(Q)\}$.
	
	The second kind of transitions may remove several points at once. To do so, there must exist $q\in Q\in G^*$ such that $Q\setminus\{q\}$ is disconnected and $q\in V(Q^c)$. This is only possible if $q\in C_G$. Thus, we naturally set:
		$$B(Q):=\{Q'\in\mathscr{C}(Q\setminus\{q\}):\;q\in Q\cap C_G\}$$
		
	The third kind gathers all other transitions, those which do not interfere in our problematic:
		$$C_i(Q)=\{Q'\in G^*:\L_{Q,Q'}>0,\;Q'\notin A_i(Q)\cup B(Q)\}$$
	Those transitions remove a point of $Q_j,\;j\neq i$ or add a point different from $\varphi_i(p)$.
	
	We will talk about a $A^+_i$ (resp. $A^-_i$, resp. $B$, resp. $C$) transition for a transition from $Q$ to some $Q'=A^+_i(Q)$ (resp $Q'=A^-_i$, resp. $Q'\in B(Q)$, resp. $Q'\in C_i(Q)$).
	
	\paragraph{}We observe that the set $\{\L^*_{Q,Q'}:Q\in G^*,\;Q'\in B(Q)\}$ is bounded. Indeed, if $Q\in G^*$ is such that $B(Q)$ is non empty, then it contains a point of $C_G$, so:
		$$\mu(Q)\geq\inf\{\mu(p):\;p\in C_G\}=:a_1$$
	and if $p_0\in Q\cap C_G$, then:
		$$\sum_{q\notin Q}\L_{q,p_0}\leq\underset{p\in C_G}{\sup}\sum_{q\in V(C_G)}\L_{q,p}=:a_2$$
	which gives us for all $Q'\in \mathscr{C}(Q\setminus\{p_0\})$:
	
		\begin{align*}
		\L^*_{Q,Q'}&=\dfrac{\mu(Q')}{\mu(Q)}\sum_{q\notin Q}\L_{q,p_0}\\
		&\leq \dfrac{1}{a_1}a_2
		\end{align*}
		
	The set $\{|B(Q)|:\;Q\in G^*\}$ is also bounded by a constant $a_3$ (roughly, $N+|C_G|$), so we deduce:
		$$\sum_{Q'\in B(Q)}\L^*_{Q,Q'}\leq \dfrac{a_2a_3}{a_1}=:\lambda$$
	and $\lambda$ does not depend on $Q$.
	
	We finally observe that for every $p\in\N$, if $\varphi_i(p)\in Q$ and $\varphi_i(p+1)\notin Q,\; Q\in G^*$, then $\L^*_{Q,A^+_i(Q)}\geq \L^i_{p,p+1}$ and $\L^*_{Q,A^-(Q)}\leq \L^i_{p,p-1}$.
	
	\paragraph{}We are now going, thanks to the above observations, to construct the family $(X^*,Y,Z^i)$ by preventing any $B$ transition before time $Y$, and by forcing $Z^i$ to go down in case of a $A^-_i$ transition or by setting a $A^+_i$ transition if $Z^i$ goes up, when necessary. Of course, this construction is only made when $X^*_0$ contains at least one point of $Q_i$.
	
	\paragraph{Construction of the family $(X^*,Y,Z^i)$:}
	In order to be reduced to a Markov process, we will construct instead $(X^*,N,Z^i)$, where $N$ is a Poisson process with intensity $\lambda$. $Y$ is then the first jumping time of $N$. Firstly, we draw $X^*_0$ with distribution $\mu^*_0$. If $X^*_0\cap Q_i=\varnothing$, we can stop the construction. Otherwise, we set $Z^i_0=\sup\{p\in\N:\varphi_i(p)\in X^*_0\}$ and $T_0=0$.
	
	We assume $(X^*,N,Z^i)$ to be constructed up to the time $T_n,\;n\geq 0$, $Z^i_{T_n}=p$ and $X^*_{T_n}=Q$. If $N_t\geq1$, the construction is over and we let $X^*$ and $Z$ evolve independently. Otherwise, we separate two cases:
	
		\begin{itemize}
		\item \emph{First case: $\varphi_i(Z^i_{T_n})\notin A^-_i(X^*_{T_n})$}

		This is the case where $Z^i_{T_n}$ is "at the far end" of $X^*_{T_n}$, and at this point we must be careful to couple $Z^i$ and $X^*$ so that $Z^i$ stays "inside" at time $T_{n+1}$. We thus draw an exponential variable $\epsilon^1_n$ with parameter $\lambda$ and another one, $\epsilon^2_n$, independent from $\epsilon^1_n$ and with parameter:
			$$\Lambda^2_n:=\sum_{Q'\in C_i(Q)}\L^*_{Q,Q'}+\L^*_{Q,A_i^+(Q)}+\L^i_{p,p-1}$$
		We set $T_{n+1}=T_n+\epsilon^1_n\wedge\epsilon^2_n$. If $\epsilon^1_n<\epsilon^2_n$, we enforce $N_{T_{n+1}}=1$ and choose $X^*_{T_{n+1}}=Q'$ with probability $\dfrac{\L^*_{Q,Q'}}{\lambda}$, for all $Q'\in B(Q)$. The sum of these probabilities may be less than $1$, so if no state is chosen we stay in $Q$.
		
		If $\epsilon^1_n>\epsilon^2_n$, we leave $N_{T_{n+1}}$ at $0$, and we set:
			$$(X^*_{T_{n+1}},Z^i_{T_{n+1}})=\begin{cases}
			(Q',p)\quad\text{with probability }\dfrac{\L^*_{Q,Q'}}{\Lambda^2_n},\;Q'\in C_i(Q)\smallskip\\
			(Q',p+1)\quad\text{with probability }\dfrac{\L^i_{p,p+1}}{\Lambda^2_n},\;Q'=A^+_i(Q)\smallskip\\
			(Q',p)\quad\text{with probability }\dfrac{\L^*_{Q,Q'}-\L^i_{p,p+1}}{\Lambda^2_n},\;Q'=A^+_i(Q)\\
			(Q,p-1)\quad\text{with probability }\dfrac{\L^i_{p,p-1}-\L^*_{Q,A^-_i(Q)}}{\Lambda^2_n}\\
			(Q',p-1)\quad\text{with probability }\dfrac{\L^*_{Q,A^-_i(Q)}}{\Lambda^2_n},\;Q'=A^-_i(Q)\\			
			\end{cases}$$
			
		\item \emph{Second case: $\varphi_i(Z^i_{T_n})\in A^-_i(X^*_{T_n})$}
		
		In that case, we do not need to ensure that "the far end" of $X^*$ stays above $Z^i$, but only to check if a $B$ transition occurs. Consequently, we draw a exponential variable $\epsilon^1_n$ with parameter $\lambda$ and another one, $\epsilon^2_n$, independent from $\epsilon^1_n$ and with parameter:
			$$\Lambda^2_n:=\sum_{Q'\in C_i(Q)\cup A_i(Q)}\L^*_{Q,Q'}+\L^i_{p,p+1}+\L^i_{p,p-1}$$
		We set $T_{n+1}=T_n+\epsilon^1_n\wedge\epsilon^2_n$. If $\epsilon^1_n<\epsilon^2_n$, we proceed exactly as in the first case. Otherwise, we chose:
			$$(X^*_{T_{n+1}},Z^i_{T_{n+1}})=\begin{cases}
			(Q',p)\quad\text{with probability }\dfrac{\L^*_{Q,Q'}}{\Lambda^2_n},\;Q'\in C_i(Q)\cup A_i(Q)\smallskip\\
			(Q,p+1)\quad\text{with probability }\dfrac{\L^i_{p,p+1}}{\Lambda^2_n}\smallskip\\
			(Q,p-1)\quad\text{with probability }\dfrac{\L^i_{p,p-1}}{\Lambda^2_n}
			\end{cases}$$
		By construction, we do have $X^*_t\cap\varphi_i(\dbc{Z^i_t,\infty})\neq\varnothing$ if $t<Y$, and $Y$ and $Z^i$ independent, until the first explosion time $\tau_1:=\lim T_n$. One checks that the marginal processes have the desired generators. Starting from the explosion time, we carry on the construction the same way.
		\end{itemize}
		
	\end{proof}

	\begin{Lem}\label{condition-suffisante}
	For all $i\in I$ if $Z^i$ is a process with generator $\L^i$, then the explosion time of $Z^i_t$ is finite almost surely if and only if the condition (\ref{borne-infini}):
		\begin{equation*}
		\sum_{j=1}^{\infty}\mu^i(j+1)\sum_{k=1}^{j}\dfrac{1}{\mu^i(k)\L_{\varphi_i(k),\varphi_i(k+1)}}<\infty
		\end{equation*}
	holds. In that case, $Z^i$ goes to $+\infty$ in finite time, otherwise it is nonexplosive.
	\end{Lem}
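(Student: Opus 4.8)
The plan is to recognize $Z^i$ as an ordinary birth--death process on $\N$, reflecting at $0$ (the rate $\L^i_{0,-1}$ being taken to be $0$, since there is no state $-1$), and to reduce the statement to the classical explosion criterion already used in Section~\ref{sec-marche-Z}. Recall from Anderson~\cite{anderson}, Section~8.1, that such a process has an almost surely finite explosion time if and only if
\begin{equation*}
\sum_{n=1}^{\infty}\Big(\prod_{j=1}^{n}\frac{\L^i_{j,j-1}}{\L^i_{j,j+1}}\Big)\sum_{k=1}^{n}\prod_{l=1}^{k}\frac{\L^i_{l-1,l}}{\L^i_{l,l-1}}<\infty ,
\end{equation*}
and is nonexplosive otherwise (in which case it visits each state finitely often but never reaches $+\infty$ in finite time). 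So the whole content of the lemma is to show that this series converges exactly when (\ref{borne-infini}) does.

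For the comparison I would introduce $m_p:=\mu(G^i_p)$. Since the sets $G^i_p$ of (\ref{set-G-p}) increase with $p$ and $\mu$ is a probability measure, the sequence $(m_p)_p$ is nondecreasing with $m_0>0$ and $m_p\le 1$; hence it is bounded above and below by positive constants. The birth and death rates of $Z^i$ read $\L^i_{p,p+1}=\tfrac{m_{p+1}}{m_p}\L_{\varphi_i(p+1),\varphi_i(p)}$ and $\L^i_{p,p-1}=\tfrac{m_{p-1}}{m_p}\L_{\varphi_i(p),\varphi_i(p+1)}$, so the ratios entering the criterion combine an original transition rate with a quotient of consecutive $m_p$'s. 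Feeding in the reversibility identity $\mu^i(p)\L_{\varphi_i(p),\varphi_i(p+1)}=\mu^i(p+1)\L_{\varphi_i(p+1),\varphi_i(p)}$ (the branch form of (\ref{eq-reversible})) and telescoping, a direct computation gives
\begin{equation*}
\prod_{l=1}^{k}\frac{\L^i_{l-1,l}}{\L^i_{l,l-1}}=\frac{m_k^2}{m_0^2}\cdot\frac{\L_{\varphi_i(0),\varphi_i(1)}}{\mu^i(k)\L_{\varphi_i(k),\varphi_i(k+1)}},\qquad
\prod_{j=1}^{n}\frac{\L^i_{j,j-1}}{\L^i_{j,j+1}}=\frac{m_0 m_1}{m_n m_{n+1}}\cdot\frac{\mu^i(n+1)}{\mu^i(1)} .
\end{equation*}

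Because every $m_p$ lies in the fixed interval $[m_0,1]$, the prefactors $m_k^2/m_0^2$ and $m_0m_1/(m_nm_{n+1})$ are bounded above and below by constants independent of $k$ and $n$. Consequently the inner product is comparable, up to a fixed multiplicative constant, to $\bigl(\mu^i(k)\L_{\varphi_i(k),\varphi_i(k+1)}\bigr)^{-1}$, and the outer product to $\mu^i(n+1)$, so the explosion series above is bounded between two constant multiples of
\begin{equation*}
\sum_{n=1}^{\infty}\mu^i(n+1)\sum_{k=1}^{n}\frac{1}{\mu^i(k)\L_{\varphi_i(k),\varphi_i(k+1)}},
\end{equation*}
which is precisely the left-hand side of (\ref{borne-infini}). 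The two series therefore converge or diverge together, giving the claimed equivalence; and by the dichotomy recalled above, convergence yields an almost surely finite explosion time with $Z^i\to+\infty$, while divergence yields nonexplosiveness.

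The computations are routine once reversibility is invoked, so the only genuine point to pin down is the uniform control $0<m_0\le m_p\le 1$: this is what lets the weights $\mu(G^i_p)$ be absorbed into harmless constants, and it is exactly where positive recurrence (finiteness of $\mu$) enters. A minor preliminary is to check that $0$ is indeed reflecting for $Z^i$, so that Anderson's criterion applies verbatim; this is immediate from the generator, which defines no transition below $0$.
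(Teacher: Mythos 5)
Your proof is correct and follows essentially the same route as the paper's: both invoke Anderson's Section 8.1 explosion criterion for the reflecting birth--death process $Z^i$, substitute the rates $\L^i$, simplify via the reversibility relation $\mu^i(l)\L_{\varphi_i(l),\varphi_i(l+1)}=\mu^i(l+1)\L_{\varphi_i(l+1),\varphi_i(l)}$, and absorb the weights $\mu(G^i_p)\in[\mu(G^i_0),1]$ into harmless constants. If anything, your version is slightly more complete: your exact telescoped product identities yield the two-sided comparison explicitly, whereas the paper's displayed chain of inequalities records only the lower-bound direction, leaving the converse implicit in the same boundedness of the $\mu(G^i_p)$.
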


	\begin{proof}
	We recall that the explosion criterion for $Z^{i}$ is:
		$$\sum_{i=1}^{\infty}\prod_{j=1}^{i}\dfrac{\L^{i}_{j,j-1}}{\L^{i}_{j,j+1}}\sum_{k=1}^{i}\prod_{l=1}^{k}\dfrac{\L^{i}_{j-1,j}}{\L^{i}_{j,j-1}}<\infty$$
	(cf \cite{anderson}, Section 8.1). That is:
		
		\begin{align*}
		\infty&>\sum_{n=1}^{\infty}\prod_{j=1}^{n}\dfrac{\left(\dfrac{\mu(G^i_{j-1})}{\mu(G^i_{j})}\right)\L_{\varphi_i(j),\varphi_i(j+1)}}{\left(\dfrac{\mu(G^i_{j+1})}{\mu(G^i_j)}\right)\L_{\varphi_i(j+1),\varphi_i(j)}}\sum_{k=1}^{n}\prod_{l=1}^{k}\dfrac{\left(\dfrac{\mu(G^i_j)}{\mu(G^i_{j-1})}\right)\L_{\varphi_i(j),\varphi_i(j-1)}}{\left(\dfrac{\mu(G^i_{j-1})}{\mu(G^i_{j})}\right)\L_{\varphi_i(j),\varphi_i(j+1)}}\\
		&\geq\sum_{n=1}^{\infty}\prod_{j=1}^{n}\dfrac{\mu(G^i_{j-1})\L_{\varphi_i(j),\varphi_i(j+1)}}{\mu(G^i_{j+1})\L_{\varphi_i(j+1),\varphi_i(j)}}\sum_{k=1}^{n}\prod_{l=1}^{k}\dfrac{\L_{\varphi_i(j),\varphi_i(j-1)}}{\L_{\varphi_i(j),\varphi_i(j+1)}}\\
		&\geq\sum_{n=1}^{\infty}\prod_{j=1}^{n}\dfrac{\mu(G^i_{0})\L_{\varphi_i(j),\varphi_i(j+1)}}{\L_{\varphi_i(j+1),\varphi_i(j)}}\sum_{k=1}^{n}\prod_{l=1}^{k}\dfrac{\L_{\varphi_i(j),\varphi_i(j-1)}}{\L_{\varphi_i(j),\varphi_i(j+1)}}\\
		&\geq\dfrac{\mu(G^i_{0})}{\mu(\varphi_i(0))}\sum_{n=1}^{\infty}\mu(\varphi_i(n+1))\sum_{k=1}^{n}\dfrac{1}{\mu(\varphi_i(k))\L_{\varphi_i(k),\varphi_i(k+1)}}\\
		&=\dfrac{\mu(G^i_{0})}{\mu(\varphi_i(0))}\sum_{n=1}^{\infty}\mu^i(n+1)\sum_{k=1}^{n}\dfrac{1}{\mu^i(k)\L_{\varphi_i(k),\varphi_i(k+1)}}\\
		\end{align*}
	
	\noindent where the $G^i_j$ are the subsets of $\bar{G}$ defined in (\ref{set-G-p}).
	\end{proof}
%
%
	
The purpose of these generators $\L^i$ is then to provide a sufficient condition for the explosion of a process $X^*$ with generator $\L^*$ along each branch $Q_i$. We get a necessary condition similarly, defining, for every $i\in I$ and every $n\in\N$ and $p\geq n$:

	$$\begin{cases}
	\L^{i,n}_{p,p+1}:=\left(1+\dfrac{\mu(\varphi_i(p+1))}{\mu(\varphi_i([n,p]))}\right)\L_{\varphi_i(p+1),\varphi_i(p)}=\L^*_{\varphi_i([n,p]),\varphi_i([n,p+1])}\smallskip\\
	\L^{i,n}_{p,p-1}:=\left(1-\dfrac{\mu(\varphi_i(p))}{\mu(\varphi_i([n,p]))}\right)\L_{\varphi_i(p-1),\varphi_i(p)}=\L^*_{\varphi_i([n,p]),\varphi_i([n,p-1])}\\
	\L^{i,n}_{p,p}=-\L^{i,n}_{p,p+1}-\L^{i,n}_{p,p-1}
	\end{cases}$$

The generators $(\L^{i,n}_{p,q})_{p,q\geq n}$ define birth-death processes on $[n,+\infty]$. Exactly as above, we get the following lemmas:

	\begin{Lem}\label{couple-necessaire}
	For every $i\in I,$ and every probability measure $\mu_0^*$ on $G^*$, there exists a process $X^*$ with generator $\L^*$ and initial distribution $\mu_0^*$ and a sequence $(Z^{i,n})_{n\in\N}$ of birth death processes such that, for every $n\in\N$, $Z^{i,n}$ has generator $\L^{i,n}$ and:
		$$\P(X^*_t\cap\varphi_i(\dbc{Z^{i,n}_t,\infty})\subset\{\varphi_i(Z^{i,n}_t)\}\ |\ \Delta_i\notin X^*_0,\;t<T_{n})=1$$
	for all $t>0$, with $T_{n}:=\inf\{s>0:\;X^*_s\cap G_{n}^i=\varnothing\}$.
	\end{Lem}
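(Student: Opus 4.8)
I want to compare the evolution of the right-bound of $X^*$ along the branch $Q_i$ (from the inside) with a birth-death process $Z^{i,n}$, this time in the opposite direction to Lemma \ref{couple-suffisante}: here $Z^{i,n}$ must dominate $X^*$ in the sense that $X^*$ never reaches beyond $\varphi_i(Z^{i,n}_t)$, so that the explosion of $X^*$ along $Q_i$ forces the explosion of $Z^{i,n}$. The rates $\L^{i,n}$ have been rigged so that $Z^{i,n}$ lives on $[n,+\infty]$ and is reflected (or absorbed, in effect killed via $T_n$) when it would drop below $n$; the factor $\left(1+\frac{\mu(\varphi_i(p+1))}{\mu(\varphi_i([n,p]))}\right)$ makes $Z^{i,n}$ climb \emph{faster} than the true right-bound, while the factor $\left(1-\frac{\mu(\varphi_i(p))}{\mu(\varphi_i([n,p]))}\right)$ makes it descend \emph{slower}, so the domination $X^*_t\cap\varphi_i(\dbc{Z^{i,n}_t,\infty})\subset\{\varphi_i(Z^{i,n}_t)\}$ is the natural one to maintain.

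\textbf{Plan of the coupling.} The plan is to mimic exactly the construction in the proof of Lemma \ref{couple-suffisante}, constructing $(X^*,Z^{i,n})$ step by step through the jump times $T_m$ of the bivariate chain, but now only on the time interval $[0,T_n)$ where $T_n=\inf\{s:X^*_s\cap G^i_n=\varnothing\}$ is the first time $X^*$ loses all of $\varphi_i(\dbc{n,\infty})$ inside the branch. First I would draw $X^*_0$ from $\mu_0^*$; if $\Delta_i\notin X^*_0$ and $X^*_0$ still meets $\varphi_i(\dbc{n,\infty})$, set $Z^{i,n}_0=\sup\{p:\varphi_i(p)\in X^*_0\}$. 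At each step, letting $Q=X^*_{T_m}$ and $p=Z^{i,n}_{T_m}$, I would again separate the transitions of $X^*$ into the three families $A_i(Q)$, $B(Q)$, $C_i(Q)$ from Lemma \ref{couple-suffisante}, but now I no longer need to block the $B$-transitions (there is no auxiliary $Y$ here, since a $B$-transition can only shrink $X^*$ and so only helps the domination). The key inequalities to check are the reverse of those in the previous lemma: when $\varphi_i(p)$ is the far-end point of $Q$, one has $\L^*_{Q,A^+_i(Q)}\le\L^{i,n}_{p,p+1}$ and $\L^*_{Q,A^-_i(Q)}\ge\L^{i,n}_{p,p-1}$, which is precisely what the $(1+\cdot)$ and $(1-\cdot)$ corrections in the definition of $\L^{i,n}$ buy. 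I would then couple so that every $A^+_i$ step of $X^*$ is matched by an up-step of $Z^{i,n}$, and every down-step of $Z^{i,n}$ is matched by an $A^-_i$ step of $X^*$, allocating the leftover rate mass (the slack in the inequalities) to moves of $Z^{i,n}$ alone; one verifies by applying the coupling generator to functions of a single coordinate that the marginals are indeed $\L^*$ and $\L^{i,n}$.

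\textbf{Maintaining the invariant and handling explosion.} The invariant $X^*_t\cap\varphi_i(\dbc{Z^{i,n}_t,\infty})\subset\{\varphi_i(Z^{i,n}_t)\}$ says that, on $Q_i$, $X^*$ reaches no further out than $Z^{i,n}$. I would prove it holds through each $T_m$: it can only be threatened when $X^*$ performs an $A^+_i$ move (pushing its far-end up), but such a move is coupled to an up-step of $Z^{i,n}$, preserving the inequality; a down-step of $Z^{i,n}$ is coupled to an $A^-_i$ move or leaves $X^*$ weakly below it; $B$- and $C$-transitions either shrink $X^*$ on $Q_i$ or touch only the other branches and $C_G$. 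As in Lemma \ref{couple-suffisante}, the construction is carried on identically after each explosion time of the bivariate process, and since $\L^*$ is stratified with almost surely infinite double-explosion time, the invariant propagates for all $t<T_n$.

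\textbf{Main obstacle.} The delicate point, just as in the previous lemma, is the bookkeeping at the far-end when $\varphi_i(Z^{i,n}_t)$ is exactly the extremal point of $X^*$ on the branch (the analogue of the ``first case'' $\varphi_i(Z^i_{T_n})\notin A^-_i(X^*_{T_n})$): there one must be sure that the slack rates $\L^{i,n}_{p,p+1}-\L^*_{Q,A^+_i(Q)}\ge0$ and $\L^{i,n}_{p,p-1}-\L^*_{Q,A^-_i(Q)}\le 0$ have the correct signs so that the conditional transition probabilities are genuinely nonnegative and sum to one, and that allowing $Z^{i,n}$ to move without $X^*$ does not accidentally violate the inclusion. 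This amounts to checking the two rate inequalities above together with $\mu(\varphi_i([n,p]))\le\mu(G^i_p)$-type monotonicity, which is a short but careful computation using the reversibility relation (\ref{eq-reversible}); I expect this sign-chasing to be the only real content, the rest being a transcription of the earlier coupling.
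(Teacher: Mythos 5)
Your proposal takes the same route as the paper: the paper's own proof is a two-line remark that one adapts the coupling of Lemma \ref{couplage} (letting $X^*$ and each $Z^{i,n}$ evolve independently until $\varphi_i(Z^{i,n})$ meets $X^*$, then forcing $X^*$ to stay in $G^i_{Z^{i,n}}$), and your step-by-step construction \`a la Lemma \ref{couple-suffisante}, the observation that $B$-transitions need no blocking (hence no auxiliary clock $Y$ in this direction), and the verification of the marginals via the coupling generator are exactly that adaptation, spelled out in more detail than the paper bothers to.

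There is, however, one concrete soft spot, located precisely at the step you defer as ``sign-chasing''. The two rate inequalities $\L^*_{Q,A^+_i(Q)}\le\L^{i,n}_{p,p+1}$ and $\L^*_{Q,A^-_i(Q)}\ge\L^{i,n}_{p,p-1}$ are \emph{not} bought by the $(1\pm\cdot)$ corrections alone, and the monotonicity you invoke, $\mu(\varphi_i(\dbc{n,p}))\le\mu(G^i_p)$, is not the relevant comparison. Writing $Q=X^*_t$ with far end $\varphi_i(p)$, one has $\L^*_{Q,A^+_i(Q)}=\bigl(1+\mu(\varphi_i(p+1))/\mu(Q)\bigr)\L_{\varphi_i(p+1),\varphi_i(p)}$, and comparing with $\L^{i,n}_{p,p\pm1}=\L^*_{\varphi_i(\dbc{n,p}),\varphi_i(\dbc{n,p\pm1})}$ shows that \emph{both} inequalities are equivalent to $\mu(Q)\ge\mu(\varphi_i(\dbc{n,p}))$. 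This is exactly where the restriction $t<T_n$ enters: for $t<T_n$ the connected set $Q$ meets $G^i_n$ and contains $\varphi_i(p)$, hence contains the whole segment $\varphi_i(\dbc{n,p})$, which gives the bound. Relatedly, your verbal gloss of $T_n$ is inverted: $X^*_s\cap G^i_n=\varnothing$ means $X^*_s\subset\varphi_i(\dbc{n+1,\infty})$, so $T_n$ is the first time $X^*$ has lost everything \emph{outside} the far part of the branch, not the first time it loses $\varphi_i(\dbc{n,\infty})$. Under your reading, $X^*_t$ could be an interval $\varphi_i(\dbc{m,p})$ with $m>n$ before your ``$T_n$''; there $\mu(X^*_t)<\mu(\varphi_i(\dbc{n,p}))$, both inequalities fail, and the coupling as described breaks --- which is the whole reason the lemma is localized to $[0,T_n)$. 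Two minor further points: the down-direction slack $\L^*_{Q,A^-_i(Q)}-\L^{i,n}_{p,p-1}\ge0$ must fire an $A^-_i$ move of $X^*$ alone (a solo down-move of $Z^{i,n}$ at equality would break the inclusion), rather than ``moves of $Z^{i,n}$ alone'' as you wrote; and when the far end of $X^*_0$ lies below $n$ you should, as the paper does, let the two processes evolve independently until they meet, the invariant being vacuous in the meantime. With the containment argument substituted for your monotonicity claim, the rest of your plan goes through as in the paper.
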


	\begin{proof}
	We only need to adapt slightly the proof of Lemma \ref{couplage}. This time, we let $X^*$ and each of the $Z^{i,n}$ evolve independently up to the time $\tau:=\inf\{t>0:\;\exists n\in\N, Z^{i,n}\in X^*_t\}$, then we apply the same method to force $X^*$ to stay in $G^i_{Z^{i,n}}$.
	\end{proof}

	\begin{Lem}\label{condition-necessaire}
	If $Z^{i,n}$ is a process with generator $\L^{i,n}$, with $i\in I,\; n\in\N$ then the explosion time of $Z^{i,n}$ is finite almost surely if and only if:
		\begin{equation}\label{borne-infini-bis}
		\sum_{j=n+1}^{\infty}\mu^i(j+1)\sum_{k=n+1}^{j}\dfrac{1}{\mu^i(k)\L_{\varphi_i(k),\varphi_i(k+1)}}<\infty
		\end{equation}
	\end{Lem}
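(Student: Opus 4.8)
The plan is to mirror the argument of Lemma~\ref{condition-suffisante}: apply the classical explosion criterion for birth--death processes (Anderson \cite{anderson}, Section~8.1) to $Z^{i,n}$, then simplify the resulting double series by reversibility and the definition of $\mu^i$, after having controlled the extra weight factors that distinguish the rates of $\L^{i,n}$ from the bare branch rates.

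First I would write down the criterion. Since $Z^{i,n}$ lives on $\{n,n+1,\dots\}$ and is reflected at $n$ (its death rate there vanishes, because $\mu(\varphi_i([n,n]))=\mu(\varphi_i(n))$ forces $\L^{i,n}_{n,n-1}=0$), explosion occurs in finite time almost surely if and only if
	$$\sum_{m=n+1}^{\infty}\prod_{j=n+1}^{m}\frac{\L^{i,n}_{j,j-1}}{\L^{i,n}_{j,j+1}}\sum_{k=n+1}^{m}\prod_{l=n+1}^{k}\frac{\L^{i,n}_{l-1,l}}{\L^{i,n}_{l,l-1}}<\infty.$$
Introducing the correction factors $c^+_p:=1+\frac{\mu(\varphi_i(p+1))}{\mu(\varphi_i([n,p]))}$ and $c^-_p:=1-\frac{\mu(\varphi_i(p))}{\mu(\varphi_i([n,p]))}$, we have $\L^{i,n}_{p,p+1}=c^+_p\,\L_{\varphi_i(p+1),\varphi_i(p)}$ and $\L^{i,n}_{p,p-1}=c^-_p\,\L_{\varphi_i(p-1),\varphi_i(p)}$, so the ratios appearing above are exactly those of the suffisante computation, multiplied by ratios of the $c^\pm_p$.

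The key step, which I expect to be the main obstacle, is to show that these correction factors do not affect the convergence of the series, even though they occur inside products of length $m-n$. Positive recurrence gives $\sum_{q\geq n}\mu(\varphi_i(q))<\infty$, so $\mu(\varphi_i(p))\to0$ while $\mu(\varphi_i([n,p]))$ increases to a finite limit and stays bounded below by $\mu(\varphi_i(n))>0$. Hence each $c^\pm_p$ lies between two positive constants for $p>n$, and, more importantly,
	$$\sum_{p>n}|c^\pm_p-1|\leq\frac{1}{\mu(\varphi_i(n))}\sum_{p\geq n}\mu(\varphi_i(p))<\infty.$$
Since $c^+_p\geq1$ and $c^-_p\in(0,1)$, this summability forces the infinite products $\prod_{j>n}c^+_j$ and $\prod_{j>n}c^-_j$ to converge to finite positive limits; consequently the partial products $\prod_{j=n+1}^{m}c^\pm_j$ are bounded above and below by positive constants uniformly in $m$. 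This is precisely the point where positive recurrence is used: a product of factors merely bounded away from $1$ could blow up, and it is the summability of $c^\pm_p-1$ that rules this out.

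With this uniform two-sided control, the double series above is bounded above and below by constant multiples of the same series with every $c^\pm_p$ replaced by $1$. That reduced series is, by exactly the manipulations of Lemma~\ref{condition-suffisante} (using the reversibility relation $\mu(\varphi_i(j))\L_{\varphi_i(j),\varphi_i(j+1)}=\mu(\varphi_i(j+1))\L_{\varphi_i(j+1),\varphi_i(j)}$ and $\mu^i(j)=\mu(\varphi_i(j))/\mu(\varphi_i(0))$), comparable to
	$$\sum_{j=n+1}^{\infty}\mu^i(j+1)\sum_{k=n+1}^{j}\frac{1}{\mu^i(k)\L_{\varphi_i(k),\varphi_i(k+1)}},$$
which is exactly the left-hand side of (\ref{borne-infini-bis}). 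As all comparisons are two-sided with strictly positive constants, the explosion criterion for $Z^{i,n}$ holds if and only if (\ref{borne-infini-bis}) does, completing the proof.
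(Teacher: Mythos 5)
Your proof is correct and takes essentially the same route as the paper, which (by its remark ``Exactly as above'') establishes this lemma just like Lemma \ref{condition-suffisante}: write Anderson's explosion criterion for the reflected birth--death chain on $\dbc{n,\infty}$, control the weight factors, and reduce to (\ref{borne-infini-bis}) via reversibility and the definition of $\mu^i$. Your summability argument for the correction factors $c^{\pm}_p$ is a harmless variant of the mechanism implicit in the paper, where the products telescope exactly, $\prod_{j=n+1}^{m}c^{+}_j=\mu(\varphi_i(\dbc{n,m+1}))/\mu(\varphi_i(\dbc{n,n+1}))$ and similarly for $c^{-}$, and are bounded between positive constants because $\mu(\varphi_i(n))\le\mu(\varphi_i(\dbc{n,p}))\le 1$.
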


	\begin{thm}
	Let $X^*$ be a process with generator $\L^*$. If there exists $i\in I$ such that the condition (\ref{borne-infini}) is not fulfilled, then for all $t>0$:
		$$\P(\Delta_i\in X^*_t\ |\ \Delta_i\notin X^*_0)=0$$
	Conversely, if for every $i\in I$ the condition (\ref{borne-infini}) is fulfilled then, almost surely, there exists $T<\infty$ such that $X^*_t=\bar{G},\;\forall t>T$.
	\end{thm}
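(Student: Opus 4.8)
The plan is to translate both implications into statements about the \emph{far end} $R^i_t:=\sup\{p\in\N:\varphi_i(p)\in X^*_t\}$ of $X^*$ along each branch, and to exploit the two--sided comparison with birth--death processes already prepared in Lemmas \ref{couple-suffisante}--\ref{condition-necessaire}. Two preliminary remarks will be used throughout. First, since $\sum_n\mu^i(n)<\infty$, the tail condition \eqref{borne-infini-bis} holds for some (equivalently every) $n$ if and only if \eqref{borne-infini} holds: the difference between the two sums is the finitely many terms $j\le n$ together with $\bigl(\sum_{k\le n}\tfrac1{\mu^i(k)\L_{\varphi_i(k),\varphi_i(k+1)}}\bigr)\sum_{j>n}\mu^i(j+1)$, and the last factor is bounded by $\sum_j\mu^i(j)<\infty$. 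Second, the intertwining $\l(X_t\mid\F^{X^*}_t)=\Lambda(X^*_t,\cdot)$ forces $X_t\in X^*_t$ almost surely, and a connected compact $Q\in G^*$ with $\Delta_i\in Q$ and $Q\ne\{\Delta_i\}$ must contain a whole tail $\varphi_i(\dbc{a,\infty})$; hence $\{\Delta_i\in X^*_t\}$ coincides, up to the null event $\{X^*_t=\{\Delta_i\}\}$ (null by \eqref{absorbant}), with $\{R^i_t=\infty\}$.

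For the necessary direction, assume \eqref{borne-infini} fails for some $i$; then \eqref{borne-infini-bis} fails for every $n$, so by Lemma \ref{condition-necessaire} each $Z^{i,n}$ is nonexplosive, i.e. $Z^{i,n}_t<\infty$ a.s. Lemma \ref{couple-necessaire} gives, on $\{\Delta_i\notin X^*_0,\ t<T_n\}$, the domination $R^i_t\le Z^{i,n}_t<\infty$, so $\{R^i_t=\infty\}\subseteq\{T_n\le t\}$ up to a null set and therefore $\P(\Delta_i\in X^*_t\mid\Delta_i\notin X^*_0)\le\P(T_n\le t)$ for every $n$. It remains to let $n\to\infty$: on $\{T_n\le t\}$ one has $X^*_{T_n}\subseteq\varphi_i(\dbc{n+1,\infty})$, and evaluating the intertwining at the $\F^{X^*}$--stopping time $T_n$ (Corollary \ref{cor-tps-absorption}) yields $X_{T_n}\in X^*_{T_n}$, so $X$ reaches level $>n$ on $Q_i$ before time $t$. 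As $X$ is nonexplosive its trajectory is a.s. bounded on $[0,t]$, whence $\P(\exists s\le t:\ X_s\ \text{at level}>n\ \text{on}\ Q_i)\to0$ and $\P(\Delta_i\in X^*_t\mid\Delta_i\notin X^*_0)=0$.

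For the converse, suppose \eqref{borne-infini} holds for all $i$; by Lemma \ref{condition-suffisante} each $Z^i$ reaches $+\infty$ at an a.s. finite time $\tau^i$. I would prove a.s. absorption at $\bar G$ by a recurrence argument. Using the coupling of Lemma \ref{couple-suffisante}, with universal rate $\lambda$ and the exponential clock $Y$ suppressing all disconnecting ($B$--)transitions, before time $Y$ the central part of $X^*$ only grows and $R^i_t\ge Z^i_t$; hence on $\{Y>\tau\}$ with $\tau:=\max_i\tau^i$, starting from any $Q\supseteq C_G$ one reaches $X^*_\tau=\bar G$. Monotonicity of a birth--death process in its starting height makes $\tau$ stochastically dominated by its value from the branch bases, so $\P_Q(\text{reach }\bar G)\ge\P(Y>\tau)\ge c>0$ uniformly over $\{Q\supseteq C_G\}$; prefacing this with the event of first filling the finite center, whose addition rates are bounded below along center edges, extends the bound to $f(Q):=\P_Q(\text{never reach }\bar G)\le 1-c'$ for every $Q$ with $Q\cap C_G\ne\varnothing$. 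Finally $f(X^*_t)$ is a bounded martingale with $f(X^*_t)\to\ind_{\{\text{never reach }\bar G\}}$ by L\'evy's theorem; since $X$ is positive recurrent and $X_t\in X^*_t$, the set $\{Q\cap C_G\ne\varnothing\}$ is visited at arbitrarily large times, forcing $\liminf_t f(X^*_t)\le 1-c'$ and hence $\P(\text{never reach }\bar G)=0$, which yields the stated $T<\infty$ since $\bar G$ is absorbing.

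The necessary direction is essentially bookkeeping once the two preliminary remarks are in place. The delicate part is the converse: the coupling only produces a \emph{positive} absorption probability per attempt, because $Y$ is a.s. finite and the disconnecting transitions can always push $X^*$ back toward the center, so the real work is upgrading this to an almost--sure statement. The two points I expect to require care are (i) a lower bound on the absorption probability that is \emph{uniform} over all states meeting $C_G$, for which I rely on the monotonicity of $Z^i$ in its initial height together with the uniform positivity of the center--filling rates, and (ii) the recurrence of $X^*$ to $\{Q\cap C_G\ne\varnothing\}$, deduced from the positive recurrence of $X$ and the inclusion $X_t\in X^*_t$ (via a Fubini argument to pass from ``for each fixed $t$'' to ``for arbitrarily large $t$''). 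Combining these through the martingale/$0$--$1$ argument is what closes the proof.
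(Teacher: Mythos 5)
Your necessary direction is, in substance, the paper's own argument: both routes note that failure of (\ref{borne-infini}) forces failure of (\ref{borne-infini-bis}) for every $n$ (your $\sum_n\mu^i(n)<\infty$ justification is exactly right), then invoke Lemmas \ref{couple-necessaire} and \ref{condition-necessaire} to get $\{\Delta_i\in X^*_t\}\subseteq\{T_n\le t\}$ up to null sets. You then diverge only at the last step: the paper identifies $\bigcap_n\{T_n\le t\}$ with the event that $X^*$ hits $\{\Delta_i\}$ by time $t$ and concludes by (\ref{absorbant}), whereas you bound each $\P(T_n\le t)$ by evaluating the intertwining at the stopping time $T_n$ (Corollary \ref{cor-tps-absorption}) and using that the nonexplosive $X$ is a.s.\ bounded on $[0,t]$. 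Since (\ref{absorbant}) is itself proved from precisely these two ingredients ($X_t\in X^*_t$ plus nonexplosiveness), this difference is cosmetic, and your variant is valid.

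The converse is where you genuinely deviate, and where the two soft spots of your sketch lie. The paper reaches the $\Delta_i$ \emph{sequentially}: a quantitative path estimate (the $\alpha_1,\alpha_2$ computation) gives a lower bound $\varepsilon_t$, uniform over states meeting $C_G$, for reaching $\varphi_i(0)$ with no $B$ transition; the single-branch coupling of Lemma \ref{couple-suffisante} then gives a uniformly positive probability of riding $Z^i$ to $\Delta_i$ before the clock $Y$; chaining the $N$ branches via the Markov property yields positive recurrence of $G^*_N$ and then of $\bar G$. You instead posit a \emph{simultaneous} coupling of $X^*$ with all the $Z^i$ and a single clock $Y$, and close with a martingale/L\'evy $0$--$1$ argument on $f(Q)=\P_Q(\text{never reach }\bar G)$, using recurrence of $X^*$ to $\{Q:Q\cap C_G\ne\varnothing\}$ deduced from positive recurrence of $X$ and $X_t\in X^*_t$; that closing is a clean and correct alternative to the paper's recurrence chaining. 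Two points need patching, however. First, Lemma \ref{couple-suffisante} is stated for one branch $i$ at a time, so your $\tau=\max_i\tau^i$ presupposes a multi-branch coupling the lemma does not provide; it is a routine extension (the forced $A_i$ transitions for distinct $i$ are disjoint, and one rate-$\lambda$ clock dominates all $B$ transitions), but it must either be constructed or replaced by the paper's sequential attempts. Second, ``first filling the finite center'' is not a sufficient preface: the coupling's guarantee is conditioned on $X^*_0\nsubseteq G^i_0$, i.e.\ $X^*$ must already stand on branch $i$ (and $Z^i_0$ is only defined when $X^*_0\cap Q_i\ne\varnothing$), so the preface must also add the first point(s) $\varphi_i(0),\varphi_i(1)$ of each branch. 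This is precisely the role of the paper's first step; the bounded-rate argument you invoke for the center covers it, since only finitely many transitions at rates uniformly bounded above and below are involved, so both gaps are repairable without new ideas.
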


	\begin{proof}
	We prove the first part with Lemmas \ref{couple-necessaire} and \ref{condition-necessaire}. Let $i\in I$ such that the condition (\ref{borne-infini}) is not fulfilled. We see immediately that for every $n\in\N$, the condition (\ref{borne-infini-bis}) is not either. Let then $(\tilde{X}^*,(Z^{i,n})_{n\in\N})$ be a family of processes as in Lemma \ref{couple-necessaire}, $\tilde{X}^*$ having same law as $X^*$. We then get, for all $t>0$:
		
		\begin{align*}
		\P(\Delta_i\in X^*_t\ |\ \Delta_i\notin X^*_0)&=\P(\Delta_i\in\tilde{X}^*_t\ |\ \Delta_i\notin \tilde{X}^*_0)\\
		&\leq\P(\underset{n\in\N}{\bigcap}\{\tilde{X}^*_t\cap\varphi_i(\dbc{Z^{i,n}_t+1,\infty})\neq\varnothing\}\ |\ \Delta_i\notin \tilde{X}^*_0)\\
		&\leq\P(\underset{n\in\N}{\bigcap}\{t\geq T_n\}\ |\ \Delta_i\notin\tilde{X^*_0})\\
		&=\P(\exists s\leq t:\;\tilde{X}^*_s=\{\Delta_i\}\ |\ \Delta_i\notin\tilde{X}^*_0)\\
		&=\P(\tilde{X}^*_t=\{\Delta_i\}\ |\ \Delta_i\notin\tilde{X}^*_0)
		\end{align*}
		
	and the latter is equal to $0$ from (\ref{absorbant}).
	
	\paragraph{}We show the second part with Lemmas \ref{couple-suffisante} and \ref{condition-suffisante}. Observe that since $X^*$ has the law of a process $\Lambda$-linked to $X$, which is positive recurrent, $X^*$ must reach $C_G$ in finite time almost surely. Thus we may assume without loss of generality that the process starts from this moment, so that $X^*_0$ contains at least one point of $C_G$.
	
	When the process $X^*$ reaches a state containing some $\Delta_i$ (necessarily after an explosion), the latter can exit $X^*$ only in the same time as all the rest of the branch $Q_i$ and the point $p_i$ of $C_G$ adjacent to $Q_i$. This event corresponds to a $B$ transition. The idea is therefore to show that the process can reach successively each of the $\Delta_i$ without losing any point of $C_G$ (so without losing the $\Delta_i$ already reached). To do so, we find a lower bound, uniform on the initial condition, of the probabilities to reach each $\Delta_i$ before a $B$ transition. We do this in two steps: first, we find a lower bound of the probability to reach $\varphi_i(0)$ by following a given path, then we couple the process with $Y$ and $Z^i$ as in Lemma \ref{couple-suffisante}.
	
	For every $i\in I$, we call $f_i:G^*\rightarrow G^*$ the function defined by:
		$$f_i(Q)=Q\cap (C_G\cup\{\varphi_i(0)\})$$
	A way to ensure the absence of $B$ transition on a time interval $[0,t]$ is to ask for $f_i(X^*)$ to be increasing (for set inclusion) on this interval.
	
	Let $t>0$, $i\in I,\; C\subset C_G\cup\{\varphi_i(0)\}$ such that $C\nsubseteq\{\varphi_i(0)\}$ (so $C$ contains at least one point of $C_G$), $Q\in G^*$ such that $f_i(Q)=C$ and $(q_0,\dots,q_{n_0})$ a path with $q_0\in C,\;q_1,\dots q_{n_0-1}\in C_G\setminus C$ and $q_{n_0}=\varphi_i(0)$. If $n_0=0$, that means that $\varphi_i(0)\in Q$. We write $C^n=C\cup \{q_1\}\cup\dots\cup \{q_n\},\; 0\leq n\leq n_0$. We also write $T_k$ for the $k$-th jumping time of $X^*,\;k\in\N$, with $T_0=0,\; (Y_k)_{k\in\N}$ the underlying jump chain: $X^*_{T_k}=Y_k$ for every $k\in\N$ and $T_{\phi(k)}$ the subsequence of jumping times of $f_i(X^*)$. The processes $(T_{\phi(k)})$ and $(f_i(Y_{\phi(k)}))$, expressing the evolution of $X^*$ on $C_G\cup\{\varphi_i(0)\}$, are not Markovian, but we can bound their transition probabilities using the process $X^*$. The first aim is to find a lower bound of $\P(f_i(X^*_t)=C^{n_0},\;f_i(X^*) \text{ increasing on }[0,t]\ |\ X^*_0=Q)$ that does not depend on $Q$. We start roughly:
	
		\begin{align}
		\nonumber&\P(f_i(X^*_t)=C^{n_0},\;f_i(X^*) \text{ increasing on }[0,t]\ |\ X^*_0=Q)\\
		\nonumber\geq&\P\left(\underset{k=1}{\overset{n_0}{\bigcap}}\{T_{\phi(k)}-T_{\phi(k-1)}\leq t/n_0\;,f_i(Y_{\phi(k)})=C^k,\;T_{\phi(n_0+1)}-T_{\phi(n_0)}>t\}\ |\ X^*_0=Q\right)\\
		=&\prod_{k=1}^{n_0}\P(\epsilon_k\in[0,t/n_0])\;p_k\times \P(\epsilon_{n_0+1}>t/n_0)\label{minor}
		\end{align}
		
	\noindent where for every $k\leq n_0+1$:
		$$\epsilon_k\sim\l(T_{\phi(k)}-T_{\phi(k-1)}\ |\ \underset{n=1}{\overset{k-1}{\bigcap}}\{f_i(Y_{\phi(n)})=C^n,\;T_{\phi(n)}-T_{\phi(n-1)}\in[0,t/n_0],\;X^*_0=Q\})$$
		$$p_k=\P(f_i(Y_{\phi(k)})=C^k\ |\ \underset{n=0}{\overset{k-1}{\bigcap}}\{f_i(Y_{\phi(n)})=C^{n},\;T_{\phi(n+1)}-T_{\phi(n)}\in[0,t/n_0],\;X^*_0=Q\})$$
	(observe that in the above conditional probability, $f_i(Y_0)=C^0$ is not necessary since it is implied by $X^*_0=Q$, and was left only by simplicity of notation).
	
	If at time $T_{\phi(k)}$ the process $X^*$ goes from some state $Q'$ to $Q''$ then we have $Q''=Q'\cup\{p\},\;p\in f_i(V(Q'))$ or $Q''\in\C(Q'\setminus\{p\}),\;p\in f_i(Q')$, and one easily checks from the coefficients of $\L^*$ that:
		$$\left(\underset{q\in V(f_i(G))}{\min}\mu(q)\right)\left(\underset{q\in V(C_G)\atop q'\in V(q)}{\min}\L_{q,q'}\right)\leq\L^*_{Q',Q''}\leq\dfrac{\underset{q\in V(C_G)}{\max}\;|\L_{q,q}|}{\underset{q'\in C_G}{\min}\mu(q')}$$
	We call $\alpha_1<\infty$ the r.h.s. and $\alpha_2>0$ the l.h.s. in the above inequality. One observes that for a given $Q'$, the number of possible $Q''$ of the form $Q'\cup\{p\}$ is upper bounded by $|C_G|$, and $|\underset{p\in f_i(Q')}{\bigcup}\C(Q'\setminus\{p\})|$ is upper bounded by $(|C_G|+1)^2$. Conditioning by $X^*_{T_{\phi(k)-1}}$, we get:
	
		\begin{align*}
		p_k&\geq\inf\{\P(X^*_{T_{\phi(k)}}=Q'\cup\{p\}\ |\ X^*_{T_{\phi(k)-1}}=Q'),\;f_i(Q')\neq\varnothing,\;p\in V(Q')\cap V(C_G)\}\\
		&=\inf\left\lbrace\dfrac{\L^*_{Q',Q'\cup\{p\}}}{\underset{Q''\in G^*\atop f_i(Q'')\neq f_i(Q')}{\sum}\L^*_{Q',Q''}},\;f_i(Q')\neq\varnothing,\;p\in V(Q')\cap V(C_G)\right\rbrace\\
		&\geq\dfrac{\alpha_2}{(|C_G|+(|C_G|+1)^2)\alpha_1}=:\alpha_0>0
		\end{align*}
	
	\noindent and, given $X^*_{T_{\phi(k-1)}}$ we can bound $T_{\phi(k)}-T_{\phi(k-1)}$ by two exponential variables $E_1$ (lower bound) and $E_2$ (upper bound), with respective parameters:
		$$\sup\left\lbrace\underset{Q''\in G^*\atop f_i(Q'')\neq f_i(Q')}{\sum}\L^*_{Q',Q''}:\;Q'\in G^*, f_i(Q')\neq\varnothing\right\rbrace\leq(|C_G|+(|C_G|+1)^2)\alpha_1$$
		$$\inf\left\lbrace\underset{Q''\in G^*\atop f_i(Q'')\neq f_i(Q')}{\sum}\L^*_{Q',Q''}:\;Q'\in G^*, f_i(Q')\neq\varnothing\right\rbrace\geq\alpha_2$$
	
	 Put in other words:
	 	$$\P(\epsilon_k\in[0,t/n_0])\geq\P(E_2\in[0,t/n_0])\geq 1-\exp(-\alpha_2t/n_0)$$
	 and:
	 	$$\P(\epsilon_{n_0+1}>t)\geq \P(E_1>t)\geq \exp(-(|C_G|+(|C_G|+1)^2)\alpha_1t)$$
	 Going back to (\ref{minor}) and using the fact that $n_0\leq|C_G|$, we see that we have found a lower bound $\varepsilon_t$ on the probability to reach $\varphi_i(0)$ in time $t$ before any $B$ transition, and that $\varepsilon_t$ does not depend on $i$, nor on the initial configuration $C$. Hence we have made the first step of the bounding.
	 
	 Starting from the state reached during this first step, we construct $(X^*,Y,Z^i)$ as in Lemma \ref{couple-suffisante}. The probability to reach a state containing $\Delta_i$ before a $B$ transition, in time $t$, is lower bounded by $\P(Z^i_{t}=\infty,Y>t\ |\ Z^i_0=\varphi_i(0))$ from the law of the couple $(X^*,Z^i)$, and this probability is positive since $Z^i$ and $Y$ are independent. Doing the two steps consecutively (each one in time $t/2$ for instance), we get:
		$$\P(\Delta_i\in X^*_t,f_i(X^*)\text{ increasing on }[0,t]\ |\ X^*_0,\; X^*_0\cap C_G\neq\varnothing)\geq\varepsilon^i_t$$
	and this constant only depends on $i$ and $t$. By Markovianity and homogeneity, it follows that:
		\begin{align*}
		&\P(\Delta_i\in X^*_{Nt},\;\forall i\in I\ |\ X^*_0\cap C_G\neq \varnothing)\\
		\geq&\prod_{i=1}^{N}\underset{Q\in G^*\atop Q\cap C_G\neq\varnothing}{\inf}\P(\Delta_i\in X^*_{it},f_i(X^*)\text{ increasing on }[(i-1)t,it]\ |\ X^*_{(i-1)t}=Q)\\
		\geq&\prod_{i=1}^{N}\underset{Q\in G^*\atop Q\cap C_G\neq\varnothing}{\inf}\P(\Delta_i\in X^*_{t},f_i(X^*)\text{ increasing on }[0,t]\ |\ X^*_0=Q)\\
		\geq&\prod_{i\in I}\varepsilon^i_t>0
		\end{align*}
	The set $G^*_N$ of states containing all $\Delta_i$ (defined in (\ref{set-G_i})) is then positive recurrent. It remains to conclude by observing that from some state $Q\in G^*_N$, we can go to $\bar{G}$ in finitely many jumps:
	if $q_1,\dots,q_{n_0}$ are such that $\{q_i,i\leq n_0\}=C_G\setminus Q$ and $q_i\in V(Q\cup\{q_1\}\cup\ldots\cup\{q_{i-1}\})$, then
		$$\P(X^*_{T_i}=Q\cup\{q_1\}\cup\ldots\cup\{q_i\},\forall i\leq n_0\ |\ X^*_0=Q)>\varepsilon$$
	where $\varepsilon$ is a positive constant independent from $Q$ and $n_0$.
	
	$\bar{G}$ is thus positive recurrent, and the process $X^*$ is absorbed in finite time almost surely.
	\end{proof}

After a last technical lemma, we will finally be able to prove Theorem \ref{thm}:
	
	\begin{Lem}\label{integrabilité}
	If $X^*$ is a stratified jump process with generator $\L^*$, such that $\E\left(\dfrac{1}{\mu(X^*_0)}\right)<+\infty$, then $\E\left(\dfrac{1}{\mu(X^*_t)}\right)<+\infty$ for all $t>0$.
	\end{Lem}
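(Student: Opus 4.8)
The plan is to establish a Lyapunov-type inequality $\L^*g\le c\,g$ for the function $g(Q):=1/\mu(Q)$ (with $\mu(Q)=\sum_{p\in Q\cap G}\mu(p)$) and a finite constant $c$, and then to run a Gronwall argument on $t\mapsto\E[g(X^*_t)]$. The heart of the matter is the generator computation. Writing $m=\mu(Q)$ and $\L^*g(Q)=\sum_{Q'\ne Q}\L^*_{Q,Q'}(g(Q')-g(Q))$, an addition transition $Q\to Q\cup\{p\}$ contributes $-\frac{\mu(p)}{m^2}\sum_{q\in Q}\L_{p,q}$, while a removal transition $Q\to Q'\in\C(Q\setminus\{p\})$ contributes $\frac{1}{m^2}\big(\sum_{q\notin Q}\L_{p,q}\big)(m-\mu(Q'))$. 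Summing the removals over the components of $Q\setminus\{p\}$, whose masses total $m-\mu(p)$, replaces the factor $m-\mu(Q')$ by $(|\C(Q\setminus\{p\})|-1)m+\mu(p)$. Using reversibility (\ref{eq-reversible}) in the form $\mu(p)\L_{p,q}=\mu(q)\L_{q,p}$ and exchanging the boundary double sum $\sum_{p\notin Q}\sum_{q\in Q,\,q\sim p}=\sum_{q\in Q}\sum_{p\notin Q,\,p\sim q}$, the addition contributions cancel exactly the ``$\mu(p)$'' part of the removal contributions, leaving
\[ \L^*g(Q)=g(Q)\sum_{p\in\mathrm V(Q^c)}\big(|\C(Q\setminus\{p\})|-1\big)\sum_{q\notin Q}\L_{p,q}\ \ge\ 0. \]
Only boundary points $p$ whose removal disconnects $Q$ contribute; but a degree-two vertex of a branch, removed from the boundary of a connected set, never disconnects it, so each such $p$ lies in the finite center $C_G$. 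Hence the sum is dominated by $c:=\sum_{p\in C_G}(\deg(p)-1)\L_p<\infty$, which gives $\L^*g\le c\,g$.

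Since $g$ is unbounded, I would truncate by setting $g_n=\psi_n\circ g$ with $\psi_n(x)=\min(x,n)$, which is concave, nondecreasing, $1$-Lipschitz and satisfies $\psi_n(0)=0$. From $\L^*_{Q,Q'}\ge0$, the concavity bound $g_n(Q')-g_n(Q)\le\psi_n'(g(Q))(g(Q')-g(Q))$ and the elementary inequality $\psi_n'(x)\,x\le\psi_n(x)$, one obtains $\L^*g_n\le\psi_n'(g)\,\L^*g\le c\,g_n$ with $g_n$ bounded. I would also record that $Q\mapsto\mu(Q)$ is continuous on $G^*$ for the Hausdorff distance—mass cannot concentrate at a point $\Delta_i$ in the limit, because $\sum_n\mu^i(n)<\infty$ and by Lemma \ref{voisinages} a close neighbour of $Q$ coincides with $Q$ away from the $\Delta_i$. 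Consequently $g_n(X^*_\cdot)=\psi_n(1/\mu(X^*_\cdot))$ has \emph{no} jump at the explosion times of the stratified process and varies only at the jumps of the underlying minimal pieces.

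For the final estimate I would localize at $\theta_R:=\inf\{t:\mu(X^*_t)<1/R\}$. On $[0,\theta_R]$ one has $g=g_R$ bounded by $R$ and, since $0\le\L^*g\le c\,g$, also $\L^*g$ bounded; combined with the continuity of $g\circ X^*$ across explosions, Dynkin's formula gives that $g(X^*_{t\wedge\theta_R})-g(X^*_0)-\int_0^{t\wedge\theta_R}\L^*g(X^*_s)\,\mathrm ds$ is a martingale. With $\L^*g\le c\,g$ and Gronwall's lemma this yields $\E[g(X^*_{t\wedge\theta_R})]\le e^{ct}\,\E[g(X^*_0)]$, uniformly in $R$. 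Markov's inequality then gives $R\,\P(\theta_R\le t)\le e^{ct}\E[g(X^*_0)]$, so $\theta_R\to\infty$ almost surely; letting $R\to\infty$ and applying Fatou's lemma produces $\E[1/\mu(X^*_t)]\le e^{ct}\,\E[1/\mu(X^*_0)]<\infty$ (and incidentally that $X^*_t\notin\{\{\Delta_i\}:i\in I\}$ a.s.).

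The main obstacle is precisely the measure-theoretic bookkeeping of the last step: the generator image $\L^*g$ is not globally bounded, so Proposition \ref{martingale} cannot be invoked directly, and the stratified process must be followed through its (finitely many, before time $t$) explosion times. Both difficulties dissolve under the localization at $\theta_R$—on which $g$ and $\L^*g$ are both bounded—together with the continuity of $g\circ X^*$ at explosions established above. The genuinely substantive point, by contrast, is the reversibility-driven cancellation of the first step, which confines the entire drift of $g$ to the finite center $C_G$ and thereby makes a linear Lyapunov bound available.
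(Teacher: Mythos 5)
Your proposal is correct and follows essentially the same route as the paper: the identical reversibility-driven cancellation reducing $\L^*\frac{1}{\mu}(Q)$ to $\frac{1}{\mu(Q)}\sum_{p\in Q}\left(|\C(Q\setminus\{p\})|-1\right)\sum_{q\notin Q}\L_{p,q}$, supported on the finite center $C_G$, followed by truncation, localization by a stopping time, the martingale property from Proposition \ref{martingale}, and a passage to the limit (the paper stops at $\tau_n=\inf\{t:X^*_t\subset\bigcup_{i}\varphi_i(\dbc{n,\infty})\}$ rather than at your mass-based $\theta_R$, and exploits that every contributing state meets $C_G$, so $\mu(Q)$ is bounded below on the support of the drift and $\L^*\frac{1}{\mu}\leq K$ for a constant $K$, yielding the additive bound $\E\left(\frac{1}{\mu(X^*_t)}\right)\leq\E\left(\frac{1}{\mu(X^*_0)}\right)+Kt$ without Gronwall). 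One cosmetic slip: your displayed nonnegativity fails at singleton states $\{p\}$, where the expression equals $-\L_p/\mu(p)$, but since only the upper bound $\L^*g\leq c\,g$ is ever used, nothing in your argument breaks.
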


	\begin{proof}
	Let $A_n:=\underset{i\in I}{\bigcup}\varphi_i(\dbc{n,\infty})$ and
		$$\tau_n:=\inf\{t>0:\;X^*_t\subset A_n\}$$
	Observe that the sequence $(A_n)_{n\in\N}$ is decreasing and goes to $A_{\infty}=\{\Delta_i,i\in I\}$, whereas the sequence $(\tau_n)_{n\in\N}$ is increasing (starting from the first rank $n$ such that $\tau_n\neq0$) and goes to infinity (since the set $A_{\infty}$ cannot be reached in finite time). In particular, for all $t>0$:
		$$\P(X^*_{t\wedge\tau_n}\nsubseteq A_{n+1}\ |\ X^*_0\nsubseteq A_{n+1})=1$$
	In addition, if $X^*_0\nsubseteq A_{n+1}$, then for every $n\in\N$, we get:
		$$\dfrac{1}{\mu(X^*_{t\wedge\tau_n})}\leq\underset{q\in G\setminus A_{n+1}}{\sup}\dfrac{1}{\mu(q)}=:K_n$$
	We then set, for all $Q\in G^*$ and all $n\in\N$:
		$$g_n(Q)=\dfrac{1}{\mu(Q)}\wedge K_n$$
	These functions are continuous and bounded and for every $n$, $g_n$ coincides with $\dfrac{1}{\mu}$ on $\{Q\in G^*:\; Q\nsubseteq A_{n+1}\}$.
	Using the fact that 
		$$Q\nsubseteq A_n \Rightarrow Q'\nsubseteq A_{n+1},\;\forall Q'\in G^* \text{ such that }\L^*_{Q,Q'}>0$$
	we get, for every $n\in\N$, for all $Q\nsubseteq A_n$:
		\begin{align*}
		\L^*\dfrac{1}{\mu}(Q)&=\L^*g_{n}(Q)\\
		&=\sum_{Q'\neq Q}\L^*_{Q,Q'}\left(\dfrac{1}{\mu(Q')}-\dfrac{1}{\mu(Q)}\right)\\
		&=\sum_{q\notin Q}\left(\dfrac{\mu(Q\cup\{q\})}{\mu(Q)}\left(\dfrac{1}{\mu(Q\cup\{q\})}-\dfrac{1}{\mu(Q)}\right)\sum_{p\in Q}\L_{q,p}\right)\\
		&\quad+\sum_{p\in Q}\sum_{Q'\in\mathscr{C}(Q\setminus\{p\})}\left(\dfrac{\mu(Q')}{\mu(Q)}\left(\dfrac{1}{\mu(Q')}-\dfrac{1}{\mu(Q)}\right)\sum_{q\notin Q}\L_{p,q}\right)\\
		&=\sum_{q\notin Q}\dfrac{-\mu(q)}{\mu(Q)^2}\sum_{p\in Q}\L_{q,p}+\sum_{p\in Q}\sum_{Q'\in\mathscr{C}(Q\setminus\{p\})}\dfrac{\mu(Q)-\mu(Q')}{\mu(Q)^2}\sum_{q\notin Q}\L_{p,q}\\
		&=\sum_{p\in Q}\dfrac{|\mathscr{C}(Q\setminus\{p\})|-1}{\mu(Q)}\sum_{q\notin Q}\L_{p,q}
		\end{align*}
	The terms of this sum are null if $p\notin C_G$, and we deduce easily that this sum is upper bounded by a constant $K$ which does not depend on $n$. By Proposition \ref{martingale}, for every $n\in\N$ such that $X^*_0\nsubseteq A_{n+1}$ almost surely, the process:
		
		\begin{align*}
		M_t:=&\dfrac{1}{\mu(X^*_{t\wedge\tau_n})}-\dfrac{1}{\mu(X^*_0)}-\int_{0}^{t\wedge\tau_n}\L^*\dfrac{1}{\mu}(X_s^*)\mathrm{d}s\\
		=&g_{n}(X^*_{t\wedge\tau_n})-g_{n}(X^*_0)-\int_{0}^{t\wedge\tau_n}\L^*g_{n}(X_s^*)\mathrm{d}s
		\end{align*}
	
	\noindent is a martingale, hence:
		
		\begin{align*}
		\E\left(\dfrac{1}{\mu(X^*_{t\wedge\tau_n})}\right)&=\E\left(\dfrac{1}{\mu(X^*_0)}\right)+\E\left(\int_0^{t\wedge\tau_n}\L^*\dfrac{1}{\mu}(X^*_s)\mathrm{d}s\right)\\
		&\leq\E\left(\dfrac{1}{\mu(X^*_0)}\right)+K\E(t\wedge\tau_n)\\
		&\leq\E\left(\dfrac{1}{\mu(X^*_0)}\right)+Kt
		\end{align*}
		
	We conclude by dominated convergence that $\E\left(\dfrac{1}{\mu(X^*_{t})}\right)\leq \E\left(\dfrac{1}{\mu(X^*_0)}\right)+Kt$.
	\end{proof}

	\begin{proof}[Proof of Theorem \ref{thm}]
	If condition (\ref{borne-infini}) holds for every $i\in I$, then in a classical way we construct a strong stationary time from the explosion time of a strong stationary dual $X^*$ with generator $\L^*$, and it is finite.
	
	Conversely, let us assume that there exists $i\in I$ such that this condition does not hold. Let $\mu_0$ be with finite support. Let $X^*$ be a strong stationary dual of $X$, with generator $\L^*$. We use the separation function $\mathfrak{s}$, and the dominated convergence theorem together with Lemma \ref{integrabilité}:
	
		\begin{align*}
		\mathfrak{s}(t)&=\underset{q\in G}{\sup}\left(1-\dfrac{\mu_t(q)}{\mu(q)}\right)\\
		&=\underset{q\in G}{\sup}\ \E\left(1-\dfrac{\Lambda(X^*_t,q)}{\mu(q)}\right)\\
		&=1-\underset{q\in G}{\inf}\ \E\left(\dfrac{\delta_q(X^*_t)}{\mu(X^*_t)}\right)\\
		&\geq 1-\underset{q\rightarrow\infty}{\lim}\ \E\left(\dfrac{\delta_{\varphi_i(q)}(X^*_t)}{\mu(X^*_t)}\right)\\
		&=1-\ \E\left(\underset{q\rightarrow\infty}{\lim}\ \dfrac{\delta_{\varphi_i(q)}(X^*_t)}{\mu(X^*_t)}\right)\\
		&=1
		\end{align*}
	
	Now, we know that if $T$ is a strong stationary time of $X$, then for all $t>0,\; \P(T>t)\geq\mathfrak{s}(t)$. So $T=\infty$ almost surely.
	\end{proof}

\textbf{Acknowledgements}:
I thank my Ph.D advisor L. Miclo for introducing this problem to me and for fruitful discussions, and Pan Zhao for pointing out some imprecisions in the previous version.

\bibliographystyle{plain} 
\bibliography{biblio}

\begin{thebibliography}{10}

\bibitem{aldous-diaco}
David Aldous and Persi Diaconis.
\newblock Strong uniform times and finite random walks.
\newblock {\em Adv. in Appl. Math.}, 8(1):69--97, 1987.

\bibitem{anderson}
William~J. Anderson.
\newblock {\em Continuous-time {M}arkov chains}.
\newblock Springer Series in Statistics: Probability and its Applications.
  Springer-Verlag, New York, 1991.
\newblock An applications-oriented approach.

\bibitem{Diaco}
Persi Diaconis and James~Allen Fill.
\newblock Strong stationary times via a new form of duality.
\newblock {\em Ann. Probab.}, 18(4):1483--1522, 1990.

\bibitem{Saloff-Coste}
Persi Diaconis and Laurent Saloff-Coste.
\newblock Separation cut-offs for birth and death chains.
\newblock {\em Ann. Appl. Probab.}, 16(4):2098--2122, 2006.

\bibitem{Lyzinski}
J.~A. {Fill} and V.~{Lyzinski}.
\newblock {Strong Stationary Duality for Diffusion Processes}.
\newblock {\em ArXiv e-prints}, February 2014.

\bibitem{fill-tts}
James~Allen Fill.
\newblock Time to stationarity for a continuous-time {M}arkov chain.
\newblock {\em Probab. Engrg. Inform. Sci.}, 5(1):61--76, 1991.

\bibitem{fill}
James~Allen Fill.
\newblock Strong stationary duality for continuous-time {M}arkov chains. {I}.
  {T}heory.
\newblock {\em J. Theoret. Probab.}, 5(1):45--70, 1992.

\bibitem{Fill-sampling}
James~Allen Fill.
\newblock An interruptible algorithm for perfect sampling via {M}arkov chains.
\newblock {\em Ann. Appl. Probab.}, 8(1):131--162, 1998.

\bibitem{Kahn}
James~Allen Fill and Jonas Kahn.
\newblock Comparison inequalities and fastest-mixing {M}arkov chains.
\newblock {\em Ann. Appl. Probab.}, 23(5):1778--1816, 2013.

\bibitem{Mao}
Yu~Gong, Yong-Hua Mao, and Chi Zhang.
\newblock Hitting time distributions for denumerable birth and death processes.
\newblock {\em J. Theoret. Probab.}, 25(4):950--980, 2012.

\bibitem{Lorek}
Pawel Lorek and Ryszard Szekli.
\newblock {Strong stationary duality for Möbius monotone Markov chains}.
\newblock {\em Queueing Syst.}, 71(1-2):79--95, 2012.

\bibitem{Miclo}
L.~{Miclo}.
\newblock {Strong stationary times for one-dimensional diffusions}, November
  2013.

\bibitem{Miclo-resolvent}
Laurent Miclo.
\newblock On ergodic diffusions on continuous graphs whose centered resolvent
  admits a trace.
\newblock {\em J. Math. Anal. Appl.}, 437(2):737--753, 2016.

\bibitem{norris}
J.~R. Norris.
\newblock {\em Markov chains}, volume~2 of {\em Cambridge Series in Statistical
  and Probabilistic Mathematics}.
\newblock Cambridge University Press, Cambridge, 1998.
\newblock Reprint of 1997 original.

\end{thebibliography}

\end{document}